\newtheorem{theorem}{Theorem}[section]
\newtheorem*{nonbtheorem}{Theorem}
\newtheorem{lemma}[theorem]{Lemma}
\newtheorem{proposition}[theorem]{Proposition}
\newtheorem{corollary}[theorem]{Corollary}
\theoremstyle{definition}
\theoremstyle{remark}
\newtheorem{remark}[theorem]{Remark}
\newcommand{\Mod}{\text{\rm mod-}}
\newcommand{\Hom}{\operatorname{Hom}}
\newcommand{\Ext}{\operatorname{Ext}}
\newcommand{\End}{\operatorname{End}}
\newcommand{\Ker}{\operatorname{Ker}}
\newcommand{\Ima}{\operatorname{Im}}
\newcommand{\rad}{\operatorname{rad}}
\newcommand{\Sing}{\operatorname{Sing}}
\newcommand{\Irr}{\operatorname{Irr}}
\newcommand{\corank}{\operatorname{corank}}
\title{Ringel-Hall numbers and the Gabriel-Roiter submodules of simple homogeneous regular modules}
\author{Csaba Sz\'ant\'o, Istv\'an Sz\"oll\H osi}
\begin{document}
\maketitle \pagestyle{myheadings} \markboth{\sc Csaba
Sz\'ant\'o, Istv\'an Sz\"oll\H osi}{\sc Ringel-Hall numbers and the Gabriel-Roiter submodules of simple homogeneous regular modules}

{\bf Abstract.} Let $k$ be an arbitrary field and $Q$ an acyclic quiver of tame type (i.e. of type $\tilde
A_n,\tilde D_n,\tilde E_6, \tilde E_7,\tilde E_8$). Consider the path algebra $kQ$, the category of finite dimensional right modules $\Mod kQ$ and denote by $\delta$ the minimal radical vector of $Q$. In the first part of the paper we deduce that the Gabriel-Roiter inclusions in preprojective indecomposables and simple homogeneous regulars of dimension $\delta$ as well as their Gabriel-Roiter measures are field independent. Using this result we can prove in a more general setting a theorem by Bo Chen which states that the GR submodule of a simple homogeneous regular of dimension $\delta$ is a preprojective of defect $-1$. The generalization consists in considering the originally missing case $\tilde E_8$ and using arbitrary fields (instead of algebraically closed ones). Our proof is based on the idea of Ringel (used in the Dynkin quiver context) of comparing all possible Ringel-Hall polynomials with the special form they take in case of a Gabriel-Roiter inclusion. For this purpose we compute (using a program written in GAP) a list of useful Ringel-Hall polynomials in tame cases.

{\bf Key words.}  Tame hereditary algebra, Gabriel-Roiter measure, Gabriel-Roiter inclusion, Ringel-Hall algebra,
Ringel-Hall polynomials.

\medskip

{\bf 2000 Mathematics Subject Classification.} 16G20.

\section{\bf Introduction}
Let $k$ be an arbitrary field and consider the path
algebra $kQ$ where $Q$ is a quiver of tame type i.e. of type $\tilde
A_n,\tilde D_n,\tilde E_6, \tilde E_7,\tilde E_8$. When $Q$ is of
type $\tilde A_n$ we exclude the cyclic orientation, so $kQ$ is a
finite dimensional tame hereditary algebra. We denote by $\Mod
kQ$ the category of finite dimensional (hence finite) right modules. Let $[M]$ be the isomorphism class of $M\in\Mod kQ$ and $tM=M\oplus\dots\oplus M$ ($t$-times).
The category $\Mod kQ$ can and will be
identified with the category $\text{\rm rep-} kQ$ of the finite
dimensional $k$-representations of the quiver
$Q=(Q_0=\{1,2,...,n\},Q_1)$. Here $Q_0=\{1,2,...,n\}$ denotes the
set of vertices of the quiver and $Q_1$ the set of arrows. For an
arrow $\alpha$ we denote by $s(\alpha)$ the starting point of the
arrow and by $e(\alpha)$ its endpoint. Recall that a
$k$-representation of $Q$ is defined as a set of finite dimensional
$k$-spaces $\{V_i|i=\overline{1,n}\}$ corresponding to the vertices
together with $k$-linear maps $V_{\alpha}:V_{s(\alpha)}\to
V_{e(\alpha)}$ corresponding to the arrows. The dimension of a
module $M=(V_i,V_{\alpha})\in\Mod kQ=\text{\rm rep-} kQ$ is $\underline\dim M=(\dim_kV_i)_{i=\overline{1,n}}\in\Bbb Z^n$. For
$x=(x_i),y=(y_i)\in\Bbb Z^n$ we say that $x\leq y$ iff $x_i\leq y_i$ $\forall i$.

Let $P(i)$ (resp. $I(i)$) the projective (resp. injective) indecomposable
corresponding to the vertex $i$ and consider the Cartan matrix $C_Q$
with the $j$-th column being $\underline\dim P(j)$. We have a
biliniar form on $\Bbb Z^n$ defined as $\langle
a,b\rangle=aC_Q^{-t}b^t$. Then for two modules $X,Y\in\Mod kQ$ we
get
$$\langle\underline\dim X,\underline\dim Y\rangle=\dim_k\Hom
(X,Y)-\dim_k\Ext^1(X,Y).$$ We denote by $\bf{q}$ the quadratic form
defined by ${\bf q}(a)=\langle a,a\rangle$. This form is called Euler quadratic form of $kQ$ and it depends only on the underlying graph of $Q$ (so it does not depend on $k$ and on the orientation of $Q$). It is also positive semi-definite with radical $\Bbb Z\delta$, i.e.
$\{x\in\Bbb Z^n|{\bf q}(x)=0\}=\Bbb Z\delta$. The vector $\delta$ is known
for each type $\tilde A_n,\tilde D_n,\tilde E_6, \tilde E_7,\tilde
E_8$ (see \cite{dlabrin}). A vector $x\in\Bbb N^n$ is called
positive real root of ${\bf q}$ if ${\bf q}(x)=1$. It is known (see
\cite{dlabrin}) that for all positive real roots $x$ there is a
unique (up to isomorphism) indecomposable module $M\in\Mod kQ$ with
$\underline\dim M=x$. The rest of the indecomposables are of
dimension $t\delta$ with $t$ positive integers. The defect of a
module $M$ is $\partial M=\langle\delta,\underline\dim
M\rangle=-\langle\underline\dim M,\delta\rangle$. For a short exact
sequence $0\to X\to Y\to Z\to 0$ we have that $\partial Y=\partial
X+\partial Z$. For $x\in\Bbb N^n$ we define its defect $\partial x=\langle\delta,x\rangle=-\langle x,\delta\rangle$.

Consider the Auslander-Reiten translates $\tau=D\Ext^1(-,kQ)$ and
$\tau^{-1}=\Ext^1(D(kQ),-)$, where $D=\Hom_k(-,k)$. An indecomposable module $M$ is preprojective (preinjective) if
there exists a positive integer $m$ such that $\tau^m(M)=0$
($\tau^{-m}(M)=0$). Otherwise $M$ is said to be regular. Note that
the dimension vectors of preprojective and preinjective
indecomposables are positive real roots of ${\bf q}$ which determine uniquely (up to isomorphism) these indecomposables.
A module is preprojective (preinjective, regular) if every indecomposable
component is preprojective (preinjective, regular). Note that an
indecomposable module $M$ is preprojective (preinjective, regular)
iff $\partial M<0$ ($\partial M>0$, $\partial M=0$). Moreover if $Q$
is of type $\tilde A_n$ then $\partial M=-1$ for $M$ preprojective
indecomposable and $\partial M=1$ for $M$ preinjective
indecomposable. For a positive root $x$ with $\partial x<0$ we will denote the corresponding unique preprojective indecomposable by $P(x,k)$.

The vertices of the Auslander-Reiten quiver associatiated to $kQ$ are the isomorphism
classes of indecomposables, while its arrows correspond to so called
irreducible maps. Among its components it will have a preprojective component (with all
the isoclasses of indecomposable preprojectives) and a preinjective
component (with all the isoclasses of indecomposable preinjectives).
All the other components (containing the isoclasses of
indecomposable regulars) are "tubes" of the form $\Bbb
ZA_{\infty}/m$, where $m$ is the rank of the tube. The tubes are
indexed by the points of the projective line $\Bbb P_k^1$, the degree of a
point $a\in\Bbb P^1_k$ being denoted by $\deg a$. A tube of rank 1
is called homogeneous, otherwise it is called non-homogeneous. We have
at most 3 non-homogeneous tubes indexed by points $a$ of degree
$\deg a=1$. All the other tubes are homogeneous. We assume
that the non-homogeneous tubes are labelled by some subset of $\{0,1,\infty\}$, whereas
the homogeneous tubes are labelled by the closed points of the scheme $\mathbb H_k=\mathbb H_{\mathbb Z}\otimes k$ for some
open integral subscheme $\mathbb H_{\mathbb Z}\subset \mathbb P_{\mathbb Z}^1$. A regular indecomposable taken from a homogeneous tube labelled by a point $a\in\mathbb H_k$ is called homogeneous regular. If its dimension is $(\deg a)\delta$, we call it simple homogeneous regular (it is placed on the mouth of the tube) and denote it by $R^k_{\deg a} (1,a)$. Note that in case $Q$ is not of type $\tilde A_n$ and $k$ is finite with $q$ elements the number of points $a\in\mathbb H_k$ with degree 1 is $q-2$, so in order to have homogeneous tubes, $k$ must have at least 3 elements.

We recall next the definition of the Gabriel-Roiter measure, Gabriel-Roiter submodule, Gabriel-Roiter inclusion and Gabriel-Roiter exact sequence (see \cite{rin2},\cite{rin4},\cite{rin5},\cite{chen1},\cite{chen2}).
For $M\in\Mod kQ$ we denote by $|M|$ the length of $M$. For a vector $x\in\mathbb N^n$ we have $|x|=\sum_{i=1}^nx_i$. Let $\mathcal{P}(\Bbb N^*)$ be the set of
all subsets of $\Bbb N^*$. A total order on $\mathcal{P}(\Bbb N^*)$ can
be defined as follows: if $I,J$ are two different subsets of $\Bbb
N$, write $I < J$ if the smallest element in $(I\setminus
J)\cup(J\setminus I)$ belongs to $J$. Using the total order above, for each $M\in\Mod kQ$ let
$\mu(M)$ be the maximum of the sets $\{|M_1|, |M_2|,..., |M_t|\}$,
where $M_1\subset M_2\subset...\subset M_t$ is a chain of
indecomposable submodules of $M$. We call $\mu(M)$ the
Gabriel-Roiter (GR for short) measure of $M$. If $M\in\Mod kQ$ is an
indecomposable module, an indecomposable submodule $X\subset M$ is called a GR submodule provided $\mu(M) = \mu(X)\cup\{|M|\}$,
thus if and only if every proper submodule of $M$ has GR measure at
most $\mu(X)$. A monomorphism $N\to M$ between two indecomposable modules is called GR inclusion if $\mu(M)=\mu(N)\cup\{|M|\}$ (i.e. $N$ is isomorphic with a GR submodule of $M$). It is known that the factor of a GR inclusion is indecomposable. If $N\subset M$
is a GR inclusion the exact sequence $0\to N\to M\to M/N\to 0$ will
be called a GR exact sequence.

Ringel proves in \cite{rin2} that in case $Q$ is a Dynkin quiver, the GR inclusions in
indecomposables and their GR measures are independent from the base field $k$.

We prove in this paper that in case $Q$ is tame without oriented cycles, the GR inclusions in preprojective indecomposables and simple homogeneous regulars of dimension $\delta$ as well as their GR measures are also field independent. More precisely we prove the following two theorems:

\begin{nonbtheorem} Consider two fields $k,k'$. Then for preprojective indecomposables we have $\mu(P(x,k))=\mu(P(x,k'))$, moreover we have a GR inclusion
$P(x,k)\to P(y,k)$ iff we have a GR inclusion $P(x,k')\to P(y,k')$.
\end{nonbtheorem}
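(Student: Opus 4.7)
The plan is to deduce the theorem from a field-independence property of the Ringel-Hall polynomials associated to triples of preprojective indecomposables.

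First I would reduce to submodule existence. In a tame hereditary algebra there are no non-zero morphisms from a regular or preinjective indecomposable to a preprojective one, so every submodule of a preprojective module is preprojective; in particular every indecomposable submodule of $P(y,k)$ has the form $P(x,k)$ for some positive real root $x$ with $\partial x<0$. Since $|P(y,k)|=|y|$ is manifestly field-independent, and the GR measure is built recursively from lengths of chains of indecomposable submodules, field-independence of $\mu(P(y,k))$ reduces, by induction on $|y|$, to the claim that the existence of a monomorphism $P(x,k)\hookrightarrow P(y,k)$ does not depend on $k$. The GR-inclusion statement then follows immediately: a monomorphism $P(x,k)\to P(y,k)$ is a GR inclusion precisely when $\mu(P(y,k))=\mu(P(x,k))\cup\{|y|\}$, which under the previous reduction becomes a field-independent condition.

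For the key existence claim I would use Ringel-Hall polynomials. Such a monomorphism exists iff there is a short exact sequence $0\to P(x,k)\to P(y,k)\to L\to 0$ for some $L$ of dimension $y-x$. When $k$ is finite, the number of such submodules with fixed quotient type $[L]$ is the Ringel-Hall number $F^{P(y,k)}_{P(x,k),L}$, which in the tame hereditary preprojective regime is the evaluation at $q=|k|$ of a universal polynomial $\varphi^{y}_{x,[L]}(t)\in\mathbb{Z}[t]$ depending only on $Q$ and on the dimension data (Ringel's Hall-polynomial phenomenon). For all but finitely many $q$ the non-vanishing of this polynomial matches the existence of such a submodule, and so existence is a uniform property across all sufficiently large finite fields. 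To extend to arbitrary fields (including infinite ones and small ones) I would invoke the rigidity of preprojective indecomposables: $\Hom$- and $\Ext^1$-dimensions between them are controlled by the Euler form and hence field-independent, there are no moduli, and the scheme parameterizing submodules of $P(y,k)$ of the prescribed isomorphism type arises by base change from a corresponding scheme defined over $\mathbb{Z}$; its non-emptiness is therefore independent of the base field.

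The main obstacle is the final transfer from finite-field Hall-polynomial data to arbitrary fields, especially to small fields and to fields of positive characteristic. Rigidity of preprojective modules in the tame hereditary case is what makes this transfer work, together with the fact, which one would have to verify directly for the polynomials in question, that the relevant Hall polynomials in this regime have no parasitic zeros at the prime-power arguments that correspond to small residue fields. Once these technical points are settled, the sets of isomorphism types of indecomposable submodules of $P(y,k)$ and $P(y,k')$ correspond under $P(x,k)\leftrightarrow P(x,k')$, and both assertions of the theorem follow.
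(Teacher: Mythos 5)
Your reduction to the field-independence of submodule existence is sound, and your use of Hall polynomials to compare large finite fields is essentially the paper's Lemma 4.4 (though note that in the tame case Hall polynomials exist only with respect to Hubery's decomposition classes, not with respect to the isomorphism class of the cokernel $L$: the cokernel may have homogeneous regular summands whose isomorphism type is not a field-independent datum, so one must sum over all decomposition symbols of dimension $y-x$). The genuine gap is exactly the step you flag as ``the main obstacle'' and then dispose of too quickly: the claim that because the submodule scheme has a $\mathbb{Z}$-form ``its non-emptiness is therefore independent of the base field'' is false for $\mathbb{Z}$-schemes in general --- a $\mathbb{Z}$-scheme can have $\mathbb{F}_q$-points for all large $q$ and still have none over small finite fields or over $\mathbb{Q}$ --- and an appeal to rigidity does not repair it; the paper explicitly remarks that Schofield-type generic arguments are unavailable in the tame setting. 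As written, your argument establishes the theorem only for sufficiently large finite fields of a fixed characteristic.

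The paper closes this gap with two tools absent from your proposal. First, a prime-field reduction (Lemma 4.1): if $P(x,k)\to P(y,k)$ is a GR inclusion, then $\Sing(P(x,k),P(y,k))$ is a proper $k$-subspace of $\Hom(P(x,k),P(y,k))$, and since every homomorphism between preprojective indecomposables is a linear combination of compositions of irreducible maps, some such composition must be injective; the irreducible maps can all be realized by matrices over the prime field $k_0$, yielding a monomorphism over $k_0$ and hence over every field of the same characteristic. This is what handles small fields and infinite fields. Second, to pass from characteristic $p$ to characteristic $0$, the paper combines Ringel's count $u^{P(y,k)}_{P(x,k)}=q^{s}(q^{h-s-1}+\cdots+q+1)$ with the Caldero--Chapoton lemma: the variety of submodules of $P(y)$ isomorphic to $P(x)$ has polynomial point count over infinitely many finite fields, so its complex Euler characteristic equals the value of that polynomial at $q=1$, namely $h-s>0$, forcing the variety to be non-empty over $\mathbb{C}$ and hence a monomorphism to exist over $\mathbb{Q}$. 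You would need these (or substitutes for them) to complete the transfer you describe.
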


\begin{nonbtheorem}Consider two fields $k,k'$ with at least 3 elements and $a\in\mathbb H_{k}$, $a'\in\mathbb H_{k}'$ points of degree 1. Then for simple homogeneous regulars of dimension $\delta$ we have  $\mu(R^k_1(1,a))=\mu(R^{k'}_1(1,a'))$ and we have a GR inclusion
$P(x,k)\to R^k_1(1,a)$ iff we have a GR inclusion $P(x,k')\to R^{k'}_1(1,a')$.
\end{nonbtheorem}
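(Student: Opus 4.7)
The argument proceeds in close analogy with the proof of Theorem~1, using the Ringel-Hall polynomial formalism for tame hereditary algebras. Fix $M := R^k_1(1,a)$. Since $M$ sits on the mouth of a homogeneous tube of rank one, it has no proper regular submodule: indecomposables in the same tube have dimension $\geq\delta=\underline\dim M$, and indecomposables in any other tube admit no nonzero map to $M$. Because $\Hom(X,R)=0$ for every preinjective $X$ and every regular $R$, no preinjective embeds into $M$ either. Consequently every indecomposable proper submodule of $M$ is preprojective, hence of the form $P(y,k)$ for a positive real root $y$ with $\partial y<0$ and $y<\delta$. Let $\mathcal S$ denote this finite set of candidate roots.

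By Theorem~1 the values $\mu(P(y,k))$ for $y\in\mathcal S$ together with their total ordering are field-independent. Hence determining the isomorphism type of the GR submodule of $M$ reduces to singling out, independently of $k$, the root $y_0\in\mathcal S$ which is largest in the GR order among those $y$ for which $P(y,k)$ embeds into $M$ with indecomposable cokernel. Over a finite field $k=\mathbb F_q$ with $q\geq 3$ such embeddings are enumerated by the Ringel-Hall polynomial $\varphi^{M}_{P(y,k),L}(q)$, which for tame hereditary algebras is a genuine polynomial in $q$ once the Segre type of each argument is fixed, and which in the present setup depends only on $y$, on the isomorphism type of $L$ and on the quiver, not on the chosen degree-one point $a$ (any two such points are exchanged by an automorphism of the relevant Grothendieck-type counting problem). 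The plan is then to tabulate these polynomials through the GAP computation announced in the abstract and to compare them against the rigid form the Hall polynomial must take when the underlying sequence is a GR sequence (indecomposability of the cokernel combined with maximality of $\mu(P(y_0,k))$, following the Dynkin template of Ringel). This comparison pins down a unique pair $(y_0,L_0)$ intrinsic to $Q$, which then must be the GR datum for $M$ over every sufficiently large finite field.

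Extending the conclusion to arbitrary $k$ with $|k|\geq 3$ is routine: the polynomiality of the Hall numbers in $q$, together with the base-change invariance of $P(y,k)$, $R^k_1(1,a)$ and of Hom/Ext dimensions, promotes the finite-field verification to every field for which the simple homogeneous regulars of dimension $\delta$ exist, i.e.\ every $k$ with at least three elements in the non-$\tilde A_n$ cases. Finally $\mu(R^k_1(1,a))=\mu(P(y_0,k))\cup\{|\delta|\}$, and the right-hand side is field-independent by Theorem~1. The principal obstacle is the size of the required case analysis: each tame type $\tilde A_n,\tilde D_n,\tilde E_6,\tilde E_7,\tilde E_8$ gives its own set $\mathcal S$ and its own table of Hall polynomials, and the previously untreated type $\tilde E_8$ is by far the heaviest. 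This is precisely why the custom GAP implementation alluded to in the abstract is needed.
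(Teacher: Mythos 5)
Your proposal is a plan rather than a proof, and the places where you wave your hands are exactly where the real difficulty sits. First, the passage from ``the Hall polynomial counting embeddings $P(y,k)\to M$ is nonzero'' to ``$P(y,k)$ embeds in $M$ over every admissible field'' does not follow from polynomiality: a nonzero polynomial can vanish at small prime powers (the paper's own computed polynomials, e.g.\ $f_2=q-3$, vanish at the admissible value $q=3$), and Hall numbers are not even defined over infinite fields, so the claim that ``the polynomiality of the Hall numbers \dots promotes the finite-field verification to every field'' is not routine --- as stated it is false. The paper has to work hard for this kind of transfer even in the preprojective case (the whole chain from Lemma 4.1 to Corollary 4.11, including an Euler--Poincar\'e characteristic argument over $\mathbb C$), and for the regular case it sidesteps the issue entirely. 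Second, you never specify the mechanism by which the ``comparison pins down a unique pair $(y_0,L_0)$''; deciding which roots $y<\delta$ actually admit an embedding $P(y,k)\to M$ over a \emph{given} $k$ is precisely the question being asked, and tabulating all polynomials $F^M_{L\,P(y)}$ over all tame types (a vastly larger computation than the six polynomials $f_1,\dots,f_6$ the paper computes after a reflection-functor reduction, and which are used only for the defect theorem in Section 7, not for this statement) would still leave the small-field and infinite-field cases open.

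The paper's actual argument is short and avoids Hall polynomials for this theorem. For the equality of measures it combines Chen's result that $\mu(P)<\mu(R^k_1(1,a))$ for \emph{every} preprojective indecomposable $P$ with the field independence of preprojective measures (your Theorem 1), deriving a contradiction from the assumption $\mu(R^k_1(1,a))<\mu(R^{k'}_1(1,a'))$ because all lengths occurring in the measures of the two GR submodules are below $|\delta|$. For the transfer of GR inclusions it uses Chen's cogeneration lemma --- every preprojective indecomposable $P$ with $|P|<|\delta|$ admits a monomorphism into $tR^k_1(1,a)$ for some $t$ --- and then the Gabriel--Ringel main property (Theorem 3.2): since $\mu(R^k_1(1,a))$ starts with $\mu(P(x,k))$, some component projection of the monomorphism $P(x,k)\to tR^k_1(1,a)$ is injective, yielding a monomorphism $P(x,k)\to R^k_1(1,a)$, which is a GR inclusion because the measures already match. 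You would need an ingredient of this kind (or the specialization machinery of Section 4) to close the gaps in your plan; the Hall-polynomial route you sketch is closer to the paper's Proposition 5.4, which itself only treats finite fields and relies on Ringel's rigid form $\frac{q^{h}-q^{s}}{q-1}$ of the submodule count to rule out vanishing at small $q$.
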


In order to prove these theorems we use Ringel's ideas from \cite{rin2}, Bo Chen's results from \cite{chen2} but we also introduce some new concepts: Ringel-Hall polynomials in tame cases (see \cite{hubery}) and results from algebraic geometry.

As an application of the theorems above one can prove a result by Bo Chen in \cite{chen1} in a more general context: our result is valid also for the case $\tilde E_8$ (this case is missing from \cite{chen1}) and it is field independent (in \cite{chen1} $k$ is algebraically closed).

\begin{nonbtheorem} Let $Q$ be not of type $\tilde A_n$ and $k$ a field having at least 3 elements. Consider a simple homogeneous regular module $R$ of dimension $\delta$.
If there is a GR inclusion $P\to R$ then $\partial P=-1$. (In case $Q$ is of type $\tilde A_n$ then every preprojective indecomposable has defect -1).
As a consequence for every tame quiver $Q$, the pair $(R/P,P)$ is a Kronecker pair.
\end{nonbtheorem}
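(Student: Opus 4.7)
The plan is to reduce to a convenient finite ground field by invoking the second theorem stated above, and then to combine this with the Ringel–Hall polynomial machinery on which the paper is built. First, given a GR inclusion $P\to R$ over our field $k$ with $|k|\geq 3$, Theorem~2 asserts that for any other field $k'$ with $|k'|\geq 3$ and any degree~$1$ point $a'\in\mathbb H_{k'}$, the inclusion $P(x,k')\to R^{k'}_1(1,a')$ is also a GR inclusion (where $x=\underline\dim P$). Since the defect $\partial x$ of a root does not depend on the field, it suffices to establish $\partial x=-1$ after replacing $k$ by an algebraically closed field (for the types $\tilde D_n,\tilde E_6,\tilde E_7$) or by a sufficiently large finite field $\mathbb F_q$ (for $\tilde E_8$). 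For $\tilde D_n,\tilde E_6,\tilde E_7$ the statement is then immediate from Bo Chen's theorem in \cite{chen1}, so the real work is in the single remaining type $\tilde E_8$.

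For $\tilde E_8$ over $k=\mathbb F_q$ with $q\geq 3$, I would start from the short exact sequence $0\to P\to R\to R/P\to 0$ produced by the GR inclusion. Since $\partial R=0$ and factors of GR inclusions are indecomposable, $R/P$ is indecomposable with $\partial(R/P)=-\partial P>0$, so $R/P$ is preinjective, determined up to isomorphism by its dimension vector $\delta-x$. The Ringel–Hall polynomial
$$\varphi^{\,R}_{R/P,\,P}(q)=F^{R}_{R/P,\,P}$$
counts the number of submodules of $R$ isomorphic to $P$ whose cokernel is isomorphic to $R/P$; this polynomial is field-independent (see \cite{hubery}) and, crucially, is nonzero exactly when such an embedding exists. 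The GR hypothesis then says two things: (i) for the actual pair $(x,\delta-x)$ this polynomial is positive on $\mathbb F_q$; (ii) for every positive real root $y<\delta$ with $\mu(P(y,k))>\mu(P(x,k))$ and every preinjective indecomposable $L$ with $\underline\dim L=\delta-y$, one must have $\varphi^{\,R}_{L,P(y,k)}(q)=0$.

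The argument now becomes a finite check. Using Theorem~1 (field-independence of the GR measure for preprojective indecomposables) one tabulates once and for all, in type $\tilde E_8$, the GR measures $\mu(P(y,k))$ of the finitely many preprojective indecomposables with $\underline\dim\leq\delta$. Suppose for contradiction that the GR submodule $P=P(x,k)$ of $R$ has $\partial x\leq -2$. Among the roots $y<\delta$ of defect $-1$ (there are many such in $\tilde E_8$) I would pick one with $\mu(P(y,k))>\mu(P(x,k))$ and verify, by running through the GAP-computed list of Ringel–Hall polynomials, that $\varphi^{\,R}_{L,P(y,k)}(q)>0$ for an appropriate preinjective $L$ of dimension $\delta-y$; this contradicts condition (ii) above and forces $\partial x=-1$. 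The main obstacle is precisely this case analysis: the list of candidate pairs $(y,L)$ in $\tilde E_8$ is large enough that it would be impractical to do by hand, which explains why the authors commit to a GAP implementation rather than a case-by-case argument in the style of Chen.

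Finally, for the consequence, once $\partial P=-1$ is known we have a short exact sequence $0\to P\to R\to R/P\to 0$ with $R$ simple homogeneous regular of dimension $\delta$, $P$ preprojective indecomposable of defect $-1$, and $R/P$ preinjective indecomposable of defect $+1$. Such a triple is the defining configuration of a Kronecker pair, so the corollary follows once the defect statement is secured; in the $\tilde A_n$ case the parenthetical remark that every preprojective indecomposable already has defect $-1$ disposes of the hypothesis for free, yielding the Kronecker pair conclusion uniformly across all tame types.
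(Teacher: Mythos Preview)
Your reduction via Theorem~5.5 to an arbitrary ground field is correct and matches the paper, and your treatment of the $\tilde A_n$ case and of the Kronecker-pair corollary is fine. However, the $\tilde E_8$ argument has a genuine gap, and even for the other types you bypass the point of the paper by quoting Chen.

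The gap is this. You assume by contradiction that the GR submodule $P=P(x,k)$ has $\partial x\le -2$, and then you want to \emph{pick} a defect $-1$ root $y\le\delta$ with $\mu(P(y,k))>\mu(P(x,k))$ and show that $P(y,k)$ embeds in $R$, contradicting maximality. But under your contradiction hypothesis, $P(x,k)$ has the \emph{largest} GR measure among all proper indecomposable submodules of $R$; since $f_1=1$ shows that every defect $-1$ preprojective of dimension $\le\delta$ does embed in $R$, your hypothesis already forces $\mu(P(y,k))\le\mu(P(x,k))$ for all such $y$. So you cannot simply ``pick'' such a $y$; you would have to prove its existence by an \emph{independent} computation of the GR measures of all preprojective indecomposables with dimension $\le\delta$ in type $\tilde E_8$ and then compare. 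That is a large finite check (essentially Chen's strategy, which he did not carry out for $\tilde E_8$), and you neither perform it nor indicate how to bound it.

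The paper avoids this entirely by using the idea you never invoke: by Proposition~3.3 (Ringel) and Proposition~5.4, a GR inclusion $P(x,k)\to R$ forces the Hall polynomial to satisfy
\[
F^{(\emptyset,((1),1))}_{(\delta-x,\emptyset)(x,\emptyset)}(q)=\frac{q^{-\partial x}-q^{s}}{q-1}
\]
for some $0\le s<-\partial x$. Independently, Proposition~6.5 and Corollary~6.8 reduce this polynomial to one of the six explicit polynomials $f_1,\dots,f_6$ of Proposition~6.6. A glance at $f_2=q-3$, $f_3=q^2-5q+7$, etc., shows that none of $f_2,\dots,f_6$ is of the form $(q^m-q^s)/(q-1)$, while $f_1=1$ is. Hence $-\partial x=1$. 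This argument is uniform across $\tilde D_n,\tilde E_6,\tilde E_7,\tilde E_8$, involves only six polynomials, and does not require computing any GR measures; your approach trades this structural constraint on GR Hall polynomials for an unbounded case analysis that you do not complete.
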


The proof of the theorem above follows the idea of Ringel from \cite{rin2}: one compares all possible Ringel-Hall polynomials with the special form they take in case of a GR inclusion. The problem is that although we know these polynomials in the Dynkin case (see \cite{rin6}), in the tame case little is known in this sense.  So we have to determine in the tame cases a list of Ringel-Hall polynomials corresponding to exact sequences of the form $0\to P\to R\to I\to 0$, where $P$ is a preprojective, $I$ a preinjective indecomposable and $R$ is a simple homogeneous regular of dimension $\delta$. Using reflection functors we can reduce the problem to the case when $P$ is simple and $Q$ has a unique sink. In this case the polynomials above are computed with the help of a computer program written in GAP (see \cite{GAP}). The computation is very quick, it takes about 15 minutes. The list of these special Ringel-Hall polynomials may have other interesting applications in representation theory.

\section{\bf Facts on tame hereditary algebras and associated Ringel-Hall algebras}
For a detailed description of the forthcoming notions we refer to
\cite{assem},\cite{aus},\cite{dlabrin},\cite{rin1}. First of all we fix some notations and conventions.

Consider a representation $M$ of dimension $x$ such that its linear application $k^{x_{s(\alpha)}}\to k^{x_{e(\alpha)}}$ corresponding to the arrow $\alpha$ is given by the matrix $A_{\alpha}$ (in the canonical base). An endomorphism of this representation using the canonical bases can be identified with a collection $(X_i)_{i\in Q_0}$ of square matrices $X_i$ of dimension $x_i$ which satisfy the relations $A_{\alpha}X_{s(\alpha)}=X_{e(\alpha)}A_{\alpha}$. These relations induce a homogeneous linear system of equations with the unknowns being the elements of $X_i$. Denote by $A_M$ the matrix of this system. Trivially we have $\dim_k\End(M)=\corank A_M$.

Let $k$ be a field with prime field $k_0$, $x$ a positive real root such that $\partial x<0$ and $P(x,k)$ the (unique up to isomorphism) preprojective indecomposable representation over $k$ with dimension $x$. In case we use the representation $P(x,k_0)$ over the prime field $k_0$ of $k$, we agree that the representation
$P(x,k)$ is constructed in the following way: for $\alpha\in Q_1$ consider the linear application $k^{x_{s(\alpha)}}\to k^{x_{e(\alpha)}}$ having the same matrix in the canonical base as the linear application $k_0^{x_{s(\alpha)}}\to k_0^{x_{e(\alpha)}}$ in the representation $P(x,k_0)$. Note that $P(x,k)$ is an indecomposable representation over $k$ of dimension $x$ (since $A_{P(x,k)}=A_{P(x,k_0)}$, so $\dim_k\End(P(x,k))=\corank A_{P(x,k)}=\corank A_{P(x,k_0)}=1=\dim_k\End(P(x,k_0))$). A second convention is that if we use the rational representation $P(x,\mathbb Q)$ with rational matrices corresponding to the arrows (in the canonical base) $A_\alpha$, then for a big enough prime $p$ the representation $P(x,\mathbb F_p)$ has matrices $A_{\alpha}\mod p$. Note also that this representation $P(x,\mathbb F_p)$ is indecomposable for $p$ big enough.

We will describe now the so called decomposition symbol used by Hubery in \cite{hubery}.

We know that the isomorphism class of a module in $\Mod kQ$ without homogeneous regular summands can be described
combinatorially, whereas homogeneous regular modules are placed on homogeneous tubes labelled by the closed points of the scheme $\mathbb H_k$. Since indecomposables from different tubes have no nonzero homomorphisms
and no non-trivial extensions and any indecomposable regular module is regular uniserial, we have that the homogeneous regular indecomposables on the tube labelled by $a\in\mathbb H_k$ with $\deg a=d$ are $R^k_d(1,a)\subset R^k_d(2,a)\subset R^k_d(3,a)\subset\dots$ all the factors being isomorphic with $R^k_d(1,a)$. For a partition $\lambda=(\lambda_1,...,\lambda_n)$ let $R^k_d(\lambda,a)=R^k_d(\lambda_1,a)\oplus\dots\oplus R^k_d(\lambda_n,a)$. A
decomposition symbol is a pair $\alpha=(\mu,\sigma)$ such that $\mu$ specifies (uniquely up to isomorphism) a module without
homogeneous regular summands and $\sigma=\{(\lambda^1, d_1),\dots,(\lambda^r, d_r)\}$ is a Segre symbol.
Given a decomposition symbol $\alpha=(\mu,\sigma)$ and a field $k$, we define the decomposition class $S(\alpha,k)$ to be
the set of isomorphism classes of modules of the form $M(\mu,k)\oplus R$, where $M(\mu,k)$
is the $kQ$-module determined by $\mu$ and $R=R^k_{d_1}(\lambda^1,a_1)\oplus\dots\oplus R^k_{d_r}(\lambda^r,a_r)$
for some distinct points $a_1,\dots a_r\in\mathbb H_k$ such that $\deg a_i = d_i$.

Note that for $k$ finite with $q$ elements the number of points $a\in\mathbb H_k$ of degree 1 is $q+1$, $q$ or $q-1$ in the $\tilde A_n$ case (depending on the quiver) and  $q-2$ for tame quivers of type $\tilde D_n,\tilde E_6, \tilde E_7,\tilde E_8$. The number of points $a\in\mathbb H_k$ of degree $l\geq 2$ is $N(q,l)=\frac1{d}\sum_{d|l}\mu(\frac{l}d)q^d$, where $\mu$ is the M\"obius function and $N(q,l)$ is the
number of monic, irreducible polynomials of degree $l$ over a field with $q$ elements. We can conclude that for a decomposition symbol $\alpha$ the polynomial  $n_{\alpha}(q)=|S(\alpha,k)|$ is strictly increasing in $q>1$ (see \cite{hubery}).

Finally we fix that for a preprojective (preinjective) indecomposable corresponding to the positive real root $x$ the decomposition symbol has the form $\alpha=(x,\emptyset)$ and for a simple homogeneous regular of dimension $\delta$ the decomposition symbol is $(\emptyset,((1),1))$.

The following lemma summarizes some well known properties in the category $\Mod kQ$.
\begin{lemma} {\rm a)} For $P$ preprojective, $I$ preinjective and $R$ regular module
we have $\Hom ({R},{P})=\Hom ({I},{P})=\Hom
({I},{R})=\Ext^1({P},{R})=\Ext^1({P},{I})= \Ext^1({R},{I})=0.$ It follows that the submodules of a preprojective are always preprojective and a submodule of a regular cannot have preinjective components.

{\rm b)} Indecomposables from different tubes have no nonzero homomorphisms
and no non-trivial extensions.

{\rm c)} For $P$ indecomposable preprojective, $I$ indecomposable
preinjective and $R^k_1(1,a)$ simple homogeneous regular of dimension $\delta$ we have
$$\dim_k\End(P)=\dim_k\End(I)=\dim_k\End(R^k_1(1,a))=1.$$
\end{lemma}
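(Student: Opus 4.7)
The plan is to deduce everything from three standard ingredients: the Auslander-Reiten duality $\Ext^1(X,Y)\cong D\Hom(Y,\tau X)$ (valid for hereditary algebras, with the usual care for projective/injective summands), the $\tau$-stability of each of the three indecomposable classes (preprojective, regular, preinjective), and the Euler form identity ${\bf q}(\underline\dim M)=\dim_k\End(M)-\dim_k\Ext^1(M,M)$. All of (a)--(c) are classical facts about tame hereditary algebras, so the task is really to lay out the dependencies.

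For (a), I first reduce the three $\Ext^1$ vanishings to $\Hom$ vanishings via Auslander-Reiten duality: $\Ext^1(P,R)\cong D\Hom(R,\tau P)$ with $\tau P$ preprojective or zero, $\Ext^1(R,I)\cong D\Hom(I,\tau R)$ with $\tau R$ regular, and $\Ext^1(P,I)\cong D\Hom(\tau^{-1}I,P)$ with $\tau^{-1}I$ preinjective or zero; so it suffices to prove $\Hom(R,P)=\Hom(I,P)=\Hom(I,R)=0$. For these I would use the translation trick: write an indecomposable preprojective as $\tau^{-m}P_0$ with $P_0$ indecomposable projective; then $\Hom(R,P)\cong\Hom(\tau^m R,P_0)$, with $\tau^m R$ still regular. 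A nonzero map $\tau^m R\to P_0$ would have its image sitting inside $P_0$ (hence preprojective, since submodules of projectives over a hereditary algebra are projective) and simultaneously being a nonzero quotient of a regular, contradicting the standard fact that a regular module has no nonzero preprojective quotient (a consequence of the layered structure of the AR quiver of $kQ$, see \cite{dlabrin,rin1,aus}). The remaining two $\Hom$ vanishings follow by the dual argument, writing an indecomposable preinjective as $\tau^m I_0$ with $I_0$ indecomposable injective. The submodule assertions of (a) are then immediate: a regular or preinjective summand of a submodule of a preprojective would contradict $\Hom(R,P)=0$ respectively $\Hom(I,P)=0$, and a preinjective summand of a submodule of a regular would contradict $\Hom(I,R)=0$.

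For (b), I would appeal to the standard decomposition of the regular subcategory as an abelian coproduct of the tube subcategories (indexed by $\mathbb H_k$ together with the finitely many non-homogeneous tubes), the simple regulars in distinct tubes being pairwise $\Hom$- and $\Ext^1$-orthogonal; a d\'evissage on regular length then extends the orthogonality to arbitrary indecomposables. For (c), any indecomposable preprojective or preinjective $M$ has $\underline\dim M$ a positive real Schur root, so $\Ext^1(M,M)=0$ and ${\bf q}(\underline\dim M)=\dim_k\End(M)=1$; the simple homogeneous regular $R^k_1(1,a)$ with $\deg a=1$ has endomorphism ring equal to the residue field $k(a)$, which coincides with $k$ precisely when $\deg a=1$.

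The only mildly delicate step is establishing $\Hom(R,P)=0$ in (a); it rests on the non-trivial structural fact that the regular subcategory is closed under quotients, which in a fully self-contained writeup I would prove via a Coxeter-transformation / defect argument on the AR quiver of $kQ$. In the present context, however, all the facts above are classical and I would simply invoke the references cited in the paper rather than redevelop them from scratch.
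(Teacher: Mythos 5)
The paper offers no proof of this lemma at all: it is introduced as a summary of well-known properties and justified only by the references cited at the start of Section 2 (\cite{assem}, \cite{aus}, \cite{dlabrin}, \cite{rin1}), which is also where your proposal ultimately lands. Your sketch of the standard arguments (Auslander--Reiten duality to reduce the $\Ext^1$ vanishings to $\Hom$ vanishings, the translation trick for $\Hom(R,P)=\Hom(I,P)=\Hom(I,R)=0$, the tube decomposition for (b), and the Euler form plus $\Ext^1(M,M)=0$ for (c)) is the correct classical route and is consistent with what the cited sources do. One caveat: in your closing paragraph you say the argument rests on the fact that ``the regular subcategory is closed under quotients''; this is false as stated (a quotient of a regular module can be preinjective --- e.g.\ a simple regular surjects onto a simple injective for the Kronecker quiver). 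The fact your argument actually uses, and which is the correct one, is the weaker statement that a nonzero quotient of a regular module has no preprojective direct summand; with that substitution the outline is sound.
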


Suppose now that $k$ is finite. We consider the rational Ringel-Hall algebra $\mathcal{H}(kQ)$
of the algebra $kQ$. Its $\Bbb Q$-basis is formed by the isomorphism
classes $[M]$ from $\Mod kQ$ and the multiplication is defined by
$$[N_1][N_2]=\sum_{[M]}F^M_{N_1N_2}[M].$$ The structure constants
$F^M_{N_1N_2}=|\{U\subseteq M|\ U\cong N_2,\ M/U\cong N_1\}|$ are
called Ringel-Hall numbers.

There is an important theorem by Hubery in \cite{hubery} which proves the existence of Ringel-Hall polynomials in tame cases with respect to the decomposition classes. More precisely we have the following theorem
\begin{theorem}{\rm(\cite{hubery})} Given decomposition symbols $\alpha,\beta$ and $\gamma$, there exists a
rational polynomial $F^{\gamma}_{\alpha\beta}$ such that for any finite field $k$ with $|k|=q$,
$$F^{\gamma}_{\alpha\beta}(q)=\sum_{\begin{array}{c}A\in S(\alpha,k)\\B\in S(\beta,k)\end{array}}F^C_{AB}\text{\hskip1cm for all }C\in S(\gamma,k).$$
\end{theorem}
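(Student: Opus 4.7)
The plan is to express the sum as a count of $\mathbb F_q$-points on a constructible set attached to a quiver Grassmannian defined over $\mathbb Z$ and then invoke polynomial-count arguments. First, unfolding the definition of the Hall numbers yields
$$\sum_{A \in S(\alpha,k),\, B \in S(\beta,k)} F^C_{AB} = \bigl|\{U \subseteq C : [U] \in S(\beta,k),\ [C/U] \in S(\alpha,k)\}\bigr|,$$
so the statement splits into two claims: (i) this count is the same for every $C \in S(\gamma, k)$, and (ii) the common value is a polynomial in $q = |k|$.

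For (i), by Krull--Schmidt two modules $C, C' \in S(\gamma,k)$ have identical non-homogeneous parts (determined by the combinatorial datum $\mu$ of $\gamma$) and differ only by a bijective relabeling of the points $a_i \in \mathbb H_k$ indexing their homogeneous regular summands. By part (b) of the preceding Lemma, every indecomposable summand of any submodule $U \subseteq C$ lies in a single tube of $C$; hence a relabeling bijection on the $a_i$ induces a bijection between the two submodule sets that preserves the decomposition classes of both $U$ and $C/U$. This gives independence of $C$ within $S(\gamma, k)$.

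For (ii), I would set up a $\GL$-equivariant $\mathbb Z$-scheme $\mathcal X(\gamma; \alpha, \beta)$ parametrizing pairs $(C, U)$ with $C$ lying in the locally closed stratum $R(\gamma)$ of the representation variety associated to the dimension vector of $\gamma$ and with $U$ a submodule satisfying the class constraints. The projection $\mathcal X \to R(\gamma)$ is $\GL$-equivariant with constant fiber size $F^\gamma_{\alpha\beta}(q)$ by (i). Since $|S(\gamma, k)|$ is polynomial in $q$ (cited earlier from \cite{hubery}) and the orbit-size computation makes $|R(\gamma)(k)|$ polynomial, it suffices to prove that $|\mathcal X(k)|$ is polynomial. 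This I would achieve by stratifying $\mathcal X$ according to the tube distributions of $U$ and $C/U$: within each homogeneous tube (rank $1$) the local Hall count reduces to classical Hall numbers for the Jordan quiver, which are polynomial in $q^{\deg a}$, while the non-homogeneous part and the preprojective/preinjective contributions are polynomial by Ringel's classical theorem in the rigid / Dynkin-type setting.

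The main obstacle lies in step (ii): a submodule $U$ need not respect the tube decomposition of $C$, because maps $P \to R$ and $P \to I$ can be nonzero (only the opposite directions vanish, by part (a) of the preceding Lemma). To control this I would combine the canonical preprojective--regular--preinjective filtration $0 \subseteq C_{\mathrm{pp}} \subseteq C_{\mathrm{pp}} \oplus C_{\mathrm{reg}} \subseteq C$ with the induced filtration on $U$, thereby reducing the count to the three tube-local or rigid subproblems where polynomiality is already known. Summing these polynomial contributions over the combinatorics of tube indices, an operation governed by the polynomials $N(q, l) = \frac{1}{l}\sum_{d \mid l}\mu(l/d)q^d$, then yields the polynomial $F^\gamma_{\alpha\beta}(q)$.
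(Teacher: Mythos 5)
This theorem is quoted from Hubery \cite{hubery}; the paper gives no proof of it, so there is no internal argument to compare your sketch against, and it has to be judged on its own. It has genuine gaps at both of its key steps. For (i), the claim that ``every indecomposable summand of any submodule $U\subseteq C$ lies in a single tube of $C$'' does not follow from part (b) of the lemma and is in fact false: a submodule $U$ will in general have preprojective indecomposable summands, and a preprojective $P$ admits nonzero maps into every tube (indeed $\dim_k\Hom(P,R^k_d(1,a))=-(\deg a)\,\partial P>0$, since $\Ext^1$ from a preprojective to a regular vanishes), so the inclusion $U\hookrightarrow C$ need not respect any tube decomposition. Consequently a mere relabelling of the points $a_i$ does not by itself induce a bijection between the two submodule sets; one needs an actual autoequivalence of (a suitable subcategory of) $\Mod kQ$ permuting the homogeneous tubes, which exists for the Kronecker quiver via the $\mathrm{PGL}_2$-action on $\mathbb P^1$ but requires a perpendicular-category or tilting argument in general. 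The independence of the count from the choice of $C\in S(\gamma,k)$ is part of what Hubery proves; it does not come for free from Krull--Schmidt.

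For (ii) there are two further problems. Ringel's Hall polynomial theorem applies to representation-finite hereditary algebras; it does not cover triples of preprojective modules over a tame quiver (the preprojective component is infinite, and polynomiality there is itself a nontrivial piece of Hubery's argument). More importantly, the proposed reduction along the preprojective--regular--preinjective filtration is exactly where the real difficulty sits: counting the submodules $U$ that induce prescribed filtration quotients involves Green's formula and the $\Hom$/$\Ext$ cross terms between the three layers, and these contributions do not automatically assemble into a polynomial; likewise the summation over tube distributions weighted by $N(q,l)$ must be shown to yield a polynomial in $q$ rather than in the individual $q^{\deg a}$. Your outline correctly identifies the obstruction (submodules do not respect the canonical filtration) but does not overcome it, so as it stands this is a plausible strategy rather than a proof.
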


\section{\bf Facts on GR measure and GR inclusions}
In this section we collect some well-known properties of GR measures and inclusions (see \cite{rin2},\cite{rin4},\cite{rin5},\cite{chen1},\cite{chen2} for details).

\begin{lemma} Let $X$, $Y$ and $Z$ be indecomposable modules.

{\rm a)} If $X$ is a proper submodule of $Y$, then $\mu(X)<\mu(Y)$.

{\rm b)} If $\mu(X)<\mu(Y)<\mu(Z$) and there is a GR inclusion $X\to Z$,
then $|Y|>|Z|$.
\end{lemma}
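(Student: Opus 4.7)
My plan for part (a) would be to use the definition of the Gabriel-Roiter measure directly. Pick a chain $X_1 \subset X_2 \subset \cdots \subset X_t$ of indecomposable submodules of $X$ witnessing $\mu(X) = \{|X_1|, \ldots, |X_t|\}$. Since $X \subsetneq Y$ is a proper inclusion and both $X$ and $Y$ are indecomposable, I can append $Y$ to this chain (noting $X_t \subseteq X \subsetneq Y$), obtaining a chain of indecomposable submodules of $Y$ which gives the set $\mu(X) \cup \{|Y|\}$. All elements of $\mu(X)$ are at most $|X| < |Y|$, so the symmetric difference $(\mu(X) \cup \{|Y|\}) \triangle \mu(X)$ equals $\{|Y|\}$, whose minimal element $|Y|$ lies in the larger set. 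Hence $\mu(X) \cup \{|Y|\} > \mu(X)$ in the total order, and so $\mu(Y) \geq \mu(X) \cup \{|Y|\} > \mu(X)$.

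For part (b), the plan is to argue by contradiction, assuming $|Y| \leq |Z|$. Using the GR-inclusion hypothesis, I rewrite $\mu(Z) = \mu(X) \cup \{|Z|\}$, noting that every element of $\mu(X)$ is at most $|X| < |Z|$ so $|Z|$ is the maximum of $\mu(Z)$. Let $m$ denote the smallest element of the symmetric difference $\mu(X) \triangle \mu(Y)$; since $\mu(X) < \mu(Y)$, by definition $m \in \mu(Y)$, and hence $m \leq |Y| \leq |Z|$. I then split into two cases according to whether $m < |Z|$ or $m = |Z|$.

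In the case $m < |Z|$, every element strictly less than $m$ that lies in $\mu(Y)$ must also lie in $\mu(X)$ (by minimality of $m$), and every element strictly less than $m$ in $\mu(Z) = \mu(X) \cup \{|Z|\}$ lies in $\mu(X)$ as well (since $|Z| > m$). So below $m$ the sets $\mu(Y)$ and $\mu(Z)$ agree. At the value $m$ itself one has $m \in \mu(Y)$ but $m \notin \mu(X)$ and $m \neq |Z|$, so $m \notin \mu(Z)$. Therefore $m$ is the smallest element of $\mu(Y) \triangle \mu(Z)$ and lies in $\mu(Y)$, forcing $\mu(Y) > \mu(Z)$, a contradiction. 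In the remaining case $m = |Z|$, the inequality $|Y| \leq |Z|$ and $m \leq |Y|$ force $|Y| = |Z| = m$. Then every element of $\mu(X)$ is strictly less than $m$, so by minimality of $m$ each such element lies in $\mu(Y)$; combined with the length bound $\mu(Y) \subseteq \{\leq |Y|\} = \{\leq m\}$ and $m \in \mu(Y)$, this yields $\mu(Y) = \mu(X) \cup \{m\} = \mu(Z)$, again contradicting $\mu(Y) < \mu(Z)$.

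The only subtle point is the boundary case $m = |Z|$ in part (b), where one might naively expect the argument of the main case to apply verbatim; the trick is to exploit the fact that in this situation $|Y|$ is pinned down exactly and the full set $\mu(Y)$ can be reconstructed, leading not to $\mu(Y) > \mu(Z)$ but to the stronger equality $\mu(Y) = \mu(Z)$. Beyond this, both parts are direct unwindings of the total-order definition on $\mathcal{P}(\mathbb{N}^*)$ and require no structural input from the module category.
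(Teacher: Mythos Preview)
The paper does not supply its own proof of this lemma; it is listed in Section~3 among ``well-known properties of GR measures and inclusions'' with references to Ringel and Chen, so there is nothing to compare against at the level of argument. Your direct verification from the definition of the total order on $\mathcal{P}(\mathbb{N}^*)$ is correct in both parts. In part~(b), the case split on $m<|Z|$ versus $m=|Z|$ is handled cleanly; note that in the boundary case the two-sided inclusion $\mu(Y)=\mu(X)\cup\{m\}$ follows at once from the single fact that $\mu(X)$ and $\mu(Y)$ agree on $\{1,\dots,m-1\}$ together with $\mu(Y)\subseteq\{1,\dots,m\}$ and $m\in\mu(Y)$, exactly as you indicate.
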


For two sets $I,J\in\mathcal{P}(\Bbb N^*)$ we say that $J$ starts with $I$ provided $I=J$ or $I\subset J$ and for all elements $a\in I$ and $b\in J\setminus I$ we have $a<b$.

\begin{theorem}{\rm (Third part of Main Property (Gabriel, Ringel),\cite{rin4})}. Let $X,Y_1,\dots Y_t$ be indecomposable modules and assume
that there is a monomorphism $f:X\to Y_1\oplus\dots\oplus Y_t$. If $\max\{\mu(Y_i)\}$ starts with $\mu(X)$ then there is some $j$ such that $\pi_j f$ is injective, where $\pi_j:Y_1\oplus\dots\oplus Y_t\to Y_j$ is the canonical projection.
\end{theorem}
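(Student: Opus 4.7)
The plan is to argue by contradiction, with an induction that alternates between reducing the number of summands $t$ and the length $|X|$, using the indecomposability of $X$ throughout. For $t=1$ the claim is trivial since $f=\pi_1f$. For $t\ge 2$ I would relabel so that $\mu(Y_t)=\max_i\mu(Y_i)$, which by hypothesis starts with $\mu(X)$, and suppose for contradiction that no $\pi_jf$ is injective; in particular $K:=\ker(\pi_tf)\neq 0$.

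Next I would introduce the projection $\pi'\colon Y_1\oplus\cdots\oplus Y_t\to Y_1\oplus\cdots\oplus Y_{t-1}$ and the submodule $L:=\ker(\pi'f)\subseteq X$ (those $x$ with $f(x)\in Y_t$). The injectivity of $f$ gives $K\cap L=0$, so whenever $L\neq 0$ the module $X$ contains $K\oplus L$ as a direct sum of nonzero submodules. Combining this with the indecomposability of $X$ and Lemma 3.1(a)---which forces $\mu$ to drop strictly on proper indecomposable submodules---I would argue that either one obtains a genuine splitting of $X$ (contradicting indecomposability) or a comparison of GR measures incompatible with $\mu(Y_t)$ starting with $\mu(X)$. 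This should force $L=0$, so $f$ factors through an injection $f'\colon X\hookrightarrow Y_1\oplus\cdots\oplus Y_{t-1}$.

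To close the induction I must verify that the new maximum $\max_{i<t}\mu(Y_i)$ still starts with $\mu(X)$. This is the step I expect to be the main obstacle: deleting exactly the summand $Y_t$ achieving the maximum can \emph{a priori} destroy the ``starts with'' condition, so a naive induction on $t$ will not close. I plan to bypass the difficulty by switching to induction on $|X|$ at this point: extract from the Gabriel--Roiter filtration of $X$ an indecomposable submodule $X'\subsetneq X$ lying inside $K_{j_0}$ for some index $j_0$ achieving $\max_i\mu(Y_i)$, and check using Lemma 3.1 together with the total order on $\mathcal{P}(\mathbb{N}^*)$ that $\mu(X')$ is now a prefix of $\max_{i\neq j_0}\mu(Y_i)$. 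Applying the inductive hypothesis to $X'\hookrightarrow\bigoplus_{i\neq j_0}Y_i$ produces an injection $X'\hookrightarrow Y_j$ for some $j\neq j_0$, which combined with the non-injective $\pi_{j_0}f$ should yield the desired contradiction, finishing the proof.
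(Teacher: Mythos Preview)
The paper does not prove this statement at all: Theorem~3.2 is simply quoted from \cite{rin4} and used as a black box, so there is no ``paper's own proof'' to compare against. What follows is therefore an assessment of your argument on its own terms.

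Your sketch has genuine gaps at both of its crucial junctures. First, the step meant to force $L=0$ does not work as written. From $K\cap L=0$ you only get that $K\oplus L$ sits inside $X$ as a \emph{submodule}; this in no way produces a direct-sum decomposition of $X$ itself, so indecomposability of $X$ is not contradicted. Your fallback alternative, ``a comparison of GR measures incompatible with $\mu(Y_t)$ starting with $\mu(X)$'', is left entirely unexplained, and Lemma~3.1(a) by itself (proper indecomposable submodules have strictly smaller measure) gives no leverage here: neither $K$ nor $L$ is assumed indecomposable, and even if they were, a bound on $\mu(K),\mu(L)$ says nothing about whether $\mu(Y_t)$ starts with $\mu(X)$.

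Second, the rescue via induction on $|X|$ is also unjustified. You assert that a term $X'$ of the GR filtration of $X$ can be chosen to lie inside $K_{j_0}=\ker(\pi_{j_0}f)$; there is no reason this should hold---a GR submodule of $X$ has no a~priori relation to the kernels of the $\pi_jf$. You then claim $\mu(X')$ is a prefix of $\max_{i\neq j_0}\mu(Y_i)$, but removing precisely the summand with the largest measure can genuinely destroy any prefix relation, and nothing in your outline prevents this. Finally, even granting an embedding $X'\hookrightarrow Y_j$ for some $j\neq j_0$, you do not say how this, ``combined with the non-injective $\pi_{j_0}f$'', yields a contradiction; an embedding of a proper submodule $X'$ into some $Y_j$ is perfectly compatible with all $\pi_if$ being non-injective on $X$.

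In short, the overall architecture (contradiction, alternating induction on $t$ and $|X|$) is reasonable in spirit, but the two load-bearing inferences are missing. If you want to reconstruct a proof, Ringel's original argument in \cite{rin4} proves all three parts of the Main Property simultaneously by induction on $|X|$; the interaction between parts (1), (2) and (3) is what supplies the comparisons of measures that your sketch tries to conjure from Lemma~3.1 alone.
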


For $X,Y$ indecomposables let us denote by $\Sing(X,Y)$ the set of maps $X\to Y$ which are not monomorphisms.
\begin{proposition}{\cite{rin2}} Suppose  $Y$ is not simple and $X\to Y$ is a GR inclusion. We have the following:

{\rm a)} $\Sing(X,Y)$ is a $k$-subspace of $\Hom(X,Y)$.

{\rm b)} Suppose $k$ is finite with $q$ elements and let $e=\dim_k\End(X)$, $r=\dim_k\rad\End(X)$,
$h=\dim_k\Hom(X,Y)$, $s=\dim_k\Sing(X,Y)$.
Then the number $u^Y_X$ of submodules of $Y$ which are isomorphic to $X$ is
$\frac{q^{s-r}(q^{h-s}-1)}{(q^{e-r}-1)}$ (and $h>s\geq r,e>r$).
\end{proposition}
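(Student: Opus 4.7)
The plan is to treat the two parts separately: part (a) via the Third Part of the Main Property (the theorem stated just above) applied to the map $(f,g):X\to Y\oplus Y$, and part (b) by a straightforward orbit-counting argument that combines (a) with the local structure of $\End(X)$ over the finite field $k$.

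For (a), scalar closure is immediate since $\Ker(cf)=\Ker f$ for $c\ne 0$ and $X\ne 0$ handles $c=0$. The content is additive closure. Given $f,g\in\Sing(X,Y)$, I would form $\phi=(f,g):X\to Y\oplus Y$ and observe that $\mu(Y)=\mu(X)\cup\{|Y|\}$ starts with $\mu(X)$, because every element of $\mu(X)$ is at most $|X|<|Y|$. Hence $\max\{\mu(Y),\mu(Y)\}=\mu(Y)$ starts with $\mu(X)$ and the Main Property applies: if $\phi$ were a monomorphism, one of the projections $\pi_j\phi\in\{f,g\}$ would be a monomorphism, contradicting the choice of $f,g$. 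Therefore $\Ker\phi=\Ker f\cap\Ker g\ne 0$, and any nonzero vector in this intersection annihilates $f+g$, so $f+g\in\Sing(X,Y)$. The main obstacle is exactly this step: one must recognize that the GR hypothesis delivers the stronger conclusion that $\Ker f\cap\Ker g$ is nonzero, not merely that one of $f,g$ fails to be a monomorphism.

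For (b), once (a) makes $\Sing(X,Y)$ into a $k$-subspace, the number of monomorphisms $X\to Y$ equals $|\Hom(X,Y)|-|\Sing(X,Y)|=q^h-q^s$. Two monomorphisms share an image if and only if they differ by an element of $\Aut(X)$, and since $X$ is a finite-dimensional indecomposable over $k$ the ring $\End(X)$ is local, so $|\Aut(X)|=q^e-q^r=q^r(q^{e-r}-1)$. Dividing gives the claimed formula for $u^Y_X$. The three inequalities fall out naturally: $h>s$ because the GR inclusion itself is a monomorphism lying outside $\Sing(X,Y)$; $e>r$ because $1_X$ is a unit; and for $s\ge r$ I would fix any monomorphism $f_0:X\to Y$ and exhibit the injective $k$-linear embedding $\rad\End(X)\hookrightarrow\Sing(X,Y)$ given by $\alpha\mapsto f_0\alpha$, which lands in $\Sing(X,Y)$ because every $\alpha\in\rad\End(X)$ is a non-unit of $\End(X)$, hence (by finite-dimensionality) a non-injective endomorphism of $X$, so $f_0\alpha$ is non-injective as well.
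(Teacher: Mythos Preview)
The paper does not supply its own proof of this proposition: it is quoted from \cite{rin2} and stated without argument. Your proposal is correct and is in fact the standard proof one finds in Ringel's paper. Part (a) is exactly the intended application of the Main Property (Theorem~3.2) to the diagonal map $X\to Y\oplus Y$, and part (b) is the orbit count $u^Y_X=(q^h-q^s)/(q^e-q^r)$ together with the locality of $\End(X)$; your verification of $s\ge r$ via the injection $\alpha\mapsto f_0\alpha$ from $\rad\End(X)$ into $\Sing(X,Y)$ is also the usual one. Nothing is missing.
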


\section{\bf GR measure of preprojectives}
The aim of this section is to prove that the GR inclusions in
preprojectives are field independent. In particular the GR measure
of preprojectives is also field independent. We should keep in mind
that submodules of preprojectives are preprojective and a preprojective indecomposable of dimension $x$ is denoted by $P(x,k)$, where $x$ is a positive real root with $\partial x<0$.

We know due to Ringel (see \cite{rin2}) that the previous statement
is true for all the indecomposables in the Dynkin case. Our approach
for the tame case uses Ringel's ideas from \cite{rin2} but also
introduces some new concepts. Note that contrary to the Dynkin case, we cannot use Schofield short exact sequences.

Our first lemma is a straightforward generalization of the corresponding result by Ringel in the Dynkin case (see \cite{rin2}).

\begin{lemma} If there is a GR inclusion $P(x,k)\to P(y,k)$ then there is a monomorphism $P(x,k_0)\to P(y,k_0)$.
\end{lemma}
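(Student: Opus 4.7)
My plan is to combine the explicit matrix-level compatibility between $P(\cdot,k)$ and $P(\cdot,k_0)$ fixed in Section 2 with Proposition 3.3(a). By the convention recalled there, the linear maps attached to the arrows of $P(x,k)$ and $P(y,k)$ are given, in the canonical bases, by the same matrices as those of $P(x,k_0)$ and $P(y,k_0)$. A morphism between two such representations is a tuple $(f_i)$ of matrices solving the homogeneous linear system $B_\alpha f_{s(\alpha)}=f_{e(\alpha)}A_\alpha$ whose coefficient matrix has entries in $k_0$, so my first step is to observe the base-change identity
\[
\Hom(P(x,k),P(y,k))=\Hom(P(x,k_0),P(y,k_0))\otimes_{k_0}k.
\]
In particular a $k_0$-basis of the left-hand side is a $k$-basis of the right-hand side, and a $k_0$-morphism is a monomorphism over $k_0$ if and only if it is a monomorphism over $k$, because at each vertex this is the nonvanishing of maximal minors of an integer or $k_0$-valued matrix and is insensitive to enlarging the scalar field.

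Next I would invoke Proposition 3.3(a). We may assume $P(y,k)$ is not simple, since otherwise the GR inclusion is already an isomorphism and the identity on $P(x,k_0)=P(y,k_0)$ provides what we need. The proposition then says that $\Sing(P(x,k),P(y,k))$ is a $k$-subspace of $\Hom(P(x,k),P(y,k))$, and the given GR inclusion witnesses that this inclusion is strict.

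The finish is a contradiction argument. Suppose no $g\in\Hom(P(x,k_0),P(y,k_0))$ is a monomorphism $P(x,k_0)\to P(y,k_0)$; by the rank observation in the first paragraph, each such $g$, regarded inside $\Hom(P(x,k),P(y,k))$, lies in $\Sing(P(x,k),P(y,k))$. But by the base-change identity, the morphisms coming from $\Hom(P(x,k_0),P(y,k_0))$ $k$-span all of $\Hom(P(x,k),P(y,k))$, and the latter singular set is a $k$-subspace; this would force $\Sing(P(x,k),P(y,k))=\Hom(P(x,k),P(y,k))$, contradicting the existence of the given GR monomorphism. Hence some $g\in\Hom(P(x,k_0),P(y,k_0))$ is a monomorphism $P(x,k_0)\to P(y,k_0)$, as required. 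The main substantive input I will lean on is the subspace property of $\Sing$ provided by Proposition 3.3(a); the only real hazard is ensuring the base-change identification and the field-independence of monomorphism status, both of which fall out directly from the matrix conventions of Section 2.
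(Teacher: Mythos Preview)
Your argument is correct and shares with the paper the decisive input, namely Proposition~3.3(a): for a GR inclusion $P(x,k)\to P(y,k)$ the set $\Sing(P(x,k),P(y,k))$ is a $k$-subspace, so any $k$-spanning family of $\Hom(P(x,k),P(y,k))$ must contain a monomorphism. The difference lies in how that spanning family defined over $k_0$ is produced. The paper appeals to Auslander--Reiten theory: it fixes irreducible maps $f^0_{z,t}$ over $k_0$, shows they remain irreducible over $k$ via the transport of almost split sequences, and then uses that every morphism between preprojectives is a $k$-linear combination of compositions of such irreducible maps; hence some composition, already living over $k_0$, is a monomorphism. Your route is more elementary: from the matrix conventions of Section~2 you read off the base-change identity $\Hom(P(x,k),P(y,k))=\Hom(P(x,k_0),P(y,k_0))\otimes_{k_0}k$ directly (solving a homogeneous linear system with coefficients in $k_0$), so a $k_0$-basis of $\Hom$ over $k_0$ already $k$-spans $\Hom$ over $k$, and the rank argument shows that being a monomorphism is unaffected by the extension $k_0\hookrightarrow k$. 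This bypasses the AR-theoretic verification entirely and yields a shorter proof; the paper's approach, on the other hand, makes the AR structure explicit and singles out a composition of irreducible maps as the monomorphism, which is conceptually pleasant even if not strictly needed here. One small cosmetic point: if $P(y,k)$ is simple there is no proper GR submodule at all, so the hypothesis is vacuous rather than forcing an isomorphism; but this does not affect your argument.
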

\begin{proof} Suppose that for two indecomposable preprojectives $P(z,k)$ and $P(t,k)$ we have an irreducible morphism $P(z,k)\to P(t,k)$. Knowing the
structure of the AR quiver (no multiple arrows) it follows that
$\dim_k\Irr(P(z,k),P(t,k))=1$, but this implies also that
$\dim_{k_0}\Irr(P(z,k_0),P(t,k_0))=1$. Denote by $f^0_{z,t}$ an
irreducible morphism $P(z,k_0)\to P(t,k_0)$, so its class modulo
$\rad^2(P(z,k_0),P(t,k_0))$ is a nonzero element from
$\Irr(P(z,k_0),P(t,k_0))$. Viewing $f^0_{z,t}$ as a collection of
square matrices over $k_0$ corresponding to the vertices (satisfying
some matrix equations), $f^0_{z,t}$ can be also considered as a
morphism $f_{z,t}:P(z,k)\to P(t,k)$ (having in mind the conventions
made in the begining of Section 2). Moreover we have that $f_{z,t}$ is also irreducible. Indeed,
there is an almost split sequence over $k_0$ of the form
$$\xymatrix{0\ar[r]& P(z,k_0)\ar[r]^{\tiny\left(\begin{array}{c} f^0_{z,t}\\f^0
\end{array}\right)}& P(t,k_0)\oplus P'(k_0)\ar[r] & \tau^{-1}P(z,k_0)\ar[r]&0}.$$
Keeping all the matrices over $k_0$, the exact sequence above has a natural form over $k$
$$\xymatrix{0\ar[r]& P(z,k)\ar[r]^{\tiny\left(\begin{array}{c} f_{z,t}\\f
\end{array}\right)}& P(t,k)\oplus P'(k)\ar[r] & \tau^{-1}P(z,k)\ar[r]&0},$$ which remains almost split (since $\overline{\End}(P(z,k))\cong k$, see \cite{aus} V.2).
So we can conclude that the class of $f_{z,t}$ forms a basis in $\Irr(P(z,k),P(t,k))$. From now on we fix all these special irreducible morphisms.

As in the Dynkin case, it is true for the preprojectives that every element
in $\Hom(P(x,k),P(y,k))$ is a linear combination of compositions of
irreducible basis maps of type $f_{z,t}$ (see
\cite{aus} VIII.1, V.7,\cite{assem} VIII.2, IV.5).

Since $P(x,k)\to P(y,k)$ is a GR inclusion then we know by Ringel
(see \cite{rin2}) that $\Sing(P(x,k),P(y,k))$ is a $k$-subspace of
$\Hom(P(x,k),P(y,k))$. So if all nonzero compositions of irreducible
basis maps are in $\Sing(P(x,k),P(y,k))$ then their linear
combination is also in $\Sing(P(x,k),P(y,k))$, thus
$\Sing(P(x,k),P(y,k))=\Hom(P(x,k),P(y,k))$ which is impossible. This
means that there is a nonzero composition of irreducible basis maps
which is not in $\Sing(P(x,k),P(y,k))$, so it is mono. But all the
basis maps $f_{z,t}$ are defined over $k_0$, so the composition above
will give us the desired monomorphism $P(x,k_0)\to P(y,k_0)$.
\end{proof}

Using the convention from the beginning of Section 2, the following lemma is trivial:
\begin{lemma} If there is a monomorphism $P(x,k_0)\to P(y,k_0)$ then there is a monomorphism $P(x,k)\to P(y,k)$.
\end{lemma}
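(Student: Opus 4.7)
The plan is to unwind the convention fixed at the start of Section~2 and observe that both the representations and the morphism are given by explicit matrices over $k_0$, which survive the base change to $k$ without losing any property.

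First I would fix a monomorphism $f^0 : P(x,k_0) \to P(y,k_0)$. Choosing canonical bases at every vertex, $f^0$ is given by a collection of rectangular matrices $F^0_i$ with entries in $k_0$, one for each $i \in Q_0$, satisfying the intertwining relations $B^0_\alpha F^0_{s(\alpha)} = F^0_{e(\alpha)} A^0_\alpha$ at every arrow $\alpha$, where $A^0_\alpha$ and $B^0_\alpha$ are the arrow matrices of $P(x,k_0)$ and $P(y,k_0)$ respectively.

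Next I would apply the construction convention: by definition, $P(x,k)$ and $P(y,k)$ are the representations over $k$ whose arrow matrices (in the canonical basis) are exactly the same matrices $A^0_\alpha$ and $B^0_\alpha$, now interpreted as matrices over $k$ via the inclusion $k_0 \hookrightarrow k$. Consequently the same matrices $F^0_i$, read over $k$, continue to satisfy the intertwining relations and therefore define a morphism $f : P(x,k) \to P(y,k)$.

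It remains only to verify injectivity. Since $f^0$ is a monomorphism of representations over $k_0$, each matrix $F^0_i$ has full column rank over $k_0$, i.e.\ admits some nonzero maximal minor in $k_0$. The same minor, viewed in $k$, is still nonzero (the inclusion $k_0 \hookrightarrow k$ sends nonzero elements to nonzero elements), so the rank of each $F^0_i$ is unchanged after base change, and $f$ is a monomorphism in $\Mod kQ$. There is no serious obstacle here; the entire content of the lemma is the convention that $P(x,k)$ is obtained from $P(x,k_0)$ by interpreting its arrow matrices over the larger field, together with the trivial fact that rank is preserved under field extension.
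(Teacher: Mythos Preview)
Your argument is correct and matches the paper's approach exactly: the paper states that the lemma is trivial once one uses the convention from the beginning of Section~2, and what you have written is precisely the routine verification behind that triviality claim.
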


Using the previous two lemmas we have the following corollary:
\begin{corollary} Consider a field $k$ and its prime field $k_0$. Then for GR
measures we have $\mu(P(x,k_0))=\mu(P(x,k))$, moreover we have a
GR inclusion $P(x,k_0)\to P(y,k_0)$ iff we have a GR inclusion
$P(x,k)\to P(y,k)$.
\end{corollary}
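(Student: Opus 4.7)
The plan is to prove the measure equality and the GR-inclusion biconditional simultaneously by induction on $|y|$. Since $|P(y,k)|=|y|$ depends only on the dimension vector, and by Lemma 2.1(a) every indecomposable submodule of a preprojective is again preprojective, of the form $P(x',k)$ with $|x'|<|y|$, the induction is well-defined over both fields at once. The base case $|y|=1$ is trivial, as $P(y,\cdot)$ is then simple with $\mu=\{1\}$ and no proper indecomposable submodules.

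For the inductive step, I would fix a GR submodule $P(x,k)\subset P(y,k)$, so that $\mu(P(y,k))=\mu(P(x,k))\cup\{|y|\}$. Lemma 4.1 provides an embedding $P(x,k_0)\hookrightarrow P(y,k_0)$, exhibiting $P(x,k_0)$ as an indecomposable submodule over $k_0$. I would then show that $P(x,k_0)$ is in fact a GR submodule of $P(y,k_0)$: for an arbitrary indecomposable submodule $P(x',k_0)\subset P(y,k_0)$, Lemma 4.2 produces an embedding $P(x',k)\hookrightarrow P(y,k)$, whence $\mu(P(x',k))\leq\mu(P(x,k))$ by the defining maximality property of the GR submodule $P(x,k)$. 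The inductive hypothesis applied to both $x$ and $x'$ (both of length strictly less than $|y|$) converts this to $\mu(P(x',k_0))\leq\mu(P(x,k_0))$. Thus $P(x,k_0)$ is a GR submodule of $P(y,k_0)$, and $\mu(P(y,k_0))=\mu(P(x,k_0))\cup\{|y|\}=\mu(P(x,k))\cup\{|y|\}=\mu(P(y,k))$, closing the induction for the measure equality.

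The biconditional on GR inclusions then follows formally. A GR inclusion $P(x,k)\to P(y,k)$ means $\mu(P(y,k))=\mu(P(x,k))\cup\{|y|\}$; combining this with the measure equalities for $x$ and for $y$ yields $\mu(P(y,k_0))=\mu(P(x,k_0))\cup\{|y|\}$, and since Lemma 4.1 supplies the required embedding $P(x,k_0)\hookrightarrow P(y,k_0)$, this embedding is itself a GR inclusion. The converse direction is entirely analogous, invoking Lemma 4.2 in place of Lemma 4.1 to lift the $k_0$-embedding to a $k$-embedding.

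The only nontrivial content lies in Lemma 4.1, which has already been established; the corollary itself is a clean inductive consolidation of the two preceding lemmas. The main point to watch is avoiding a circular appeal to the inductive hypothesis — this is guaranteed by the fact that every proper indecomposable submodule has strictly smaller length, and that the uniqueness of preprojective indecomposables with a given dimension vector makes the identification of $P(x',k)$ and $P(x',k_0)$ across the two fields unambiguous.
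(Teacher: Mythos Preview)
Your argument is correct. It differs in organisation from the paper's proof, though the essential content---Lemmas~4.1 and~4.2---is the same.

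The paper proceeds \emph{non-inductively}: it takes a full GR chain $P(x_1,k)\subset\cdots\subset P(x_t,k)=P(x,k)$ realising $\mu(P(x,k))$, observes that each step is itself a GR inclusion, applies Lemma~4.1 step by step to obtain a chain of monomorphisms over $k_0$ with the same lengths, and concludes $\mu(P(x,k))\leq\mu(P(x,k_0))$. The reverse inequality is obtained symmetrically via Lemma~4.2. The GR-inclusion biconditional is then read off from the measure equality together with one more application of Lemma~4.1 (resp.~4.2) to supply the embedding. Your inductive scheme instead shows directly that the image of a GR submodule under Lemma~4.1 is again a GR submodule over $k_0$, by using Lemma~4.2 to pull competitor submodules back and the inductive hypothesis to compare their measures. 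Both routes are sound; the paper's version is a little shorter because it avoids the separate competitor argument, while yours makes the correspondence of GR submodules explicit at every length. Neither approach uses any idea not already present in the two lemmas.
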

\begin{proof} If $\mu(P(x,k))=\{n_1,...,n_t\}$ then (using the fact that submodules
of preprojectives are preprojective) there is a sequence of GR
inclusions $P(x_1,k)\to...\to P(x_t,k)=P(x,k)$ with
$|P(x_i,k)|=n_i$. By Lemma 4.1 there is a chain of
monomorphisms $P(x_1,k_0)\to...\to P(x_t,k_0)=P(x,k_0)$ with
$|P(x_i,k_0)|=n_i$. It follows that $\mu(P(x,k))\leq\mu(P(x,k_0))$.
In the same manner, using Lemma 4.2 we obtain that
$\mu(P(x,k_0))\leq\mu(P(x,k))$, so $\mu(P(x,k))=\mu(P(x,k_0))$.

Suppose now that we have a GR inclusion $P(x,k)\to P(y,k)$, so
$\mu(P(y,k))=\mu(P(x,k))\cup\{|y|\} $. Then by Lemma 4.1 we have a
monomorphism $P(x,k_0)\to P(y,k_0)$, moreover using the first part
of our statement we have
$\mu(P(y,k_0))=\mu(P(y,k))=\mu(P(x,k))\cup\{|y|\}=\mu(P(x,k_0))\cup\{|y|\}$,
which means that we have a GR inclusion $P(x,k_0)\to P(y,k_0)$.
Conversely we proceed in the same way.
\end{proof}

In the next lemma we use Hubery's result on the existence of
Ringel-Hall polynomials in the tame case (see Theorem 2.2).

\begin{lemma} If there is a monomorphism $P(x,k)\to P(y,k)$ for $k$ finite then there is a monomorphism $P(x,k')\to P(y,k')$ for
$k'$ finite and $|k'|$ big enough.
\end{lemma}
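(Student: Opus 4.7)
The plan is to encode the existence of a monomorphism $P(x,k)\hookrightarrow P(y,k)$ as the non-vanishing of a Ringel--Hall polynomial in the sense of Hubery (Theorem 2.2), and then exploit the fact that a non-zero polynomial which is non-negative at all prime powers is strictly positive at all sufficiently large prime powers.

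First, given the monomorphism, I would form the short exact sequence $0\to P(x,k)\to P(y,k)\to C\to 0$ and let $\beta$ be the decomposition symbol of the cokernel, so that $C\in S(\beta,k)$. Because preprojective indecomposables are determined by their dimension vectors, the decomposition classes $S((x,\emptyset),k)=\{[P(x,k)]\}$ and $S((y,\emptyset),k)=\{[P(y,k)]\}$ are singletons. Theorem 2.2 then yields a rational polynomial $F^{(y,\emptyset)}_{\beta,(x,\emptyset)}$ satisfying
$$F^{(y,\emptyset)}_{\beta,(x,\emptyset)}(q)=\sum_{B\in S(\beta,k)}F^{P(y,k)}_{B,P(x,k)}$$
at $q=|k|$. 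Since the Ringel--Hall number $F^{P(y,k)}_{C,P(x,k)}\geq 1$ by construction, the polynomial $F^{(y,\emptyset)}_{\beta,(x,\emptyset)}$ takes a strictly positive value at $q=|k|$ and is therefore not identically zero.

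Every value of $F^{(y,\emptyset)}_{\beta,(x,\emptyset)}$ at a prime power is a sum of Ringel--Hall numbers over some finite field, hence a non-negative integer. A non-zero rational polynomial has only finitely many roots, so $F^{(y,\emptyset)}_{\beta,(x,\emptyset)}(q')>0$ for every sufficiently large prime power $q'$. Choosing $k'$ with $|k'|=q'$ large enough (one may further ensure $q'\geq|k|$, so that $n_\beta(q')\geq n_\beta(|k|)\geq 1$ by the strict monotonicity recalled in the text, guaranteeing $S(\beta,k')\neq\emptyset$), we obtain
$$\sum_{B'\in S(\beta,k')}F^{P(y,k')}_{B',P(x,k')}>0,$$
so some $B'\in S(\beta,k')$ admits a submodule $U\subseteq P(y,k')$ with $U\cong P(x,k')$ and $P(y,k')/U\cong B'$. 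In particular a monomorphism $P(x,k')\to P(y,k')$ exists.

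The main obstacle is essentially conceptual rather than computational: one must identify a Ringel--Hall polynomial whose non-vanishing genuinely witnesses the existence of a monomorphism, so that positivity can be transported between finite fields using Hubery's theorem. Once $\beta$ is fixed as the decomposition symbol of the cokernel, the singleton nature of $S((x,\emptyset),k)$ and $S((y,\emptyset),k)$ makes the Hall formula collapse to exactly the count one needs, and the rest of the argument reduces to the elementary fact about roots of non-zero polynomials.
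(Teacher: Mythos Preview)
Your argument is correct and follows essentially the same route as the paper: take the decomposition symbol of the cokernel, invoke Hubery's Ringel--Hall polynomial $F^{(y,\emptyset)}_{\beta,(x,\emptyset)}$, observe it is nonzero because it is positive at $q=|k|$, and conclude that it is nonzero for all large prime powers. Your version merely spells out a few details the paper leaves implicit (that $S((x,\emptyset),k)$ and $S((y,\emptyset),k)$ are singletons, and that $S(\beta,k')\neq\emptyset$ for large $|k'|$).
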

\begin{proof} Denote by $\alpha$ the decomposition symbol of the cokernel in the monomorphism $P(x,k)\to
P(y,k)$. Using Theorem 2.2 there is a
rational polynomial $F^{(y,\emptyset)}_{\alpha\ (x,\emptyset)}$ such that for any field $k$
with $q$ elements $F^{(y,\emptyset)}_{\alpha\ (x,\emptyset)}(q)=\sum_{A\in
S(\alpha,k)}F^{P(y,k)}_{A P(x,k)}$. Due to our condition
$F^{(y,\emptyset)}_{\alpha\ (x,\emptyset)}$ is a nonzero polynomial so for $q$ big enough
$F^{(y,\emptyset)}_{\alpha\ (x,\emptyset)}(q)$ is nonzero. This implies our statement.
\end{proof}

Using the convention from the begining of Section 2 and the fact that a rational matrix
taken modulo a big enough prime keeps its rank one can easily derive the following lemma:
\begin{lemma} If there is a monomorphism $P(x,\mathbb Q)\to P(y,\mathbb Q)$ then there is a
monomorphism $P(x,\mathbb Z_p)\to P(y,\mathbb Z_p)$ for $p$ a big
enough prime.
\end{lemma}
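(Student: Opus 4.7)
The plan is to exploit the fact that all structural data can be described by a finite collection of rational matrices, and that ``having full column rank'' is an open condition that is preserved under reduction modulo almost all primes.

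First, I would unwrap what the given monomorphism $f\colon P(x,\mathbb{Q})\to P(y,\mathbb{Q})$ actually is: a tuple $(f_i)_{i\in Q_0}$ of rational matrices satisfying the intertwining relations $A^{(y)}_{\alpha} f_{s(\alpha)}=f_{e(\alpha)} A^{(x)}_{\alpha}$, where $A^{(x)}_{\alpha}$ and $A^{(y)}_{\alpha}$ are the rational structure matrices defining $P(x,\mathbb{Q})$ and $P(y,\mathbb{Q})$ in the canonical bases. Since only finitely many matrices are in play, only finitely many primes $p$ divide some denominator appearing in the entries of the $A^{(\cdot)}_\alpha$ or of the $f_i$. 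For every other $p$ all these matrices admit a sensible reduction modulo $p$ and the intertwining relations are preserved, so the reductions $\bar f_i$ assemble into a morphism of representations $\bar f\colon P(x,\mathbb{Z}_p)\to P(y,\mathbb{Z}_p)$. Here one uses exactly the convention from the beginning of Section~2 identifying $P(x,\mathbb{Z}_p)$ with the representation whose structure matrices are $A^{(x)}_{\alpha}\bmod p$ (and which is indecomposable of dimension $x$ once $p$ is large enough), and similarly for $y$.

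Second, I would argue that injectivity is a generic property. Since $f$ is a monomorphism over $\mathbb{Q}$, each $f_i$ has full column rank $x_i$; equivalently, some $x_i\times x_i$ minor $m_i$ of $f_i$ is a nonzero rational number. Fix one such minor at each vertex. A prime $p$ fails to preserve this condition only if $p$ divides the numerator of some $m_i$ (or if it already divides a denominator somewhere in the matrix entries). There are only finitely many such bad primes; for all other primes $p$ the reduced minors $\bar m_i$ are nonzero, each $\bar f_i$ has full column rank, hence each $\bar f_i$ is injective, and therefore $\bar f\colon P(x,\mathbb{Z}_p)\to P(y,\mathbb{Z}_p)$ is a monomorphism of representations.

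Finally, I would collect the finitely many excluded primes (those coming from denominators of the $A^{(\cdot)}_\alpha$ and $f_i$, from numerators of the chosen minors, and from the bound guaranteeing that $P(x,\mathbb{Z}_p)$ and $P(y,\mathbb{Z}_p)$ are indeed the preprojective indecomposables of the stated dimensions) into a single threshold, and conclude that for every $p$ beyond that threshold the desired monomorphism exists. I do not expect a real obstacle here: the whole argument is the standard ``reduce mod $p$'' principle, and the only thing that genuinely requires care is matching the reduced representation with the preprojective indecomposable $P(x,\mathbb{Z}_p)$, which is already taken care of by the convention fixed in Section~2.
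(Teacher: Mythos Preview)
Your proposal is correct and follows exactly the approach the paper indicates: the paper does not give a formal proof either, stating only that the lemma ``can easily [be] derive[d]'' from the convention of Section~2 together with the fact that a rational matrix reduced modulo a large enough prime keeps its rank. Your write-up is precisely a careful unpacking of that sentence.
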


Corollary 4.3 together with the lemmas above imply:
\begin{corollary} Consider two fields $k,k'$ with prime characteristic and a third field $k''$ of characteristic 0. Then for GR
measures we have $\mu(P(x,\mathbb Q))=\mu(P(x,k''))\leq
\mu(P(x,k))=\mu(P(x,k'))$, moreover we have a GR inclusion
$P(x,k)\to P(y,k)$ iff we have a GR inclusion $P(x,k')\to P(y,k')$.
\end{corollary}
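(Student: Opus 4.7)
The plan is to reduce everything to prime fields via Corollary 4.3, and then to bridge across characteristics using Lemmas 4.4 and 4.5. Corollary 4.3 yields $\mu(P(x,k))=\mu(P(x,\mathbb F_p))$ whenever $\mathrm{char}\,k=p$, and $\mu(P(x,k''))=\mu(P(x,\mathbb Q))$ when $\mathrm{char}\,k''=0$. The measure claim thus reduces to proving (i) $\mu(P(x,\mathbb F_p))=\mu(P(x,\mathbb F_{p'}))$ for any primes $p,p'$, and (ii) $\mu(P(x,\mathbb Q))\leq \mu(P(x,\mathbb F_p))$ for every prime $p$.

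For (i), I would fix a GR chain $P(x_1,\mathbb F_p)\subset\cdots\subset P(x_t,\mathbb F_p)=P(x,\mathbb F_p)$ realizing $\mu(P(x,\mathbb F_p))=\{|x_1|,\dots,|x_t|\}$. Each consecutive inclusion is a monomorphism, so Lemma 4.4 yields monomorphisms $P(x_i,\mathbb F_{(p')^{m_i}})\to P(x_{i+1},\mathbb F_{(p')^{m_i}})$ for suitable integers $m_i$. Choosing $m$ to be a common multiple of all the $m_i$ and base-changing along $\mathbb F_{(p')^{m_i}}\subset \mathbb F_{(p')^m}$ preserves matrix rank, hence injectivity, and the transferred maps concatenate into a chain of indecomposable submodules of $P(x,\mathbb F_{(p')^m})$ with length sequence $|x_1|,\dots,|x_t|$. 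Therefore $\mu(P(x,\mathbb F_{(p')^m}))\geq \mu(P(x,\mathbb F_p))$, and Corollary 4.3 (applied to the prime-field inclusion $\mathbb F_{p'}\subset \mathbb F_{(p')^m}$) identifies the left side with $\mu(P(x,\mathbb F_{p'}))$. Swapping $p$ and $p'$ gives equality. For (ii), the same argument with a GR chain in $P(x,\mathbb Q)$ and Lemma 4.5 in place of Lemma 4.4 yields $\mu(P(x,\mathbb F_P))\geq \mu(P(x,\mathbb Q))$ for some large prime $P$; part (i) then extends this bound to every prime.

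For the GR-inclusion equivalence, suppose $P(x,k)\to P(y,k)$ is a GR inclusion and set $p'=\mathrm{char}\,k'$. In particular there is a monomorphism $P(x,k)\to P(y,k)$, so Lemma 4.4 produces a monomorphism $P(x,\mathbb F_{(p')^m})\to P(y,\mathbb F_{(p')^m})$ for some large $m$. The measure identity from (i), combined with the hypothesis $\mu(P(y,k))=\mu(P(x,k))\cup\{|y|\}$, gives $\mu(P(y,\mathbb F_{(p')^m}))=\mu(P(x,\mathbb F_{(p')^m}))\cup\{|y|\}$. Since $P(x,\mathbb F_{(p')^m})$ and $P(y,\mathbb F_{(p')^m})$ are both indecomposable, this monomorphism together with the measure identity is by definition a GR inclusion. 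Two successive applications of Corollary 4.3 --- first with $\mathbb F_{p'}\subset \mathbb F_{(p')^m}$, then with $\mathbb F_{p'}\subset k'$ --- yield the desired GR inclusion $P(x,k')\to P(y,k')$. The converse direction is symmetric.

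The key subtlety is that Corollary 4.3 transfers only GR inclusions and GR measures across a field/prime-field extension, not arbitrary monomorphisms. Consequently the transferred monomorphism over $\mathbb F_{(p')^m}$ cannot be descended to $\mathbb F_{p'}$ directly; it must first be upgraded to a GR inclusion, which in turn requires the GR measure identity to be in place. This is why part (i) must be proved before the GR-inclusion equivalence can be deduced.
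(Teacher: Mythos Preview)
Your overall strategy matches the paper's: reduce to prime fields via Corollary~4.3, bridge different prime characteristics with Lemma~4.4, and handle characteristic zero with Lemma~4.5. Your part~(i) spells out explicitly what the paper compresses into one line (``Using Lemma~4.4 one gets that $\mu(P(x,\mathbb F_p))\leq\mu(P(x,\mathbb F_{q^l}))$ for $l$ big enough''), and your closing remark about the logical dependency---that the measure identity must be established before one can upgrade the transferred monomorphism to a GR inclusion---is exactly the point.

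There is, however, one genuine slip in the GR-inclusion paragraph. You write: ``there is a monomorphism $P(x,k)\to P(y,k)$, so Lemma~4.4 produces a monomorphism $P(x,\mathbb F_{(p')^m})\to P(y,\mathbb F_{(p')^m})$.'' But the hypothesis of Lemma~4.4 is that the source field be \emph{finite}, while the statement allows $k$ to be an arbitrary (possibly infinite) field of prime characteristic~$p$; the Ringel--Hall polynomial argument underlying Lemma~4.4 simply does not apply otherwise. The repair is immediate and is precisely what the paper does: first invoke Lemma~4.1 (or Corollary~4.3) to descend the GR inclusion from $k$ to its prime field $\mathbb F_p$, and only then apply Lemma~4.4 to pass from $\mathbb F_p$ to $\mathbb F_{(p')^m}$. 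With that one extra step inserted, your argument is complete and coincides with the paper's proof.
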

\begin{proof} By Corollary 4.3 we have $\mu(P(x,\mathbb Q))=\mu(P(x,k''))$.
Suppose that char $k=p$ and char $k'=q$. Then again by Corollary 4.3
$\mu(P(x,k))=(P(x,\mathbb F_p))$ and $\mu(P(x,k'))=\mu(P(x,\mathbb
F_q))$. Using Lemma 4.4 one gets that $\mu(P(x,\mathbb F_p))\leq
\mu(P(x,\mathbb F_{q^l}))$ for $l$ big enough. But as before
$\mu(P(x,\mathbb F_{q^l}))=\mu(P(x,\mathbb F_q))$, so
$\mu(P(x,\mathbb F_p))\leq\mu(P(x,\mathbb F_q))$, which implies
(changing $p$ with $q$) that $\mu(P(x,\mathbb F_p))=\mu(P(x,\mathbb
F_q))$. By Lemma 4.5 we get that $\mu(P(x,\mathbb
Q))\leq\mu(P(x,\mathbb F_r))$ for $r$ a big enough prime, so the
first statement is proved.

Suppose now that we have a GR inclusion $P(x,k)\to P(y,k)$, so
$\mu(P(y,k))=\mu(P(x,k))\cup\{|y|\} $. Then by Lemma 4.1 we have a
monomorphism $P(x,\mathbb F_p)\to P(y,\mathbb F_p)$, so by Lemma 4.4
we also have a monomorphism $P(x,\mathbb F_{q^l})\to P(y,\mathbb
F_{q^l})$ for $l$ big enough. Moreover using the first part of our
statement we have $\mu(P(y,\mathbb
F_{q^l}))=\mu(P(y,k))=\mu(P(x,k))\cup\{|y|\}=\mu(P(x,\mathbb
F_{q^l}))\cup\{|y|\}$, which means that we have a GR inclusion
$P(x,\mathbb F_{q^l})\to P(y,\mathbb F_{q^l})$. By Lemmas 4.1 and 4.2
this implies a monomorphism $P(x,k')\to P(y,k')$ which is in fact a
GR inclusion since
$\mu(P(y,k'))=\mu(P(y,k))=\mu(P(x,k))\cup\{|y|\}=\mu(P(x,k'))\cup\{|y|\}$.
\end{proof}

Next we formulate a special case of Proposition 3.3.
\begin{lemma}{\rm (\cite{rin2})} Let $k$ be a finite field with $q$
elements. If we have a GR inclusion $P(x,k)\to P(y,k)$ then
$\Sing(P(x,k),P(y,k))$ is a $k$-subspace in $\Hom(P(x,k),P(y,k))$ and
the number $u^{P(y,k)}_{P(x,k)}$ of submodules of $P(y,k)$
isomorphic to $P(x,k)$ is
$$q^{s_k}(q^{h-s_k-1}+...+q+1),$$ where
$h=\dim_k\Hom(P(x,k),P(y,k))>s_k=\dim_k\Sing(P(x,k),P(y,k))$.
\end{lemma}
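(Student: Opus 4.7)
The plan is to recognize this lemma as a direct specialization of Proposition 3.3 to the case where both the source and target of the GR inclusion are preprojective indecomposables.

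First I would verify the non-simplicity hypothesis needed in Proposition 3.3. Since $P(x,k) \to P(y,k)$ is a GR inclusion, $P(x,k)$ is (isomorphic to) a proper nonzero submodule of $P(y,k)$, by Lemma 3.1 a) applied to the strict inequality $\mu(P(x,k)) < \mu(P(y,k))$. Hence $P(y,k)$ has a nonzero proper submodule and is not simple. Part a) of Proposition 3.3 then yields at once that $\Sing(P(x,k), P(y,k))$ is a $k$-subspace of $\Hom(P(x,k), P(y,k))$; the strict inequality $h > s_k$ is immediate since the given GR inclusion is itself a monomorphism and therefore lies in $\Hom(P(x,k),P(y,k)) \setminus \Sing(P(x,k),P(y,k))$.

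Next I would specialize the numerical formula in Proposition 3.3 b). By Lemma 2.1 c), $\dim_k \End(P(x,k)) = 1$, so the endomorphism ring is a field and $e = 1$, $r = \dim_k \rad \End(P(x,k)) = 0$. Substituting $e=1$ and $r=0$ into
\[
u^{P(y,k)}_{P(x,k)} \;=\; \frac{q^{s-r}(q^{h-s}-1)}{q^{e-r}-1}
\]
gives
\[
u^{P(y,k)}_{P(x,k)} \;=\; \frac{q^{s_k}(q^{h-s_k}-1)}{q-1} \;=\; q^{s_k}\bigl(q^{h-s_k-1} + \cdots + q + 1\bigr),
\]
where the last equality is the standard geometric series identity (applicable because $h - s_k \geq 1$).

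Since this lemma is simply a repackaging of Proposition 3.3 under the additional information $\dim_k \End(P(x,k)) = 1$, no real obstacle arises. The only conceptual check is the verification that $P(y,k)$ is non-simple, which is automatic from the definition of a GR inclusion; everything else is a direct substitution.
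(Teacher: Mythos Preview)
Your proposal is correct and matches the paper's approach exactly: the paper introduces this lemma with the sentence ``Next we formulate a special case of Proposition 3.3'' and gives no further proof, so the intended argument is precisely the substitution $e=1$, $r=0$ (coming from Lemma 2.1 c)) into the formula of Proposition 3.3 b), together with the observation that $P(y,k)$ is not simple.
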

\begin{remark} Note that in the previous lemma
$h=\dim_k\Hom(P(x,k),P(y,k))=\langle x,y\rangle$ does not depend on
the field $k$. However we do not know a similar statement for $s_k$.
\end{remark}

Corollary 4.6 implies a stronger version of the lemma above.
\begin{proposition} Let $k'$ be a finite field with $q'$
elements. If we have a GR inclusion $P(x,k')\to P(y,k')$ then there is a prime power $q_0$ such that for
every finite field $k$ with $q\geq q_0$ elements
$u^{P(y,k)}_{P(x,k)}=\frac{q^h-q^s}{q-1}=q^{s}(q^{h-s-1}+...+q+1),$
where $h=\dim_k\Hom(P(x,k),P(y,k))>s=\dim_k\Sing(P(x,k),P(y,k))$ are
field independent.
\end{proposition}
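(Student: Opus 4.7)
The plan is to combine three ingredients already available: Corollary 4.6 (to propagate the GR inclusion across every finite field), Lemma 4.7 (for the closed form involving $s_k$), and Hubery's Theorem 2.2 (which makes $u^{P(y,k)}_{P(x,k)}$ a single polynomial in $q$). First, Corollary 4.6 gives a GR inclusion $P(x,k)\to P(y,k)$ for every finite field $k$, so Lemma 4.7 applies uniformly to produce
\[ u^{P(y,k)}_{P(x,k)} = \frac{q^h - q^{s_k}}{q-1}, \qquad q = |k|, \]
with $h = \dim_k\Hom(P(x,k),P(y,k)) = \langle x, y\rangle$ field independent and $s_k \in \{0,1,\ldots,h-1\}$ \emph{a priori} field dependent. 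The task is to force $s_k$ to stabilize for $q$ large.

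For the polynomial structure of the left-hand side I would invoke Theorem 2.2. Any cokernel of a monomorphism $P(x,k)\hookrightarrow P(y,k)$ has fixed dimension vector $y - x$, so only finitely many decomposition symbols $\alpha$ can occur (the dimension bound controls both the discrete data of the non-homogeneous part and the Segre part). Partitioning the submodule count by the decomposition symbol of the cokernel and applying Hubery's theorem yields
\[ u^{P(y,k)}_{P(x,k)} = \sum_\alpha \sum_{A \in S(\alpha,k)} F^{P(y,k)}_{A, P(x,k)} = \sum_\alpha F^{(y,\emptyset)}_{\alpha,(x,\emptyset)}(q) =: U(q), \]
where the finite sum runs over the decomposition symbols of modules of dimension $y-x$, so $U \in \mathbb Q[X]$ is a fixed polynomial and $u^{P(y,k)}_{P(x,k)} = U(q)$ for every finite field $k$.

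Now I would compare the two descriptions. Set $P_s(X) := (X^h - X^s)/(X-1) = X^{h-1} + X^{h-2} + \ldots + X^s$ for $s \in \{0,\ldots,h-1\}$; these $h$ polynomials are pairwise distinct, as they are distinguished by their lowest-degree term. Both expressions for $u^{P(y,k)}_{P(x,k)}$ give $U(q) = P_{s_k}(q)$ for every prime power $q$. Since $s_k$ takes only finitely many values, pigeonhole produces some fixed $s$ with $U(q) = P_s(q)$ for infinitely many $q$, forcing the polynomial identity $U = P_s$ (two polynomials of degree at most $h-1$ agreeing at infinitely many points coincide). Choosing $q_0$ larger than every real root of each nonzero $P_s - P_{s'}$ with $s' \neq s$, for every prime power $q \geq q_0$ the equality $P_s(q) = P_{s_k}(q)$ then forces $s_k = s$. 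This identifies the field-independent value $s = \dim_k \Sing(P(x,k),P(y,k))$ for $|k| \geq q_0$ and gives the displayed formula.

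The main obstacle I foresee is the bookkeeping in the second step: Hubery's polynomials are indexed by decomposition symbols, not by isoclasses, so one has to sum over all $\alpha$ that arise as the decomposition symbol of some module of dimension $y - x$ and be certain this sum is finite before declaring $U$ a genuine polynomial. Once polynomiality is in hand the matching step is essentially forced by the distinctness of the $P_s$, and no further subtlety is required.
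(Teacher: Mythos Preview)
Your proof is correct and reaches the same conclusion as the paper, but the mechanism for stabilizing $s_k$ is different. The paper argues directly that $\dim_k\Sing(P(x,k),P(y,k))$ is monotone under finite field extensions $k\le k'$ (using the Section~2 convention that $P(x,k')$ is obtained from $P(x,k)$ by keeping the same matrices), hence bounded and eventually constant; the maximal value is then declared to be $s$, and the polynomial identity follows. You bypass this algebraic monotonicity entirely: pigeonhole on the finite range $\{0,\dots,h-1\}$ of $s_k$ already gives an infinite set of agreements with some $P_s$, hence $U=P_s$, and then distinctness of the $P_{s'}$ recovers $s_k=s$ for large $q$. Your route is slightly more elementary in that it needs nothing about how $\Sing$ behaves under base change, only the polynomiality from Hubery; the paper's route is a bit more informative, since it identifies $s$ as the maximum of the $s_k$ rather than just ``some value that occurs infinitely often.'' Your worry about finiteness of the sum $\sum_\alpha F^{(y,\emptyset)}_{\alpha,(x,\emptyset)}$ is warranted but easily dispatched: a decomposition symbol of fixed dimension $y-x$ has bounded non-homogeneous part and bounded Segre data, so only finitely many $\alpha$ contribute, exactly as the paper assumes without comment.
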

\begin{proof} By Theorem 2.2 there is
a rational polynomial $f=\sum_{\alpha} F^{(y,\emptyset)}_{\alpha\ (x,\emptyset)}$, where
$\alpha$ runs over all decomposition symbols of dimension $y-x$. Using facts from Section 2 one can see that there is a prime power $q_0$ such that $n_{\alpha}(q)\neq 0$ for all decomposition symbols $\alpha$ of dimension $y-x$ and $q\geq q_0$.

Since we have a GR inclusion $P(x,k')\to P(y,k')$ then by Corollary
4.6 we have a GR inclusion $P(x,k)\to P(y,k)$ for every finite field
$k$ and by Lemma 4.7
$u^{P(y,k)}_{P(x,k)}=q^{s_k}(q^{h-s_k-1}+...+q+1)$. Obviously we also have that $u^{P(y,k)}_{P(x,k)}=f(q)$ for a
finite field $k$ with $q\geq q_0$.

Knowing the
facts above, note that for an extension of finite fields $k\leq k'$
one trivially has
$\dim_k\Sing(P(x,k),P(y,k))\leq\dim_{k'}\Sing(P(x,k'),P(y,k'))\leq
h$. So it follows that $s_k$ stabilizes for large finite fields.
Denote the maximal stabilized value by $s$. We can conclude that
$u^{P(y,k)}_{P(x,k)}=q^{s}(q^{h-s-1}+...+q+1)$ for infinitely many
values $q$, but this means that $f=\frac{X^h-X^s}{X-1}$.
\end{proof}

Our last lemma is a classical lemma taken from \cite{CalChap} (see also \cite{sam}).
\begin{lemma} Let $X$ be a variety defined over some ring of algebraic integers. We
denote by $X(\mathbb C)$ (resp. $X(\mathbb F_q)$) the set of
$\mathbb C$-points (resp. $\mathbb F_q$-points) of $X$. Suppose
that there exists a polynomial $f$ with integral coefficients such
that $|X(\mathbb F_q)|=f(q)$ for infinitely many prime powers $q$.
Then the Euler-Poincar\'e characteristic (with compact support) of
$X(\mathbb C)$ is given by $\chi(X(\mathbb C))=f(1)$.
\end{lemma}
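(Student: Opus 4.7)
The plan is to deduce the statement from the Grothendieck--Lefschetz trace formula for $\ell$-adic \'etale cohomology, combined with Artin's comparison theorem between $\ell$-adic and singular cohomology of the complex points. After spreading $X$ out to a smooth model over an open subscheme of $\operatorname{Spec}\mathcal{O}$ (possible upon inverting finitely many primes of the given ring $\mathcal{O}$ of algebraic integers), the trace formula at a closed point with residue field $\mathbb{F}_q$ reads
\[
|X(\mathbb{F}_q)| \;=\; \sum_{i \geq 0} (-1)^i \operatorname{tr}\bigl(F_q \,\big|\, H^i_c(X_{\overline{\mathbb{F}}_q}, \mathbb{Q}_\ell)\bigr),
\]
where $F_q$ is the geometric Frobenius. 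By smooth base change and Artin's comparison, the $\mathbb{Q}_\ell$-dimension of each $H^i_c(X_{\overline{\mathbb{F}}_q}, \mathbb{Q}_\ell)$ agrees with the compactly supported Betti number $\dim_{\mathbb{Q}} H^i_c(X(\mathbb{C}), \mathbb{Q})$, and is in particular independent of the chosen prime of good reduction.

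The key step is then to exploit the hypothesis that $|X(\mathbb{F}_q)| = f(q)$ for infinitely many prime powers $q$. Writing the traces as symmetric functions of the Frobenius eigenvalues $\{\alpha_{i,j}\}$ on each $H^i_c$, one matches the right-hand side of the trace formula against $f(q)$ along an infinite sequence of $q$; since the eigenvalues are algebraic integers and the matching is a polynomial identity valid on an infinite set, it is an identity of rational functions. This forces the characteristic polynomial of $F_q$ on $H^i_c$ to split, after semisimplification, as a product of factors $T - q^{w}$ with integer weights $w$. Equivalently, each $H^i_c$ is a Tate object, decomposing as a sum of Tate twists $\mathbb{Q}_\ell(-w)^{\oplus m_{i,w}}$.

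Substituting this back, the trace formula collapses into the polynomial identity
\[
f(q) \;=\; \sum_{w \geq 0}\Biggl(\sum_{i \geq 0} (-1)^i m_{i,w}\Biggr) q^w.
\]
Evaluating at $q = 1$ gives
\[
f(1) \;=\; \sum_{i,w} (-1)^i m_{i,w} \;=\; \sum_{i \geq 0} (-1)^i \dim_{\mathbb{Q}_\ell} H^i_c(X_{\overline{\mathbb{F}}_q}, \mathbb{Q}_\ell),
\]
and the comparison theorem identifies this alternating sum with $\chi(X(\mathbb{C}))$, as desired.

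The main obstacle is the passage from the polynomial hypothesis to the Tate-type conclusion on Frobenius eigenvalues; this is the genuine arithmetic content and relies on the fact that eigenvalues are algebraic integers together with the rigidity of the trace formula. The remaining step, namely arranging good reduction so that the comparison and base change theorems apply, is a routine spreading-out argument that discards only finitely many primes. Both pieces are worked out in \cite{CalChap} and \cite{sam}, so in practice one simply invokes the result.
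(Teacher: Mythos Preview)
The paper gives no proof of this lemma; it is introduced as ``a classical lemma taken from \cite{CalChap} (see also \cite{sam})'' and then used without further argument. Your proposal ends the same way, by deferring to those references, so at the level of what the paper actually does you are in agreement.

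That said, the sketch you offer before deferring contains an overclaim. From the hypothesis that $|X(\mathbb F_q)|=f(q)$ for infinitely many $q$ you cannot conclude that each $H^i_c$ is of Tate type. The Frobenius eigenvalues depend on the chosen closed point, so matching the alternating trace against a fixed polynomial along an infinite set of $q$'s does not force the eigenvalues at any individual $q$ to be integral powers of $q$; your sentence ``since the eigenvalues are algebraic integers and the matching is a polynomial identity valid on an infinite set, it is an identity of rational functions'' conflates a family of identities indexed by $q$ with a single functional identity. What the Katz-type argument behind \cite{CalChap} actually yields, via Deligne's weight filtration and constancy of the compactly supported Betti numbers after spreading out, is that the $E$-polynomial of $X(\mathbb C)$ coincides with $f$, and hence $\chi_c(X(\mathbb C))=f(1)$. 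The intermediate Tate-type assertion is both stronger than needed and not a consequence of the stated hypotheses. Since you ultimately invoke the cited sources this does not affect the bottom line, but the sketch as written should not be read as a self-contained proof.
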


Using the lemma above one can prove the following:
\begin{corollary} If there is a GR inclusion $P(x,k)\to P(y,k)$ for a finite field $k$, then there is a
monomorphism $P(x,\mathbb C)\to P(y,\mathbb C)$.
\end{corollary}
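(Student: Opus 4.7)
The plan is to apply Lemma 4.10 to a suitable variety parametrizing the desired submodules, and extract non-emptiness of its $\mathbb{C}$-points from a non-vanishing Euler characteristic. The key bridge between the finite-field count and characteristic $0$ is Proposition 4.9, which provides the required polynomial on the nose.

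First I would set up the variety. Take integer matrices $A_\alpha$ giving a representation of $Q$ over $\mathbb{Z}$ whose base change to $\mathbb{Q}$ is $P(y,\mathbb{Q})$ (clearing denominators), and whose reduction mod $p$ agrees with $P(y,\mathbb{F}_p)$ for all sufficiently large primes $p$ by the conventions of Section~2. Inside the product of Grassmannians $\prod_{i\in Q_0}\mathrm{Gr}(x_i,y_i)$, defined over $\mathbb{Z}$, consider the locally closed subscheme $X$ cut out by the closed conditions $A_\alpha V_{s(\alpha)}\subseteq V_{e(\alpha)}$ (``subrepresentation'') together with the open condition that the restricted subrepresentation has one-dimensional endomorphism ring. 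Since $x$ is a positive real root, a subrepresentation of dimension $x$ has one-dimensional endomorphism algebra iff it is isomorphic to $P(x,-)$. Thus $X$ is a quasi-projective scheme over $\mathbb{Z}$ whose $K$-points, for any field $K$, parametrize the submodules of $P(y,K)$ isomorphic to $P(x,K)$.

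Next I would feed in the polynomial from Proposition~4.9. By Corollary~4.6, the GR inclusion $P(x,k)\to P(y,k)$ persists over every finite field, so by Proposition~4.9 there is an integer $q_0$ and integers $h>s\geq 0$ such that
\[
|X(\mathbb{F}_q)|=u^{P(y,\mathbb{F}_q)}_{P(x,\mathbb{F}_q)}=\frac{q^h-q^s}{q-1}
\]
for every prime power $q\geq q_0$. Setting $f(T)=\frac{T^h-T^s}{T-1}=T^s(T^{h-s-1}+\cdots+T+1)\in\mathbb{Z}[T]$, we have $|X(\mathbb{F}_q)|=f(q)$ for infinitely many prime powers. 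Lemma~4.10 then applies and yields
\[
\chi\bigl(X(\mathbb{C})\bigr)=f(1)=h-s\geq 1.
\]
Since a scheme with non-zero Euler characteristic (with compact support) of its complex points is necessarily non-empty, $X(\mathbb{C})\neq\emptyset$, and any point of $X(\mathbb{C})$ provides a submodule of $P(y,\mathbb{C})$ isomorphic to $P(x,\mathbb{C})$, i.e.\ a monomorphism $P(x,\mathbb{C})\to P(y,\mathbb{C})$.

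The main obstacle, and the step that deserves the most care, is the verification that $X$ is genuinely defined over a ring of algebraic integers and that its $\mathbb{F}_q$-point count really coincides with the Ringel--Hall count $u^{P(y,\mathbb{F}_q)}_{P(x,\mathbb{F}_q)}$ for almost all $q$. This depends on (i) the choice of integral matrices $A_\alpha$ reducing to the fixed models of $P(y,\mathbb{F}_p)$ from the conventions in Section~2 for $p\gg 0$, and (ii) the fact that for the real root $x$, isomorphism class membership to $P(x,-)$ among subrepresentations of dimension $x$ is an open condition (so $X$ is a scheme, not just a set). Once $X$ is in place, everything else is a straightforward application of Proposition~4.9 and Lemma~4.10.
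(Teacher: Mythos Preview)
Your proof is correct and follows essentially the same route as the paper: define a locally closed subscheme $X$ of the quiver Grassmannian parametrizing submodules of $P(y,-)$ isomorphic to $P(x,-)$ (via the open condition $\dim\End=1$), use Proposition~4.9 to get the polynomial count $|X(\mathbb{F}_q)|=(q^h-q^s)/(q-1)$ for large $q$, and apply Lemma~4.10 to conclude $\chi(X(\mathbb{C}))=h-s>0$, hence $X(\mathbb{C})\neq\emptyset$. The only cosmetic difference is that the paper invokes Ringel's result that preprojective indecomposables are tree modules, so one may choose $0$--$1$ matrices for $P(y,\mathbb{C})$ and obtain an integral model directly, rather than clearing denominators; this is a convenience, not a different idea.
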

\begin{proof} Due to Ringel (see \cite{rin3}) we know that $P(x,\mathbb C)$ and $P(y,\mathbb C)$ are tree modules, so we can suppose that their matrices corresponding to the arrows contain only 0 and 1. These representations exist also over $\mathbb Q$ and modulo $p$ (with the same matrices) in case $p$ is a big enough prime. We fix such a $p$ from now on.

Define $X=\{N\in\Mod \mathbb Q Q\ |\ N\leq P(y,\mathbb Q), N\cong P(x,\mathbb Q)\}$. Since we have that $X=\{N\in\Mod \mathbb Q Q\ |\ N\leq P(y,\mathbb Q),\  \underline{\dim}N=x,\ \dim_k{\End(N)}=1\}$, one can see that $X$ is a (locally closed) subvariety of the quiver Grassmannian $Gr_{x}(P(y,\mathbb Q))=\{N\in\Mod \mathbb Q Q\ |\ N\leq P(y,\mathbb Q),\ \underline{dim}N=x\}$. One can also see that $X$ has in fact a $\mathbb Z$-form (see \cite{sam}) and in this way $X(\mathbb F_{p^l})=\{N\in\Mod \mathbb F_{p^l} Q\ |\ N\leq P(y,\mathbb F_{p^l}),\ N\cong P(x,\mathbb F_{p^l})\}$

Since we have a GR inclusion $P(x,k)\to P(y,k)$ for a finite field $k$, then using Proposition 4.9 we obtain for $p$ big enough that $|X(\mathbb F_{p^l})|=u^{P(y,\mathbb F_{p^l})}_{P(x,\mathbb F_{p^l})}=p^{ls}(p^{l(h-s-1)}+...+p^l+1)$, so using Lemma 4.10 we have that $\chi(X(\mathbb C))\neq 0$ which means that there is a monomorphism
$P(x,\mathbb C)\to P(y,\mathbb C)$.
\end{proof}

Putting together all the pieces from above, we obtain the following theorem:
\begin{theorem} Consider two fields $k,k'$. Then for GR
measures we have $\mu(P(x,k))=\mu(P(x,k'))$, moreover we have a GR inclusion
$P(x,k)\to P(y,k)$ iff we have a GR inclusion $P(x,k')\to P(y,k')$.
\end{theorem}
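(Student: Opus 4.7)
The plan is to extend Corollary 4.6 from the ``same characteristic'' case to arbitrary pairs of fields. By that corollary the GR measure and the GR inclusion structure of preprojectives depend only on the characteristic of the base field, so it suffices to compare two canonical representatives (say $\mathbb{F}_p$ for each prime $p$ and $\mathbb{C}$ for characteristic $0$) and then invoke Corollary 4.6 to glue. Corollary 4.6 already supplies the inequality $\mu(P(x,\mathbb{C})) \leq \mu(P(x,\mathbb{F}_p))$; the task is to promote this to equality and then to match GR inclusions across the characteristic gap.

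For the reverse inequality, fix a chain of GR inclusions $P(x_1,\mathbb{F}_p) \to \dots \to P(x_t,\mathbb{F}_p) = P(x,\mathbb{F}_p)$ with $\mu(P(x,\mathbb{F}_p)) = \{|x_1|,\dots,|x_t|\}$. Such a chain exists because submodules of preprojectives are preprojective and the GR measure can be realised by iteratively taking GR submodules. Corollary 4.11 converts each step into a monomorphism $P(x_i,\mathbb{C}) \to P(x_{i+1},\mathbb{C})$, yielding a chain of monomorphisms over $\mathbb{C}$ with matching lengths $|x_i|$. This immediately gives $\mu(P(x,\mathbb{C})) \geq \{|x_1|,\dots,|x_t|\} = \mu(P(x,\mathbb{F}_p))$, so the measures agree; Corollary 4.6 then extends the equality to $\mu(P(x,k)) = \mu(P(x,k'))$ for all pairs of fields.

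For the GR inclusion equivalence, a GR inclusion $P(x,\mathbb{F}_p) \to P(y,\mathbb{F}_p)$ produces via Corollary 4.11 a monomorphism $P(x,\mathbb{C}) \to P(y,\mathbb{C})$, and the identity
$$\mu(P(y,\mathbb{C})) = \mu(P(y,\mathbb{F}_p)) = \mu(P(x,\mathbb{F}_p)) \cup \{|y|\} = \mu(P(x,\mathbb{C})) \cup \{|y|\}$$
(using the equality just established) says exactly that this monomorphism is a GR inclusion. Conversely, a GR inclusion $P(x,\mathbb{C}) \to P(y,\mathbb{C})$ descends via Lemma 4.1 to a monomorphism over the prime field $\mathbb{Q}$, which Lemma 4.5 then transports to a monomorphism $P(x,\mathbb{F}_p) \to P(y,\mathbb{F}_p)$ for all sufficiently large primes $p$; the same measure identity makes it a GR inclusion in $\mathbb{F}_p$, and Corollary 4.6 propagates this to every prime characteristic. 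Combining the two directions with Corollary 4.6 finishes the theorem.

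The conceptual point to recognise is that once the GR measure is field-independent, the GR inclusion condition $\mu(P(y,k)) = \mu(P(x,k)) \cup \{|y|\}$ becomes automatic as soon as one has a monomorphism $P(x,k) \to P(y,k)$ in the correct setting, so the whole argument reduces to transporting bare monomorphisms between characteristics. That transport is already assembled: Corollary 4.11 (based on the Euler-characteristic argument of Lemma 4.10) takes us from positive characteristic to $\mathbb{C}$, while Lemmas 4.1 and 4.5 take us the other way, so no further technical obstacle remains.
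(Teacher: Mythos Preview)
Your proof is correct and follows essentially the same route as the paper: reduce via Corollary 4.6 to comparing a prime characteristic with characteristic zero, use Corollary 4.11 to obtain the missing inequality $\mu(P(x,\mathbb{F}_p))\leq\mu(P(x,\mathbb{C}))$, and then transport monomorphisms across characteristics (Corollary 4.11 one way, Lemmas 4.1 and 4.5 the other) to match GR inclusions once the measures are known to agree. The paper's proof is terser but uses exactly the same ingredients in the same roles.
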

\begin{proof} Due to Corollary 4.6 it is enough to consider the case when $k$ has characteristic 0 and $k'$ characteristic $p$.
We know already that $\mu(P(x,k))\leq\mu(P(x,k'))$. Conversely, using Corollary 4.3 and 4.11 we have $\mu(P(x,k'))=\mu(P(x,\mathbb F_p))\leq\mu(P(x,\mathbb C))=\mu(P(x,\mathbb Q))=\mu(P(x,k))$. The second part of the statement follows in the same manner as in the proof of Corollary 4.3 and 4.6.
\end{proof}

\section{\bf GR measure of regular homogeneous modules}

Throughout this section $k$ is a field with enough elements, such that there exist points $a\in\mathbb H_k$ of degree 1. This means that $k$ has at least 3 elements in case the quiver is not of $\tilde A_n$ type. Recall that for a point $a\in\mathbb H_k$ of degree 1 we denote by $R^k_1(1,a)$ the corresponding (unique up to isomorphism) simple homogeneous regular module (having dimension $\delta$).

The aim of this section is to prove that the GR measure of $R^k_1(1,a)$ and also the GR inclusions in
$R^k_1(1,a)$ do not depend on $a$ and on the field $k$.

The first lemma follows from the proof of Theorem 4.4. in \cite{chen2}.
\begin{lemma}{\rm (\cite{chen2})} If $P$ is a preprojective indecomposable with $|P|<|\delta|$ and $R^k_1(1,a)$ a simple homogeneous regular, then there is a monomorphism $P\to tR^k_1(1,a)$ for some $t\in\mathbb N^*$, so $P$ is cogenerated by $R^k_1(1,a)$ and in this way $\mu(P)<\mu(R^k_1(1,a))$.
\end{lemma}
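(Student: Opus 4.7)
The strategy has two parts: first construct a monomorphism $P\hookrightarrow tR$ (with $R=R^k_1(1,a)$) via the canonical evaluation map, using the hypothesis $|P|<|\delta|$ to rule out regular summands in the image; then compare Gabriel-Roiter measures using the structure of indecomposable submodules of $tR$.

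Since $\Ext^1(P,R)=0$ by Lemma~2.1(a) and $\partial P<0$, the Euler form gives $\dim_k\Hom(P,R)=\langle\underline\dim P,\delta\rangle=-\partial P>0$. Set $t=-\partial P$, fix a basis $f_1,\dots,f_t$ of $\Hom(P,R)$ and form $\Phi:P\to tR$, $p\mapsto(f_1(p),\dots,f_t(p))$. To see that $K:=\ker\Phi=0$, observe that $P/K$ embeds in $tR$, so by Lemma~2.1(a) it has no preinjective summand, while by Lemma~2.1(b) every regular indecomposable summand of $P/K$ must lie in the tube of $R$, hence be of the form $R^k_1(n,a)$ with length $n|\delta|\ge|\delta|$. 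Since $|P/K|\le|P|<|\delta|$, no such summand exists, so $P/K$ is preprojective and $\Ext^1(P/K,R)=0$ by Lemma~2.1(a). Applying $\Hom(-,R)$ to $0\to K\to P\to P/K\to 0$ then produces a surjection $\Hom(P,R)\to\Hom(K,R)$; but every $f_i$ kills $K$ by construction, so this restriction is zero and $\Hom(K,R)=0$. If $K\neq 0$, then $K$ would be preprojective (as a submodule of the preprojective $P$) with some indecomposable summand $K'$ satisfying $\dim_k\Hom(K',R)=-\partial K'>0$, a contradiction. Hence $K=0$, $\Phi$ is a monomorphism, and $P$ is cogenerated by $R$.

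For $\mu(P)<\mu(R)$ I would argue by strong induction on $|P|$. The key structural observation is that every indecomposable submodule of $tR$ is either preprojective or isomorphic to $R$: a regular indecomposable submodule lies in the tube of $R$ by Lemma~2.1(b), and for $n\ge 2$ the module $R^k_1(n,a)$ has its submodule $R^k_1(1,a)$ inside the kernel of every map to $R$, so it cannot embed in $tR$. The base case is $P$ simple: composing $\Phi$ with a suitable projection $tR\to R$ yields an embedding $P\hookrightarrow R$ which is proper since $1=|P|<|\delta|=|R|$, so Lemma~3.1(a) gives $\mu(P)<\mu(R)$. For the inductive step, the chain realizing $\mu(P)$ embeds via $\Phi$ into $tR$; its intermediate terms are preprojective submodules of $tR$ of length $<|P|$, so by the inductive hypothesis they all satisfy $\mu<\mu(R)$. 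Combined with Theorem~3.2, this yields $\mu(P)\le\mu(R)$, and strict inequality follows because $|R|=|\delta|\in\mu(R)\setminus\mu(P)$.

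The main obstacle I expect is the final Gabriel-Roiter comparison: while the embedding $P\hookrightarrow tR$ follows from a fairly routine Euler-form and dimension argument, the passage to the strict inequality $\mu(P)<\mu(R)$ is more subtle; it hinges on the structural observation that no $R^k_1(n,a)$ with $n\ge 2$ embeds in $tR$, together with a careful inductive application of Theorem~3.2 to the chain realizing $\mu(P)$.
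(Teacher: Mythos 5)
Your first half --- the construction of the embedding $P\to tR$ --- is essentially identical to the paper's proof: the same kernel $K=\bigcap_{f:P\to R}\Ker f$, the same use of $|P/K|\le|P|<|\delta|$ together with Lemma 2.1 to see that $P/K$ is preprojective, the same vanishing $\Ext^1(P/K,R)=0$ and long exact sequence forcing $\Hom(K,R)=0$, and the same defect argument giving $K=0$. That part is correct.

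The gap is in your second half, in the sentence ``Combined with Theorem~3.2, this yields $\mu(P)\le\mu(R)$.'' Write $P'=P_{r-1}$ for the GR submodule of $P$, so $\mu(P)=\mu(P')\cup\{|P|\}$, and suppose inductively that $\mu(P')<\mu(R)$. Let $d$ be the smallest element of the symmetric difference $\mu(P')\,\triangle\,\mu(R)$, so $d\in\mu(R)$. If $d\le|P|$ one checks directly that $\mu(P)<\mu(R)$, and there Theorem~3.2 is not even needed. The problematic case is $d>|P|$: then $\mu(R)$ starts with $\mu(P')$ but $|P|\notin\mu(R)$, which by definition of the order means $\mu(R)<\mu(P)$. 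Theorem~3.2 does apply here, but all it produces is a monomorphism $P'\to R$; this tells you $\mu(P')<\mu(R)$ and $\mu(R)\ge\mu(P')\cup\{|\delta|\}$, both of which are perfectly consistent with $|P|\notin\mu(R)$, so no contradiction results. In other words, part (c) of Ringel's Main Property (the only part stated as Theorem~3.2) cannot, by itself plus induction on length, rule out $\mu(R)<\mu(P)$. What is actually needed is part (a)/(b) of the Main Property --- a monomorphism $X\to\bigoplus_iY_i$ between indecomposables forces $\mu(X)\le\max_i\mu(Y_i)$, with split injectivity in case of equality --- which Ringel proves by a simultaneous induction on all three parts and which is the standard (and here implicitly invoked) bridge from ``$P$ is cogenerated by $R$'' to ``$\mu(P)\le\mu(R)$''; strictness then follows from $|P|<|\delta|$ exactly as you say. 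The paper itself does not attempt to reprove this implication: its proof stops at the cogeneration statement and cites \cite{chen2} (ultimately \cite{rin4}) for the measure inequality. So either quote the first part of the Main Property explicitly, or supply its proof; as written, your inductive step does not close.

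Two minor remarks: your structural observation that no $R^k_1(n,a)$ with $n\ge2$ embeds in $tR$ is correct but unnecessary for your chain argument, since the intermediate terms are submodules of the preprojective module $P$ and hence preprojective by Lemma 2.1(a); and your variant of the kernel argument (restriction map is simultaneously zero and surjective) is an equivalent reformulation of the paper's statement that $\Hom(\pi,R)$ is an isomorphism.
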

\begin{proof} (\cite{chen2}) Note that $\dim_k\Hom(P,R^k_1(1,a))=-\partial P>0$ and consider $K=\cap_{f:P\to R^k_1(1,a)}\Ker f=\Ker f_1\cap\dots\cap\Ker f_t$ (since all modules are finite dimensional). For the corresponding short exact sequence $0\to K\to P \overset{\pi}\to P/K\to 0$ we have that $\Hom(\pi,R^k_1(1,a))$ is an isomorphism and there is a monomorphism $P/K\to P/\Ker f_1\oplus\dots\oplus P/\Ker f_t\cong \Ima f_1\oplus\dots\oplus\Ima f_t\hookrightarrow tR^k_1(1,a)$. So $P/K$ is cogenerated by $R^k_1(1,a)$ and since $|P/K|\leq |P|<|\delta|$ it follows (using Lemma 2.1) that $P/K$ is also preprojective. This implies that $\Ext^1(P/K,R^k_1(1,a))=0$.

Applying the functor $\Hom(-,R^k_1(1,a))$ we obtain the exact sequence $$0\to\Hom(P/K,R^k_1(1,a))\overset{\Hom(\pi,R^k_1(1,a))}\to\Hom(P,R^k_1(1,a))\to\Hom(K,R^k_1(1,a))\to$$$$\to\Ext^1(P/K,R^k_1(1,a))=0,$$
which implies that $\Hom(K,R^k_1(1,a))=0$, so $K=0$ (since $K$ is preprojective). This implies that $P\cong P/K$ is cogenerated by $R^k_1(1,a)$.
\end{proof}

Using the lemma above and Lemma 3.1 one gets the following proposition (see \cite{chen2}):

\begin{proposition}{\rm(\cite{chen2} Theorem 4.4 (3))} If $P$ is a preprojective indecomposable and $R^k_1(1,a)$ is a simple homogeneous regular (with $\deg a=1$) then $\mu(P)<\mu(R^k_1(1,a))$.
\end{proposition}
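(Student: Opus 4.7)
I would prove this by contradiction. Suppose $\mu(P)\geq\mu(R^k_1(1,a))$ for some preprojective indecomposable $P$, chosen with $|P|$ minimal. Lemma 5.1 forces $|P|\geq|\delta|$. Let $M$ be a GR submodule of $P$; it is an indecomposable preprojective with $|M|<|P|$ and $\mu(P)=\mu(M)\cup\{|P|\}$, and by minimality $\mu(M)<\mu(R^k_1(1,a))$.

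If $\mu(P)>\mu(R^k_1(1,a))$ strictly, then applying Lemma 3.1 b with $X=M$, $Y=R^k_1(1,a)$, $Z=P$ (using that $M\to P$ is a GR inclusion and $\mu(M)<\mu(R^k_1(1,a))<\mu(P)$) yields $|R^k_1(1,a)|>|P|$, i.e.\ $|\delta|>|P|$, contradicting $|P|\geq|\delta|$. If instead $\mu(P)=\mu(R^k_1(1,a))$, then since $\mu(R^k_1(1,a))\subseteq\{1,\ldots,|\delta|\}$ and $|P|\in\mu(P)=\mu(R^k_1(1,a))$, one is forced into $|P|=|\delta|$. This must be excluded by an arithmetic observation: the dimension vector $x$ of $P$ is a positive real root with $q(x)=1$, and if $|x|=|\delta|$ then $y:=x-\delta$ satisfies $|y|=0$ and $q(y)=q(x)=1$ (using $\delta\in\operatorname{rad}q$). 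A case-by-case verification across $\tilde A_n,\tilde D_n,\tilde E_6,\tilde E_7,\tilde E_8$ shows that the restriction of $q$ to the hyperplane $\{y\in\mathbb Z^n:|y|=0\}$ is positive definite and never represents $1$ on the integer lattice (for example, for $\tilde A_n$ the restriction takes values only in a multiple of $n+1$, and for $\tilde D_4$ with central sink orientation it equals $2y_0^2+\sum_{i\geq 1}y_i^2$ subject to $\sum_{i\geq 1}y_i=-y_0$, which visibly avoids $1$). This contradiction disposes of the equality case.

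The decisive step is the arithmetic exclusion of $|P|=|\delta|$; without it the equality case would permit $\mu(P)=\mu(R^k_1(1,a))$ and defeat strictness. The route through Lemma 3.1 b handles the strict-inequality case cleanly, but equality of GR measures blocks reapplication of Lemma 3.1 b (which requires strict $<$ on both sides), so the separate arithmetic argument on the Euler form is unavoidable. One could alternatively try to rule out the equality case using Lemma 3.2 to produce a GR inclusion $M\to R^k_1(1,a)$ parallel to $M\to P$, but the symmetry produced there does not yield a contradiction without the length obstruction.
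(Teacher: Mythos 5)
Your proof is correct, and it follows exactly the route the paper points to: the paper gives no written proof of this proposition, saying only that it follows from ``the lemma above'' (Lemma 5.1) and Lemma 3.1, and your minimal-counterexample argument is the natural way to assemble those two ingredients. The strict-inequality case via Lemma 3.1\,b) is handled correctly. The only place to tighten is the arithmetic exclusion of $|P|=|\delta|$, which you rightly flag as the decisive step but only actually verify for $\tilde A_n$ and $\tilde D_4$, leaving $\tilde D_n$ ($n>4$), $\tilde E_6$, $\tilde E_7$, $\tilde E_8$ as an assertion. No case analysis is needed: the integer vectors $y$ with ${\bf q}(y)=1$ are precisely the real roots of the affine root system attached to the underlying graph of $Q$ (see \cite{dlabrin}), and every real root is sign-coherent (a nonzero vector with all coordinates $\geq 0$ or all $\leq 0$), so $|y|\neq 0$; equivalently, writing a real root as $\alpha+n\delta$ with $\alpha$ a root of the underlying Dynkin diagram, one has $1\leq\bigl||\alpha|\bigr|\leq|\delta|-1$ because the highest root has height $|\delta|-1$, whence $|y|=|\alpha|+n|\delta|$ is never a multiple of $|\delta|$ and in particular never $0$. (Phrased on the original vector: no positive real root $x$ has $|x|=|\delta|$, which is what you need since $\underline\dim P$ is a positive real root.) With that uniform justification substituted for the case-by-case claim, your argument is complete; a minor simplification in the equality case is that $|P|=|\delta|$ follows at once from $|P|=\max\mu(P)=\max\mu(R^k_1(1,a))=|R^k_1(1,a)|$.
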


An important consequence of the previous section and the proposition above is:
\begin{theorem} Consider the fields $k,k'$ and the points $a\in\mathbb H_k$, $a'\in\mathbb H_{k'}$ of degree 1. Then we have that $\mu(R^k_1(1,a))=\mu(R^{k'}_1(1,a'))$.
\end{theorem}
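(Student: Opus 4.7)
My plan is to establish the formula
$$
\mu(R^k_1(1,a)) \;=\; M \cup \{|\delta|\},
$$
where $M$ is the maximum, in the total order on $\mathcal{P}(\mathbb{N}^*)$, of $\mu(P(x,k))$ as $x$ runs over positive real roots of $\mathbf{q}$ with $\partial x<0$ and $|x|<|\delta|$. The set of such $x$ depends only on the underlying graph of $Q$, and by Theorem 4.12 each individual $\mu(P(x,k))$ is field-independent; consequently $M$, and therefore $\mu(R^k_1(1,a))$, will be independent of both $k$ and the degree-$1$ point $a\in\mathbb{H}_k$.

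First I would let $P_0\subset R^k_1(1,a)$ denote the Gabriel--Roiter submodule, so that $\mu(R^k_1(1,a))=\mu(P_0)\cup\{|\delta|\}$, and show that $P_0$ is a preprojective indecomposable with $|P_0|<|\delta|$. By Lemma 2.1 a), no submodule of $R^k_1(1,a)$ can have a preinjective component. By Lemma 2.1 b), any indecomposable regular submodule of $R^k_1(1,a)$ must lie in the homogeneous rank-$1$ tube of $a$; but the modules in this tube have lengths $m|\delta|$ for $m\ge 1$, so only $R^k_1(1,a)$ itself can embed in $R^k_1(1,a)$, which rules out proper regular submodules. Hence every proper indecomposable submodule of $R^k_1(1,a)$ is preprojective, and in particular $\mu(P_0)\le M$.

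The main step will be the reverse inequality: every preprojective indecomposable $P$ with $|P|<|\delta|$ satisfies $\mu(P)\le\mu(P_0)$. Suppose for contradiction that $\mu(P)>\mu(P_0)$, and let $m$ be the smallest element of the symmetric difference $\mu(P)\mathbin{\triangle}\mu(P_0)$; by assumption $m\in\mu(P)$, and $m\le|P|<|\delta|$. For every $x<m$ the minimality of $m$ gives $x\in\mu(P)\iff x\in\mu(P_0)$, while $x<|\delta|$ gives $x\in\mu(P_0)\iff x\in\mu(R^k_1(1,a))$; so $\mu(P)$ and $\mu(R^k_1(1,a))$ agree below $m$. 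Moreover $m\notin\mu(R^k_1(1,a))$ since $m\notin\mu(P_0)$ and $m<|\delta|$. Thus $m$ is the smallest element of $\mu(P)\mathbin{\triangle}\mu(R^k_1(1,a))$ and lies in $\mu(P)$, forcing $\mu(P)>\mu(R^k_1(1,a))$ and contradicting Proposition 5.2. Combined with the previous step this yields $\mu(P_0)=M$ and hence the claimed formula.

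The only step that requires real representation-theoretic input is the identification of $P_0$ as a preprojective indecomposable; after that, the whole argument reduces to the elementary combinatorial comparison of Gabriel--Roiter measures using Proposition 5.2 and Theorem 4.12, with no further appeal to Lemma 5.1 or to the Main Property (Theorem 3.2).
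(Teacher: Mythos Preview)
Your proof is correct and follows essentially the same route as the paper: both arguments rest on Theorem 4.12 and Proposition 5.2 together with the fact (which you spell out and the paper leaves implicit) that every GR submodule of $R^k_1(1,a)$ is preprojective of length $<|\delta|$. You organize the comparison as an explicit field-independent formula $\mu(R^k_1(1,a))=M\cup\{|\delta|\}$, while the paper runs the identical inequalities as a contradiction argument, but the content is the same.
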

\begin{proof} Suppose that $\mu(R^k_1(1,a))<\mu(R^{k'}_1(1,a'))$ and denote by $P(x,k)$ and $P(x',k')$ the GR submodules of $R^k_1(1,a)$ and  $R^{k'}_1(1,a')$. So we have that $\mu(P(x,k))\cup\{|\delta|\}<\mu(P(x',k'))\cup\{|\delta|\}$ but this implies $\mu(P(x,k))<\mu(P(x',k'))$. On the other hand using the main theorem in the previous section and the proposition above $\mu(P(x',k'))=\mu(P(x',k))<\mu(R^k_1(1,a))=\mu(P(x,k))\cup\{|\delta|\}$. This contradicts $\mu(P(x,k))<\mu(P(x',k'))$ since all the lengths in $\mu(P(x,k))$ and $\mu(P(x',k'))$ are smaller then $|\delta|$.
\end{proof}

Using again Ringel-Hall polynomials (Theorem 2.2) and the results of Ringel (Proposition 3.3) we get
\begin{proposition} Consider a fixed field $k'$ with $q'$ elements and denote by $k$ an arbitrary field with $q$ elements. Let $a'\in\mathbb H_{k'}$, $a\in\mathbb H_{k}$ be points of degree 1. If we have a GR inclusion $P(x,k')\to R^{k'}_1(1,a')$ then we also have a GR inclusion $P(x,k)\to R^k_1(1,a)$. Moreover we have that
$u^{R^k_1(1,a)}_{P(x,k)}=F^{(\emptyset,((1),1))}_{(\delta-x,\emptyset) (x,\emptyset)}(q)=\frac{q^{-\partial x}-q^s}{q-1}$
where $-\partial x=\langle x,\delta\rangle>s=\dim_k\Sing(P(x,k),R^k_1(1,a))$ are
field independent and independent of $a$.
\end{proposition}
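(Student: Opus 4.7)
The plan is to mirror Proposition 4.9: combine Hubery's Ringel-Hall polynomials (Theorem 2.2) with Ringel's submodule-counting formula (Proposition 3.3), bridged by the field-independence of GR measures established in Theorems 4.12 and 5.3.

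First I would extract a nonzero Ringel-Hall polynomial. The cokernel of the assumed GR inclusion $P(x,k')\to R^{k'}_1(1,a')$ is indecomposable (a standard property of GR inclusions), of dimension $\delta-x$ and defect $-\partial x>0$. Since a direct Euler-form check gives $\mathbf{q}(\delta-x)=1$, the vector $\delta-x$ is a positive real root and the cokernel is therefore the unique preinjective indecomposable $I(\delta-x,k')$, carrying decomposition symbol $(\delta-x,\emptyset)$. Hence the Hubery polynomial $F:=F^{(\emptyset,((1),1))}_{(\delta-x,\emptyset)(x,\emptyset)}$ satisfies $F(q')\geq 1$ and is not identically zero.

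Next I would transfer the GR-measure equation to the new field. Theorems 4.12 and 5.3 together with the hypothesis give
$$\mu(R^k_1(1,a))=\mu(R^{k'}_1(1,a'))=\mu(P(x,k'))\cup\{|\delta|\}=\mu(P(x,k))\cup\{|\delta|\}$$
for every field $k$ with $|k|\geq 3$ and every degree-one $a\in\mathbb H_k$. So any monomorphism $P(x,k)\to R^k_1(1,a)$ is already a GR inclusion by definition, its cokernel is consequently indecomposable, and by dimension and defect must equal $I(\delta-x,k)$. Therefore in Hubery's summation $u^{R^k_1(1,a)}_{P(x,k)}=\sum_{\alpha}F^{(\emptyset,((1),1))}_{\alpha\,(x,\emptyset)}(q)$ only the symbol $\alpha=(\delta-x,\emptyset)$ contributes, giving $u^{R^k_1(1,a)}_{P(x,k)}=F(q)$.

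Finally I would read off the closed form. For all sufficiently large finite $k$ the nonvanishing of $F$ produces a GR inclusion, and Proposition 3.3 applies with $e=1$, $r=0$ (Lemma 2.1(c)) and $h=\dim_k\Hom(P(x,k),R^k_1(1,a))=\langle x,\delta\rangle=-\partial x$ (since $\Ext^1=0$ by Lemma 2.1(a)), so
$$u^{R^k_1(1,a)}_{P(x,k)}=\frac{q^{-\partial x}-q^{s_k}}{q-1},\qquad s_k=\dim_k\Sing(P(x,k),R^k_1(1,a)).$$
Arguing as in Proposition 4.9, for a finite extension $k\leq k''$ and the degree-one point of $\mathbb H_{k''}$ induced by $a$ via $\mathbb H_k=\mathbb H_{\mathbb Z}\otimes k$, base change embeds $\Sing$ over $k$ into $\Sing$ over $k''$, so $s_k\leq s_{k''}\leq h$ and $s_k$ stabilizes to some $s$. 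Matching values on infinitely many $q$ forces $F(X)=(X^{-\partial x}-X^{s})/(X-1)$, which is a strictly positive integer for every $q\geq 2$. Hence $u^{R^k_1(1,a)}_{P(x,k)}>0$ for every finite $k$ with $|k|\geq 3$, delivering the asserted GR inclusion everywhere.

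The main obstacle will be the stabilization of $s_k$: unlike Proposition 4.9 the codomain depends on a tube parameter $a$, so one must track compatibly the base change of degree-one points of $\mathbb H_k$ to $\mathbb H_{k''}$ and verify that the corresponding $\Sing$-spaces nest as claimed. Once this bookkeeping with $\mathbb H_{\mathbb Z}\otimes k$ is in place, the rest transcribes directly from Section 4.
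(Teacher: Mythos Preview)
Your argument is correct and tracks the paper's proof closely: extract the nonzero Hubery polynomial from the given GR inclusion, transfer the GR-measure equality via Theorems 4.12 and 5.3, apply Proposition 3.3 for large $q$, and then pin down the polynomial. You are also more explicit than the paper on why $u^{R^k_1(1,a)}_{P(x,k)}$ equals the single Hall number $F^{R^k_1(1,a)}_{I(\delta-x,k)\,P(x,k)}$ (every embedding is GR, hence has indecomposable cokernel $I(\delta-x,k)$).

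The one genuine difference is in the stabilization of $s_k$. You import the field-extension nesting argument from Proposition 4.9 and correctly flag that tracking the tube parameter $a$ under base change is the delicate point. The paper bypasses this entirely: once one knows that for all sufficiently large $q$ the polynomial $F$ satisfies $F(q)=\dfrac{q^{-\partial x}-q^{s_k}}{q-1}$ with $s_k\in\{0,1,\dots,-\partial x-1\}$, pigeonhole on this \emph{finite} range of possible $s_k$ already yields a single value $s$ occurring for infinitely many $q$, and hence a polynomial identity $F(X)=\dfrac{X^{-\partial x}-X^{s}}{X-1}$. No comparison of $\Sing$-spaces across field extensions, and no bookkeeping with $\mathbb H_{\mathbb Z}\otimes k$, is needed. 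So the ``main obstacle'' you anticipate simply does not arise in the paper's route; your approach works but is more laborious than necessary here.
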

\begin{proof} Since we have a GR inclusion $P(x,k')\to R^{k'}_1(1,a')$ then the factor is indecomposable (so it is isomorphic to $I(\delta-x,k')$) and $F^{R^{k'}_1(1,a')}_{I(\delta-x,k') P(x,k')}\neq 0$.
Using Theorem 2.2 there is a
rational polynomial $F^{(\emptyset,((1),1))}_{(\delta-x,\emptyset) (x,\emptyset)}$ such that for any field $k$
with $q$ elements and $a\in\mathbb H_k$ arbitrary of degree 1 we have $F^{(\emptyset,((1),1))}_{(\delta-x,\emptyset) (x,\emptyset)}(q)=F^{R^k_1(1,a)}_{I(\delta-x,k) P(x,k)}$.
Due to our condition $F^{(\emptyset,((1),1))}_{(\delta-x,\emptyset) (x,\emptyset)}$ is a nonzero polynomial so for $q$ big enough
$F^{(\emptyset,((1),1))}_{(\delta-x,\emptyset) (x,\emptyset)}(q)$ is nonzero. This implies the existence of a monomorphism $P(x,k)\to R^k_1(1,a)$ (when $q$ is big enough) which is also a GR inclusion since $\mu(R^k_1(1,a))=\mu(R^{k'}_1(1,a'))=\mu(P(x,k'))\cup\{|\delta|\}=\mu(P(x,k))\cup\{|\delta|\}$. So we can conclude (using Proposition 3.3) that for $q$ big enough $F^{(\emptyset,((1),1))}_{(\delta-x,\emptyset) (x,\emptyset)}(q)=\frac{q^{-\partial x}-q^{s_k}}{q-1}$ (with $s_k<-\partial x$), which means that there is a field independent value $s$ such that for infinitely many values $q$ we have $F^{(\emptyset,((1),1))}_{(\delta-x,\emptyset) (x,\emptyset)}(q)=\frac{q^{-\partial x}-q^s}{q-1}$. But then $F^{(\emptyset,((1),1))}_{(\delta-x,\emptyset) (x,\emptyset)}(q)=\frac{q^{-\partial x}-q^s}{q-1}$ for every $q$, so it is nonzero for $q=|k|$, where $k$ is an arbitrary field. 
\end{proof}

We end the section with the main result, showing that the GR inclusions in simple homogeneous regulars with dimension $\delta$ are field independent. This generalizes the first part of the previous proposition (so we have a second proof for it).
\begin{theorem} Consider two fields $k,k'$ and $a\in\mathbb H_{k}$, $a'\in\mathbb H_{k}'$ points of degree 1. Then we have a GR inclusion
$P(x,k)\to R^k_1(1,a)$ iff we have a GR inclusion $P(x,k')\to R^{k'}_1(1,a')$.
\end{theorem}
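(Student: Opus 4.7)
The statement is symmetric in $(k,a)$ and $(k',a')$, so it suffices to prove one implication; I will assume $P(x,k)\to R^k_1(1,a)$ is a GR inclusion and aim to construct a GR inclusion $P(x,k')\to R^{k'}_1(1,a')$. The first move is purely combinatorial: by Theorem 5.3, $\mu(R^k_1(1,a))=\mu(R^{k'}_1(1,a'))$, and by Theorem 4.12, $\mu(P(x,k))=\mu(P(x,k'))$; hence
$$\mu(R^{k'}_1(1,a')) \;=\; \mu(P(x,k'))\cup\{|\delta|\}.$$
Consequently any monomorphism $\varphi\colon P(x,k')\to R^{k'}_1(1,a')$ necessarily has its image realize the \emph{maximum} GR measure among proper indecomposable submodules of $R^{k'}_1(1,a')$, so $\varphi$ is automatically a GR inclusion. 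The entire problem reduces to producing some monomorphism $P(x,k')\to R^{k'}_1(1,a')$.

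\emph{Finite case.} When $k$ and $k'$ are both finite, Proposition 5.4 delivers the monomorphism directly from the hypothesis. This case also supplies the crucial quantitative input: the Ringel-Hall polynomial
$$F^{(\emptyset,((1),1))}_{(\delta-x,\emptyset)(x,\emptyset)}(q)=\frac{q^{-\partial x}-q^s}{q-1}$$
is nonzero for every finite $q\geq 2$.

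\emph{Descent from infinite $k$.} If $k$ is infinite, I would mirror the strategy of Lemmas 4.1 and 4.5: the representations $P(x,k)$, $R^k_1(1,a)$, and the monomorphism between them are determined by finitely many matrix entries, which together with $a$ generate a finitely generated subring $\Lambda\subset k$. Specializing $\Lambda$ at a maximal ideal with finite residue field produces a monomorphism over some $\mathbb F_q$; by the measure argument above, it is a GR inclusion provided the specialized module remains a simple homogeneous regular of dimension $\delta$.

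\emph{Lifting to infinite $k'$.} Once a GR inclusion over some finite field is available, Proposition 5.4 yields one over every finite field, and so the polynomial displayed above is identically $\frac{X^{-\partial x}-X^s}{X-1}$. Consider the locus $U$ of injective homomorphisms inside $\Hom(P(x,k'),R^{k'}_1(1,a'))\cong \mathbb A^h_{k'}$, where $h=\langle x,\delta\rangle$ is field independent. This is a Zariski open subvariety cut out by the non-vanishing of explicit polynomials in the matrix entries; its non-emptiness over every finite residue field forces $U$ to be non-empty as a scheme, and since $k'$ is infinite any non-empty Zariski open of $\mathbb A^h_{k'}$ contains a $k'$-rational point, giving the desired monomorphism. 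In characteristic zero one may alternatively go through $\mathbb C$ by invoking Lemma 4.10 and Corollary 4.11, exactly in the manner of the proof of Theorem 4.12.

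\emph{Main obstacle.} The finite-field transfer and the target-lifting step are both clean because the homomorphism spaces have field-independent dimension and an open condition is at stake. The delicate step is the descent from an infinite source $k$: the module $R^k_1(1,a)$ depends explicitly on the parameter $a$, and one must guarantee that after specialization the fibre remains a simple homogeneous regular labelled by a degree-one point of $\mathbb H_{\mathbb F_q}$ (rather than degenerating into a non-homogeneous tube or decomposing). Handling this parameter-dependence uniformly, i.e.\ packaging the pair (simple regular, inclusion) into a scheme over a base that interpolates between $k$ and $\mathbb F_q$, is the technical heart of the proof beyond the machinery already in place for Proposition 5.4.
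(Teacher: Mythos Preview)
Your opening reduction is exactly right and matches the paper: from Theorem~4.12 and Theorem~5.3 one gets $\mu(R^{k'}_1(1,a'))=\mu(P(x,k'))\cup\{|\delta|\}$, so any monomorphism $P(x,k')\to R^{k'}_1(1,a')$ is automatically a GR inclusion. The divergence comes in how to produce that monomorphism.

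The paper never leaves the field $k'$. Since a GR submodule is proper, $|x|<|\delta|$, so Lemma~5.1 applies over $k'$ and gives a monomorphism $P(x,k')\hookrightarrow tR^{k'}_1(1,a')$ for some $t$. The equality $\mu(R^{k'}_1(1,a'))=\mu(P(x,k'))\cup\{|\delta|\}$ says in particular that $\mu(R^{k'}_1(1,a'))$ \emph{starts with} $\mu(P(x,k'))$, so Theorem~3.2 (the Main Property) forces one of the projections $P(x,k')\to R^{k'}_1(1,a')$ to be injective. Done.

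Your route via reduction to finite fields and algebraic geometry is much heavier, and the step you yourself flag as the main obstacle --- specializing $R^k_1(1,a)$ so that the fibre remains a simple homogeneous regular at a degree-one point --- is a genuine gap as written, not merely a technicality. The point is that this obstacle is entirely self-inflicted: Lemma~5.1 plus Theorem~3.2 manufacture the monomorphism \emph{directly over the target field}, with no need to transport anything between characteristics. The field-independence of the GR measures, which you have already used, is precisely what makes the ``starts with'' hypothesis of Theorem~3.2 available over $k'$.
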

\begin{proof}
Suppose we have a GR inclusion $P(x,k')\to R^{k'}_1(1,a')$. This means using our previous results that $\mu(R^k_1(1,a))=\mu(R^{k'}_1(1,a'))=\mu(P(x,k'))\cup\{|\delta|\}=\mu(P(x,k))\cup\{|\delta|\}$. So one can see that $\mu(R^k_1(1,a))$ starts with $\mu(P(x,k))$.

Since $|P(x,k)|=|P(x,k')|<|\delta|$ it follows by Lemma 5.1 that there is a monomorphism $P(x,k)\to tR^k_1(1,a)$ for some $t\in\mathbb N^*$. But then using Theorem 3.2 it follows that there is a monomorphism $P(x,k)\to R^k_1(1,a)$. However $\mu(R^k_1(1,a))=\mu(P(x,k))\cup\{|\delta|\}$ so this monomorphism is a GR inclusion.
\end{proof}

\section{\bf Reflection functors and Ringel-Hall numbers}
In this section we suppose that the quiver $Q$ is of type $\tilde D_n,\tilde E_6, \tilde E_7,\tilde E_8$ (so it is a tree) and the base field $k$ has at least 3 elements (the condition needed to have points $a\in\mathbb H_k$ of degree 1).

For a detailed description of the forthcoming notions we refer to
\cite{assem},\cite{aus},\cite{dlabrin} and \cite{rin1}.

Let $i$ be a sink (or a source) in the quiver $Q$. Denote by $s_i$ the reflection induced by the vertex $i$ and $\sigma_i Q$ the quiver obtained by reversing all arrows involving $i$.
Consider $\Mod kQ\langle i\rangle$, the full subcategory of modules not containing the simple $S_i$ (corresponding to the vertex $i$) as a direct summand. The
following lemma summarizes some facts on the reflection functors $S^+_i:\Mod kQ\to\Mod k\sigma_iQ$ and  $S^-_i:\Mod k\sigma_i Q\to\Mod kQ$ (see \cite{dlabrin}):
\begin{lemma} {\rm a)} For $M\in\Mod kQ$ indecomposable we have $S^+_iM\neq 0$ iff $M\ncong S_i$. Moreover in this case $S^+_iM$ is indecomposable and $\underline\dim S^+_iM=s_i(\underline\dim M)$. For $M\in\Mod k\sigma_iQ$ indecomposable we have $S^-_iM\neq 0$ iff $M\ncong S_i$. Moreover in this case $S^-_iM$ is indecomposable and $\underline\dim S^-_iM=s_i(\underline\dim M)$.

{\rm b)} The functors $S^+_i,S^-_i$ induce quasi-inverse equivalences between $\Mod kQ\langle i\rangle$ and $\Mod k\sigma Q\langle i\rangle$.

{\rm c)} If $i$ is a sink, $M,M'\in\Mod kQ$ are indecomposable modules and $S^+_iM'\neq 0$ then $S^+_i$ induces an isomorphism $\Ext^1(M,M')\to\Ext^1(S^+_iM,S^+_iM')$. Dually, if $i$ is a source, $M,M'\in\Mod kQ$ are indecomposable modules and $S^-_iM\neq 0$ then $S^-_i$ induces an isomorphism $\Ext^1(M,M')\to\Ext^1(S^-_iM,S^-_iM')$
\end{lemma}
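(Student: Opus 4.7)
The plan is to make the Bernstein--Gelfand--Ponomarev construction explicit and then verify the three assertions in order. For a sink $i$ and $M=(V_j,V_\alpha)\in\Mod kQ$, assemble the incoming arrows at $i$ into a single $k$-linear map $\mu_i^M:\bigoplus_{\alpha:j\to i}V_j\to V_i$; the functor $S_i^+$ sends $M$ to the $k\sigma_iQ$-representation obtained by setting the $i$-th component to $\Ker\mu_i^M$, keeping the other components of $M$, and taking the coordinate projections from $\Ker\mu_i^M$ as the reversed arrows at $i$. The functor $S_i^-$ at a source is defined dually using cokernels.

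For part (a), direct inspection gives $S_i^+S_i=0$. Conversely, if $M$ is indecomposable with $M\not\cong S_i$, then $\mu_i^M$ must be surjective: otherwise, choosing a complement $W$ of $\Ima\mu_i^M$ in $V_i$ yields a decomposition $M\cong N_1\oplus N_2$ with $N_2$ concentrated at $i$ and isomorphic to a nonzero sum of copies of $S_i$, contradicting indecomposability. Surjectivity then gives $\dim_k\Ker\mu_i^M=\sum_{\alpha:j\to i}\dim_kV_j-\dim_kV_i$, which is exactly the $i$-th coordinate of $s_i(\underline\dim M)$; the other coordinates coincide with those of $\underline\dim M$, matching the action of $s_i$. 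Indecomposability of $S_i^+M$ will follow from part (b). For part (b), surjectivity of $\mu_i^M$ on $M\in\Mod kQ\langle i\rangle$ identifies $V_i$ naturally with the cokernel of $\Ker\mu_i^M\hookrightarrow\bigoplus V_j$, which is precisely the formula producing $S_i^-(S_i^+M)$ at the source $i$ of $\sigma_iQ$; this gives a natural isomorphism $S_i^-S_i^+\cong\operatorname{id}$ on $\Mod kQ\langle i\rangle$, and the dual argument yields $S_i^+S_i^-\cong\operatorname{id}$ on $\Mod k\sigma_iQ\langle i\rangle$, completing the equivalence and forcing $S_i^+M$ to be indecomposable.

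Part (c) is the main obstacle, since $S_i^+$ is not exact on the whole module category. The plan is to realize $\Ext^1$ as equivalence classes of short exact sequences and to apply $S_i^+$ termwise. A class $0\to M'\to E\to M\to 0$ produces a commutative diagram whose top row is the direct sum of incoming-arrow spaces for the three modules, connected by the maps $\mu_i^{M'},\mu_i^E,\mu_i^M$ to the evaluated sequence at $i$ on the bottom row. The hypothesis $S_i^+M'\neq 0$ combined with indecomposability of $M'$ forces $\mu_i^{M'}$ to be surjective, and the snake lemma then produces the short exact sequence $0\to\Ker\mu_i^{M'}\to\Ker\mu_i^E\to\Ker\mu_i^M\to 0$ whenever $\mu_i^M$ is surjective, i.e.\ unless $M\cong S_i$. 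The remaining degenerate case $M\cong S_i$ is handled by the standard vanishing $\Ext^1(S_i,M')=0$, coming from the projective resolution $0\to\bigoplus_{\alpha:j\to i}P(j)\to P(i)\to S_i\to 0$ at the sink, together with $S_i^+M=0$. The resulting map $S_i^+:\Ext^1(M,M')\to\Ext^1(S_i^+M,S_i^+M')$ then admits $S_i^-$ as a two-sided inverse by part (b), yielding the claimed isomorphism; the dual statement for $S_i^-$ at a source is entirely analogous.
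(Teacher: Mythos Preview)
The paper does not prove this lemma; it merely records the statement and refers the reader to Dlab--Ringel. Your argument is the standard Bernstein--Gelfand--Ponomarev proof and is essentially correct.

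Two small remarks on part (c). First, in the snake-lemma step only the surjectivity of $\mu_i^{M'}$ (which you already have from $M'\not\cong S_i$) is needed to make the kernel sequence short exact; the additional condition you impose on $\mu_i^M$ is redundant there, though your separate treatment of the case $M\cong S_i$ does no harm. Second, your projective resolution of $S_i$ has the arrows reversed: the radical of $P(i)$ is $\bigoplus_{\alpha:i\to j}P(j)$, summed over arrows \emph{leaving} $i$, and this is zero at a sink. Hence at a sink one has $S_i=P(i)$ outright, so $\Ext^1(S_i,M')=0$ follows immediately without any nontrivial resolution.
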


The following proposition is a consequence of the properties above:

\begin{proposition} {\rm a)} Let $i$ be a sink, $M\in\Mod kQ$ indecomposable such that $S^+_iM\neq 0$. Then $\partial S^+_iM=\partial M$. Dually if $i$ is a source, $M\in\Mod kQ$ indecomposable such that $S^-_iM\neq 0$. Then $\partial S^-_iM=\partial M$.

{\rm b)} If $i$ is a sink and $M,N,L\in\Mod kQ\langle i\rangle$ then
$F^L_{MN}=F^{S^+_iL}_{S^+_iM\ S^+_iN}$. Dually if $i$ is a source and $M,N,L\in\Mod kQ\langle i\rangle$ then
$F^L_{MN}=F^{S^-_iL}_{S^-_iM\ S^-_iN}$.

{\rm c)} If $i$ is a sink and $R$ a simple homogeneous (respectively non-homogeneous) regular then so is $S^+_iR$. A dual statement is true when $i$ is a source.
\end{proposition}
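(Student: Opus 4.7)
The plan is to address the three parts in order, each using only the information contained in Lemma 6.1 together with the description of the bilinear form in Section 1.

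For part (a), I will use that $\delta$ generates the radical of the symmetric Euler form $(x,y)=\langle x,y\rangle+\langle y,x\rangle$. The simple reflection $s_i$ is an isometry of this symmetric form and fixes every vector in its radical, so $s_i(\delta)=\delta$. Combined with $\underline\dim S_i^+M = s_i(\underline\dim M)$ from Lemma 6.1(a) and the identity $\partial x=\langle\delta,x\rangle=-\langle x,\delta\rangle$, a short computation with the symmetric form yields $\langle\delta,s_i(\underline\dim M)\rangle=\langle\delta,\underline\dim M\rangle$, i.e.\ $\partial S_i^+M = \partial M$. The case of a source is dual.

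For part (b), my plan is to exhibit an explicit bijection between the two sets counted by the Ringel-Hall numbers. Given a submodule $U\subseteq L$ with $U\cong N$ and $L/U\cong M$, all three terms lie in $\Mod kQ\langle i\rangle$ by assumption. Invoking the standard fact that for a sink $i$ the functor $S_i^+$ sends a short exact sequence whose terms are all in $\Mod kQ\langle i\rangle$ to a short exact sequence in $\Mod k\sigma_iQ\langle i\rangle$ (this is essentially a consequence of Lemma 6.1(b), (c) applied componentwise), I obtain $0\to S_i^+U\to S_i^+L\to S_i^+M\to 0$, exhibiting $S_i^+U$ as a submodule of $S_i^+L$ of the required isomorphism type. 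The inverse bijection is provided by applying $S_i^-$, using the quasi-inverse equivalence of Lemma 6.1(b). The equality $F^L_{MN}=F^{S_i^+L}_{S_i^+M\,S_i^+N}$ follows.

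For part (c), note first that when $i$ is a sink the simple $S_i=P(i)$ is projective, hence preprojective; consequently every indecomposable regular lies in $\Mod kQ\langle i\rangle$, and so $S_i^+R\neq 0$ is defined. By part (a) its defect is still $0$, so it is regular. To identify the tube I will use that the restriction of $S_i^+$ to the regular subcategory is an equivalence (being the restriction of the equivalence in Lemma 6.1(b)), so it preserves all intrinsic structural features: the decomposition of the regular part into tubes, the rank of each tube, and the property of being simple regular (namely being an indecomposable regular with no proper regular submodule, equivalently living on the mouth of a tube). Thus $S_i^+R$ is again simple regular in a tube of the same rank, whence simple homogeneous (resp.\ non-homogeneous) whenever $R$ is.

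The main obstacle I anticipate is in part (b), where one must verify that the reflection functor genuinely yields a bijection on submodule lattices of the prescribed isomorphism types; this requires care that no hidden $S_i$-summand can appear in either the sub- or the quotient module, and it is precisely here that parts (b) and (c) of Lemma 6.1 are used in tandem to transport short exact sequences between the two module categories without loss of information.
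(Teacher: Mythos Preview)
Your argument is essentially correct, and for parts (b) and (c) it is exactly what the paper does: the paper simply writes ``b) and c) follow from Lemma 6.1'' and leaves the reader to unpack the bijection on submodule lattices and the preservation of tube structure, just as you do. Your discussion of the potential obstacle in (b) is on point; since $M,N\in\Mod kQ\langle i\rangle$ by hypothesis, the sub- and quotient modules in question automatically avoid $S_i$, so the equivalence of Lemma 6.1(b) transports short exact sequences back and forth.

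For part (a) you take a genuinely different route. The paper argues categorically: it picks a regular indecomposable $R$ with $\underline\dim R=\delta$, writes $\partial M=\dim_k\Hom(R,M)-\dim_k\Ext^1(R,M)$, and observes that both dimensions are preserved under the equivalence of Lemma 6.1(b),(c), while $\underline\dim S_i^+R=s_i(\delta)=\delta$ shows $S_i^+R$ is again regular of dimension $\delta$. Your approach is purely combinatorial via the root lattice, and this is perfectly legitimate, but one point needs to be made explicit: the defect $\partial S_i^+M$ is computed with the Euler form of $\sigma_iQ$, not that of $Q$, and these differ. In general $\langle\delta,s_i(x)\rangle_Q\neq\langle\delta,x\rangle_Q$ (take the Kronecker quiver), so the equality you write cannot be read with a single bilinear form. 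What makes your argument work is the standard fact that $s_i$ is an isometry from $(\mathbb Z^n,\langle-,-\rangle_Q)$ to $(\mathbb Z^n,\langle-,-\rangle_{\sigma_iQ})$; combined with $s_i(\delta)=\delta$ (which does follow, as you say, from $\delta$ lying in the radical of the symmetrised form), this gives $\langle\delta,s_i(x)\rangle_{\sigma_iQ}=\langle s_i\delta,s_i x\rangle_{\sigma_iQ}=\langle\delta,x\rangle_Q$. The paper's categorical approach has the virtue of sidestepping this bookkeeping entirely; your approach has the virtue of not needing any module $R$ of dimension $\delta$.
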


\begin{proof} a) Let $R$ be a regular indecompoable in $\Mod kQ$ with $\underline\dim R=\delta$. Then $\partial M=\langle\delta,\underline\dim M\rangle=\dim_k\Hom(R,M)-\dim_k\Ext^1(R,M)$. But since $S^+_iR,S^+_iM\neq 0$, using the previous lemma it follows that $\dim_k\Hom(R,M)=\dim_k\Hom(S^+_iR,S^+_iM)$ and $\dim_k\Ext^1(R,M)=\dim_k\Ext^1(S^+_iR,S^+_iM)$ and also $\underline\dim S^+_iR=s_i(\dim R)=s_i(\delta)=\delta$.

b) and c) follow from the Lemma 6.1.
\end{proof}

Let $Q$ be a tree and $i$ a vertex in $Q$. Then we will denote by $Q_i$ the tree having the same underlying graph as $Q$ with all its edges pointing towards $i$. So $i$ is the unique sink in $Q_i$.

The next lemma can be proved using the method from \cite{assem}, Chapter VII, Lemma 5.2.
\begin{lemma} Let $i$ be a sink in the tree $Q$. Denote by $N_i$ the set of neighbours of $i$. Then there exists a sequence $i_1,...,i_t$ of vertices of $Q$ different from $i$ and not in $N_i$ such that for each $s\in\{1,\dots,t\}$ the vertex $i_s$ is a sink in $\sigma_{i_{s-1}}\dots\sigma_{i_1}Q$ and $\sigma_{i_{t}}\dots\sigma_{i_1}Q=Q_i$.
\end{lemma}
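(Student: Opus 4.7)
The plan is to reduce the lemma to a sub-claim about subtrees of $Q$ and prove the sub-claim by induction on the number of vertices. Since $i$ is a sink, every edge incident to $i$ already points toward $i$, matching its orientation in $Q_i$; a reflection at any vertex outside $\{i\}\cup N_i$ does not touch these edges, as no such vertex is adjacent to $i$. Removing $i$ splits the tree $Q$ into connected components $T_j$ indexed by $j\in N_i$. A vertex $v\in T_j\setminus\{j\}$ is at distance at least $2$ from $i$, hence lies outside $\{i\}\cup N_i$, and all edges at $v$ lie inside $T_j$, so $v$ is a sink of $Q$ iff it is a sink of $T_j$. Thus it suffices to establish the sub-claim: for a tree $T$ with a chosen vertex $r$ and any initial orientation, some sequence of reflections at sinks $\neq r$ of the successive orientations orients every edge of $T$ toward $r$. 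The lemma then follows by concatenating such sequences for each $T_j$ (with $r=j$).

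To prove the sub-claim, I would induct on $|T|$. The case $|T|\leq 1$ is vacuous. For $|T|\geq 2$, pick a leaf $v\neq r$ of $T$ (which exists because a tree with at least two vertices has at least two leaves) and let $w$ be its unique neighbor. If the edge $(v,w)$ points $w\to v$, reflect first at $v$ (a sink, as a leaf with an incoming edge); so we may assume the edge points $v\to w$. Apply the inductive hypothesis to $T\setminus\{v\}$ to obtain a sequence $\sigma=(i_1,\dots,i_m)$ of reflections at sinks $\neq r$ orienting $T\setminus\{v\}$ toward $r$. The idea is to run $\sigma$ in $T$, inserting extra reflections at $v$ whenever needed to preserve the sink property of $w$.

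Specifically, any $i_s\neq w$ has the same edges in $T$ as in $T\setminus\{v\}$ and so remains a sink. Before the first scheduled reflection at $w$ the edge $(v,w)$ still points $v\to w$ (incoming to $w$), so $w$ is a sink in $T$ and that reflection succeeds, flipping the edge to $w\to v$. Before each subsequent reflection at $w$, insert an extra reflection at $v$ (a sink at that moment, since the edge is incoming to $v$); this restores the edge to $v\to w$ and makes $w$ a sink in $T$ again. When $\sigma$ is exhausted, the edge $(v,w)$ ends up as $v\to w$ if $w$ never appeared in $\sigma$, and as $w\to v$ otherwise; in the latter case append one last reflection at the sink $v$ to bring it to $v\to w$. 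The inserted $v$-reflections do not touch $T\setminus\{v\}$, so its restriction is still oriented toward $r$, and the remaining edge $(v,w)$ is now correctly oriented.

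The main obstacle is exactly this bookkeeping for the edge $(v,w)$ and the sink property of $w$: each reflection at $w$ flips $(v,w)$, so $w$'s sink status in the full tree $T$ can diverge from its status in $T\setminus\{v\}$ at later stages of $\sigma$. The insertion trick works because $v$ is a leaf: the edge $(v,w)$ being outgoing from $w$ is the same as it being incoming to $v$, so $v$ is automatically a sink at the inserted step and no other condition at $v$ needs to be checked.
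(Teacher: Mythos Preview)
Your proof is correct, but it takes a genuinely different route from the paper's.

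The paper flips one wrongly-oriented edge at a time. Given a wrong edge $\alpha:j\to l$ (with $l$ on the far side from $i$), it deletes $\alpha$ to split $Q$ into $Q^j\ni i$ and $Q^l\ni l$, takes an admissible (sink) ordering $1,\dots,m$ of the whole subtree $Q^l$, and reflects at these vertices in order. Each edge inside $Q^l$ is touched exactly twice (once at each endpoint) and hence is unchanged, while $\alpha$ is touched exactly once (at $l$) and so is reversed; nothing in $Q^j$ is touched at all. Since every vertex of $Q^l$ reaches $i$ only through $\alpha$, none of them lies in $\{i\}\cup N_i$. Iterating over the wrong edges finishes.

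By contrast, you first decompose $Q\setminus\{i\}$ into the branches $T_j$ and reduce to the sub-claim that any tree can be oriented toward a prescribed root using sink reflections away from that root; you prove this by leaf removal, lifting the inductive sequence on $T\setminus\{v\}$ to $T$ with the bookkeeping trick of inserting a reflection at $v$ before each subsequent scheduled reflection at its neighbour $w$. The paper's argument is shorter and more conceptual---it leans on the existence of admissible orderings and a parity count---while yours is more hands-on but entirely self-contained and makes the algorithm explicit. Either approach is perfectly adequate here.
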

\begin{proof}
Let $E$ be the set of edges in $Q$ not pointing towards $i$. For $\alpha\in E$ let us denote by $Q_{\alpha}$ the quiver obtained by reversing the arrow $\alpha$ in $Q$. One can see that it is enough to prove that there exists a sequence $i_1,...,i_t$ of vertices of $Q$ different from $i$ and not in $N_i$ such that for each $s\in\{1,\dots,t\}$ the vertex $i_s$ is a sink in $\sigma_{i_{s-1}}\dots\sigma_{i_1}Q$ and $\sigma_{i_{t}}\dots\sigma_{i_1}Q=Q_{\alpha}$.
Suppose $\alpha:j\to l$. Then we have $j\neq i$. Consider the (not connected) quiver $Q''$ obtained from $Q$ by deleting the edge $\alpha$. Then $Q''=Q^j\cup Q^l$ where $Q^j$ and $Q^l$ are connected maximal subquivers of $Q''$ containing $j$ and $l$, respectively. Because $Q^l$ is a tree, there is an admissible numbering of its vertices $Q^l_0=\{1,...,m\}$. Then $1,...,m$ is a vertex sequence with the required properties.
\end{proof}

We will need one further lemma concerning some special Ringel-Hall numbers. This follows from Theorem 2.2.
\begin{lemma} Let $Q$ be quiver of type $\tilde D_n,\tilde E_6, \tilde E_7,\tilde E_8$ and suppose that the base field $k$ has at least 3 elements (the condition needed to have points $a\in\mathbb H_k$ of degree 1). Consider the following indecomposables in $\Mod kQ$: two simple homogeneous regular modules $R^k_1(1,a)$ and $R^k_1(1,a')$ where $a,a'\in\mathbb H_k$ are points of degree 1, an indecomposable preprojective $P(x,k)$ of defect $d$ and the indecomposable preinjective $I(\delta-x,k)$ of defect $-d$. In this case $F^{R^k_1(1,a)}_{I(\delta-x,k) P(x,k)}=F^{R^k_1(1,a')}_{I(\delta-x,k) P(x,k)}=F^{(\emptyset,((1),1))}_{(\delta-x,\emptyset) (x,\emptyset)}(q)$.
\end{lemma}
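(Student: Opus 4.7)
The plan is to derive the statement as a direct specialization of Hubery's Theorem 2.2, without invoking the reflection functors introduced earlier in the section. The key observation is that each of the three modules appearing in the Ringel-Hall numbers admits a field-independent description via its decomposition symbol, and two of these symbols will pin down a single isomorphism class.

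First I would identify the three relevant decomposition symbols, exactly as recorded at the end of Section 2. Set $\alpha=(\delta-x,\emptyset)$ and $\beta=(x,\emptyset)$, so that $S(\alpha,k)=\{I(\delta-x,k)\}$ and $S(\beta,k)=\{P(x,k)\}$ are singletons: a positive real root with the prescribed defect determines the preinjective (respectively preprojective) indecomposable uniquely up to isomorphism. Set $\gamma=(\emptyset,((1),1))$, so that $S(\gamma,k)=\{R^k_1(1,a)\mid a\in\mathbb H_k,\ \deg a=1\}$ is precisely the set of isomorphism classes of simple homogeneous regular modules of dimension $\delta$ over $k$. The hypothesis that $|k|\geq 3$ and $Q$ is not of type $\tilde A_n$ enters only to guarantee that this set is non-empty.

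Next I would apply Theorem 2.2 to the triple $(\alpha,\beta,\gamma)$: there is a rational polynomial $F^{\gamma}_{\alpha\beta}$ such that for every finite field $k$ with $|k|=q$ and every $C\in S(\gamma,k)$,
$$F^{\gamma}_{\alpha\beta}(q)=\sum_{A\in S(\alpha,k),\; B\in S(\beta,k)}F^{C}_{AB}=F^{C}_{I(\delta-x,k)\,P(x,k)},$$
the double sum collapsing to a single term because $S(\alpha,k)$ and $S(\beta,k)$ are singletons. The crucial point is that the left-hand side depends neither on the choice of $C$ nor on the point $a\in\mathbb H_k$ indexing it; in particular, specializing $C$ once to $R^k_1(1,a)$ and once to $R^k_1(1,a')$ gives the chain of equalities in the statement.

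I do not foresee a genuine obstacle here: once the three decomposition symbols are correctly identified, the lemma is nothing more than the specialization of Hubery's existence theorem to the case in which two of the three symbols pick out a unique isomorphism class while the third encodes the simple homogeneous regulars of dimension $\delta$. The only mild care required is to verify that the decomposition class of a simple homogeneous regular of dimension $\delta$ is indeed $(\emptyset,((1),1))$, which is made explicit in Section 2.
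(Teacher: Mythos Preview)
Your proposal is correct and is precisely the argument the paper has in mind: the paper itself gives no proof beyond the sentence ``This follows from Theorem 2.2,'' and what you have written is exactly the unpacking of that remark---identify the decomposition symbols $\alpha=(\delta-x,\emptyset)$, $\beta=(x,\emptyset)$, $\gamma=(\emptyset,((1),1))$, note that $S(\alpha,k)$ and $S(\beta,k)$ are singletons so the sum in Theorem 2.2 collapses, and conclude that the resulting number is independent of the choice of $C\in S(\gamma,k)$.
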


The next proposition shows that in order to calculate the Ringel-Hall numbers $F^{R^k_1(1,a)}_{I(\delta-x,k) P(x,k)}$ it is enough to compute Ringel-Hall numbers of the form $F^{R^k_1(1,a)}_{I S}$ over the specially oriented quivers $Q_i$, where $S$ is simple projective and $I$ is a corresponding preinjective indecomposable.

\begin{proposition} Let $Q$ be quiver of type $\tilde D_n,\tilde E_6, \tilde E_7,\tilde E_8$, $\delta$ its minimal radical vector and suppose that the base field $k$ has at least 3 elements (the condition needed to have points $a\in\mathbb H_k$ of degree 1). Consider the following indecomposables in $\Mod kQ$: a simple homogeneous regular module $R^k_1(1,a)$ where $a\in\mathbb H_k$ is of degree 1, an indecomposable preprojective $P(x,k)$ of defect $-d$ ($d>0$) and the indecomposable preinjective $I(\delta-x,k)$ of defect $d$.

Then there is a vertex $i$ in $Q$ independent from the field $k$  with $\delta_i=d$ such that $F^{R^k_1(1,a)}_{I(\delta-x,k)P(x,k)}=F^{R'}_{I'S'(i)}$, where $S'(i)$ is the simple module corresponding to the vertex $i$ in $\Mod kQ_i$ (so it is projective of defect $-d$), $R'$ is an arbitrary simple homogeneous regular module in $\Mod kQ_i$ with $\underline\dim R'=\delta$ and $I'$ is the (up to isomorphism) unique indecomposable preinjective of dimension $\delta-\underline\dim S'(i)$ (and so of defect $d$).

For Ringel-Hall polynomials this means that ${}^QF^{(\emptyset,((1),1))}_{(\delta-x,\emptyset) (x,\emptyset)}={}^{Q_i}F^{(\emptyset,((1),1))}_{(\delta-e_i,\emptyset) (e_i,\emptyset)},$ where the first polynomial is taken  over the quiver $Q$ and the second one over the quiver $Q_i$.
\end{proposition}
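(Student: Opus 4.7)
The plan is to transform the setup $(P(x,k), I(\delta-x,k), R^k_1(1,a))$ in $\Mod kQ$ into an analogous setup $(S'(i), I', R')$ in $\Mod kQ_i$ via a sequence of reflection functors at sinks, and then invoke Proposition 6.2(b) to equate the relevant Ringel-Hall numbers.

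The first step is to reduce $P(x,k)$ to a simple projective. Writing $P(x,k) = \tau^{-m}P(j)$ in $\Mod kQ$, one applies the Coxeter functor $C^+ = S^+_{i_n}\cdots S^+_{i_1}$ (with $i_1,\dots,i_n$ an admissible ordering of sinks) exactly $m$ times, reducing $P(x,k)$ to $P(j)$; at each intermediate step the module in question is not the simple at the current sink, so it is never killed. A further sequence of sink reflections avoiding $j$, organised in the spirit of Lemma 6.3, reorients $Q$ so that $j$ becomes a sink, at which stage $P(j)$ coincides with the simple at $j$. Renaming $j$ as $i$, we obtain a quiver $Q^*$ with $i$ a sink and the iterated reflection carrying $P(x,k)$ to $S^*(i)\in\Mod kQ^*$. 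By Proposition 6.2(a), $\partial S^*(i) = \partial P(x,k) = -d$; on the other hand, using that $\delta$ is in the radical of the symmetrised Euler form and that $Q$ is a tree (no multiple edges), the identity $\sum_{j\in N_i}\delta_j = 2\delta_i$ gives $\partial e_i = -\delta_i$ when $i$ is a sink. Hence $\delta_i = d$.

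During the same sequence of reflections, Proposition 6.2(c) carries $R^k_1(1,a)$ to some simple homogeneous regular $R^*\in\Mod kQ^*$ of dimension $\delta$ (which is fixed by each $s_j$ since $\delta$ is radical), and $I(\delta-x,k)$ is carried to an indecomposable preinjective $I^*$ of dimension $\delta - e_i$ (the preinjective is never simple projective at a sink, so is never killed). By Proposition 6.2(b), $F^{R^k_1(1,a)}_{I(\delta-x,k) P(x,k)} = F^{R^*}_{I^* S^*(i)}$. Next, applying Lemma 6.3 directly to $Q^*$ reorients it to $Q_i$ via sink reflections at vertices not equal to $i$ and not in $N_i$. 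These reflections fix $e_i$, so $S^*(i)$ passes to the simple projective $S'(i)\in\Mod kQ_i$; similarly they carry $R^*$ to some simple homogeneous regular $R'$ of dimension $\delta$ and $I^*$ to the unique preinjective $I'$ of dimension $\delta - e_i$. Proposition 6.2(b) again yields $F^{R^*}_{I^* S^*(i)} = F^{R'}_{I' S'(i)}$, and chaining the two identities produces $F^{R^k_1(1,a)}_{I(\delta-x,k)P(x,k)} = F^{R'}_{I' S'(i)}$, as required.

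For the polynomial identity, the value equality holds over every finite field $k$ with at least $3$ elements, and Lemma 6.4 (via Theorem 2.2) guarantees that both sides are polynomial evaluations independent of the choice of homogeneous point; hence the two Ringel-Hall polynomials ${}^Q F^{(\emptyset,((1),1))}_{(\delta-x,\emptyset)(x,\emptyset)}$ and ${}^{Q_i} F^{(\emptyset,((1),1))}_{(\delta-e_i,\emptyset)(e_i,\emptyset)}$ coincide on infinitely many prime powers, and therefore as rational polynomials. The main obstacle in this plan is the first step: formally describing the sink-reflection sequence reducing $P(x,k)$ to a simple projective without collapse along the way. This amounts to a careful tracking of the Coxeter orbit on the preprojective component of the AR quiver; since all the relevant data (dimension vectors, sink structure, orbits under $\tau$) is combinatorial and depends only on $Q$ and the real root $x$, the selected vertex $i$ is automatically independent of $k$, which gives the uniformity required by the statement.
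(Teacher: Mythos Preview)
Your proof is correct and follows essentially the same two-stage reflection strategy as the paper: first reduce $P(x,k)$ to a simple projective $S(i)$ at some sink $i$ via a sequence of sink reflections, then use Lemma~6.3 to reorient the resulting quiver to $Q_i$, invoking Proposition~6.2(b) and Lemma~6.4 at each stage. The paper simply cites \cite{dlabrin} for the existence of the first reflection sequence, whereas you spell it out via $m$ iterations of the Coxeter functor followed by a partial admissible sequence; your added computation $\partial e_i=-\delta_i$ from the radical identity $\sum_{l\in N_i}\delta_l=2\delta_i$ is a helpful detail the paper leaves implicit.

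One small imprecision worth tightening: when you write ``a further sequence of sink reflections avoiding $j$, organised in the spirit of Lemma~6.3, reorients $Q$ so that $j$ becomes a sink, at which stage $P(j)$ coincides with the simple at $j$'', note that Lemma~6.3 as stated assumes the distinguished vertex is already a sink, and an \emph{arbitrary} sequence of sink reflections off $j$ making $j$ a sink need not send $P(j)$ to $S(j)$. What you actually want here is the first segment $i_1,\dots,i_{r-1}$ of an admissible ordering with $i_r=j$; this is the standard partial-Coxeter argument (and is precisely what \cite{dlabrin} provides), and it does send $P(j)$ to the simple at $j$.
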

\begin{proof} It follows from \cite{dlabrin} that there exists field independently a sequence $i_1,...,i_t$ of vertices of $Q$ such that for each $s\in\{1,\dots,t\}$ the vertex $i_s$ is a sink in $\sigma_{i_{s-1}}\dots\sigma_{i_1}Q$ and $S^+_{i_{t}}\dots S^+_{i_1}P(x,k)=S''(i)\in\Mod kQ''$ is a simple projective  corresponding to the sink $i$ in $Q''=\sigma_{i_{t}}\dots\sigma_{i_1}Q$. Using Proposition 6.2 it follows that $\partial S''(i)=-d=\delta_i$ and also for $s\in\{1,\dots,t\}$ the modules $S^+_{i_{s}}\dots S^+_{i_1}I(\delta-x,k),S^+_{i_{t}}\dots S^+_{i_1}R^k_1(1,a)\neq 0$ are indecomposables. Moreover $\partial S^+_{i_{s}}\dots S^+_{i_1}I=d$, $\underline\dim S^+_{i_{s}}\dots S^+_{i_1}R^k_1(1,a)=\delta$ and $S^+_{i_{s}}\dots S^+_{i_1}R^k_1(1,a)$ is simple homogeneous regular. Using Proposition 6.2 and Lemma 6.4 it follows that $F^{R^k_1(1,a)}_{I(\delta-x,k)P(x,k)}=F^{S^+_{i_{t}}\dots S^+_{i_1}R^k_1(1,a)}_{S^+_{i_{s}}\dots S^+_{i_1}I(\delta-x,k)\ S''(i)}=F^{R''}_{S^+_{i_{s}}\dots S^+_{i_1}I(\delta-x,k)\ S''(i)}$, where $R''$ is an arbitrary simple homogeneous regular module in $\Mod kQ''$ with $\underline\dim R''=\delta$.

Applying Lemma 6.3 there is a sequence $j_1,...,j_r$ of vertices of $Q''$ different from $i$ and not in $N_i$ such that for each $s\in\{1,\dots,t\}$ the vertex $j_s$ is a sink in $\sigma_{j_{s-1}}\dots\sigma_{j_1}Q''$ and $\sigma_{j_{r}}\dots\sigma_{j_1}Q''=Q_i$. It follows that $S^+_{j_{r}}\dots S^+_{j_1}S''(i)=S'(i)$, which is the simple projective in $\Mod kQ_i$ corresponding to the vertex $i$ in $Q_i$. The statement now follows using Lemma 6.4 and the same argument as above.
\end{proof}

Using a computer program written in GAP (see \cite{GAP}) we have computed the special Ringel-Hall numbers from above.
The program computes the Ringel-Hall numbers over small finite fields and interpolates the Ringel-Hall polynomial (which exists in the sense of Theorem 2.2).
Due to the particular orientation of $Q_i$, the low dimensions and the symmetries, only a few cases occur and thus the computing time is very short. It takes around 15 minutes to obtain the polynomials from the following proposition.

We should also remark that using our program we could reproduce Ringel's list of Hall polynomials in the Dynkin case (see \cite{rin6}).
\begin{proposition} Consider the quiver $Q_i$ (having the unique sink $i$) obtained from the tame quiver $Q$ and let $\delta$ be the minimal radical vector. Denote by $e_i$ the dimension of the simple projective $S(i)$ corresponding to the vertex $i$ (its defect $\partial S(i)=-\delta_i$). We have:

If $\delta_i=1$, then  $f_1=F^{(\emptyset,((1),1))}_{(\delta-e_i,\emptyset) (e_i,\emptyset)}=1$.

If $\delta_i=2$, then  $f_2=F^{(\emptyset,((1),1))}_{(\delta-e_i,\emptyset) (e_i,\emptyset)}=q-3$.

If $\delta_i=3$, then  $f_3=F^{(\emptyset,((1),1))}_{(\delta-e_i,\emptyset) (e_i,\emptyset)}=q^2-5q+7$.

If $\delta_i=4$, then  $f_4=F^{(\emptyset,((1),1))}_{(\delta-e_i,\emptyset) (e_i,\emptyset)}=q^3-6q^2+15q-14$.

If $\delta_i=5$, then  $f_5=F^{(\emptyset,((1),1))}_{(\delta-e_i,\emptyset) (e_i,\emptyset)}=q^4-7q^3+22q^2-37q+26$.

If $\delta_i=6$, then  $f_6=F^{(\emptyset,((1),1))}_{(\delta-e_i,\emptyset) (e_i,\emptyset)}=q^5-7q^4+22q^3-45q^2+62q-39$.
\end{proposition}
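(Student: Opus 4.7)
The proposition is a finite list of explicit evaluations of Ringel--Hall polynomials, so my plan is to reduce it to a computer-algebra computation of bounded size and carry out the enumeration with GAP, exactly as announced in the paragraph preceding the statement. First I invoke Theorem 2.2: the Ringel--Hall number $F^{(\emptyset,((1),1))}_{(\delta-e_i,\emptyset)(e_i,\emptyset)}$ is, for the quiver $Q_i$ with its unique sink $i$, a polynomial in $q=|k|$ with rational coefficients; call it $f_{\delta_i}(q)$. To make the problem finite I need a degree bound. Since $i$ is a sink there are no arrows out of $i$, so every one-dimensional subspace $L\subseteq V_i$ is automatically a subrepresentation of any $R\in\Mod kQ_i$ and is isomorphic to $S(i)$. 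Hence the total number of submodules of $R$ isomorphic to $S(i)$ is exactly $(q^{\delta_i}-1)/(q-1)$, a polynomial in $q$ of degree $\delta_i-1$. Consequently $\deg f_{\delta_i}\leq \delta_i-1$, and $f_{\delta_i}$ is pinned down by its values at any $\delta_i$ distinct prime powers $q\geq 3$.

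Second, for each of $\tilde D_n,\tilde E_6,\tilde E_7,\tilde E_8$ and each relevant $\delta_i\in\{1,\ldots,6\}$ I fix the quiver $Q_i$, pick a convenient degree-1 point $a\in\mathbb H_{\mathbb F_q}$ (by Lemma 6.4 the count is independent of $a$), and write down explicit arrow matrices for $R=R^{\mathbb F_q}_1(1,a)$ over $\mathbb F_q$. The GAP script then loops over all $(q^{\delta_i}-1)/(q-1)$ one-dimensional subspaces $L\subseteq V_i$, forms the quotient $R/L$, and tests whether $R/L\cong I(\delta-e_i,\mathbb F_q)$ by checking $\dim_{\mathbb F_q}\End(R/L)=1$; since $\delta-e_i$ is a positive real root, a one-dimensional endomorphism ring forces the quotient to be indecomposable, hence isomorphic to the unique indecomposable with that dimension vector, namely $I$. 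Summing the passing lines yields $F^R_{I\,S(i)}(q)$; running the script at $\delta_i$ distinct small prime powers and Lagrange-interpolating produces $f_{\delta_i}$.

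The main obstacle is scale: for $\tilde E_8$ with $\delta_i=6$ the regular $R$ has total dimension $|\delta|=30$, and already at $q=7$ there are $(7^6-1)/6=19608$ lines to test, each requiring an $\End$-computation on a $29$-dimensional quotient. Three ingredients keep this tractable. The unique-sink orientation of $Q_i$ means the submodule generated by $L\subseteq V_i$ is $L$ itself, so no saturation is required and the quotient is obtained by trivial quotienting in the $i$-slot. The freedom to pick any degree-1 point $a$ lets one realize $R$ with arrow matrices equal to the identity along a spanning subtree of $Q_i$ and a single scalar matrix on the remaining edge, compressing the $\End$-test to a sparse linear-algebra problem. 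And the degree bound above restricts attention to a handful of small primes, so the computation terminates in roughly fifteen minutes, as reported. Lagrange-interpolating the tabulated values then delivers the six polynomials $f_1,\ldots,f_6$ in the statement, whose leading coefficient is in each case $1$, consistent with the $(q^{\delta_i}-1)/(q-1)$ upper bound used for the degree estimate.
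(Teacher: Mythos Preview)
This is essentially the paper's own argument: invoke Hubery's Theorem 2.2 for polynomiality, evaluate $F^{R}_{I\,S(i)}$ over a few small finite fields with GAP, and interpolate; you helpfully make explicit the a priori degree bound $\deg f_{\delta_i}\le\delta_i-1$ coming from the total line count $(q^{\delta_i}-1)/(q-1)$ in $V_i$, which the paper leaves tacit. One small slip in your efficiency remark: for types $\tilde D_n,\tilde E_6,\tilde E_7,\tilde E_8$ the quiver $Q_i$ is already a tree, so there is no ``remaining edge'' on which to place a scalar---the homogeneous parameter $a$ enters instead through the relative position of the (generally non-square) arrow maps---but this does not affect the correctness of your enumeration-and-interpolation scheme.
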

\begin{remark} In case $\delta_i=-\partial S(i)=1$ then $\dim_k\Hom(S(i),R')=1$ so one can trivially see that $f_1=1$.
\end{remark}

Concluding all the results above, we have:

\begin{corollary} Let $Q$ be quiver of type $\tilde D_n,\tilde E_6, \tilde E_7,\tilde E_8$ with minimal radical vector $\delta$ and $x$ be a positive real root with $-\partial x=\langle x,\delta\rangle=m>0$ (so $x$ is the dimension of a preprojective indecomposable). Then $F^{(\emptyset,((1),1))}_{(\delta-x,\emptyset) (x,\emptyset)}=f_m$.
\end{corollary}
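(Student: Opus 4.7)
The argument is a direct chain of Propositions 6.5 and 6.6. First, I would invoke Proposition 6.5 for the preprojective indecomposable $P(x,k)$ of defect $-m$. This produces a vertex $i$ of $Q$ (independent of the base field $k$) satisfying $\delta_i = m$, together with the equality of Ringel-Hall polynomials
$${}^Q F^{(\emptyset,((1),1))}_{(\delta-x,\emptyset)(x,\emptyset)} = {}^{Q_i} F^{(\emptyset,((1),1))}_{(\delta - e_i,\emptyset)(e_i,\emptyset)},$$
where the right-hand side is computed on the reoriented quiver $Q_i$ in which $i$ is the unique sink and the simple projective $S(i)$ at $i$ has dimension $e_i$ and defect $-\delta_i = -m$.

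Next, I would observe that for every tame quiver of type $\tilde D_n, \tilde E_6, \tilde E_7, \tilde E_8$ the coordinates of $\delta$ are bounded by $6$ (with the maximal value $6$ attained only at the central vertex of $\tilde E_8$; the other types admit at most $2$, $3$, and $4$, respectively). In particular $\delta_i = m \in \{1,2,3,4,5,6\}$, so Proposition 6.6 applies to the right-hand side and yields the value $f_{\delta_i} = f_m$. Combining the two equalities gives the claim.

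There is essentially no obstacle to overcome: the substantive work has already been done, first in the reflection-functor reduction of Section 6, which transfers the Ringel-Hall polynomial over $Q$ to one over $Q_i$ against a simple projective, and second in Proposition 6.6, whose six explicit polynomials exhaust every possible value of $\delta_i$ for the types in question. The one thing worth flagging is that the value $m = -\partial x$ coming from an \emph{arbitrary} preprojective real root really must equal $\delta_i$ for some vertex $i$; this is automatic from Proposition 6.5, since reflection functors preserve defect and the terminal simple projective $S(i)$ has defect $-\delta_i$, forcing $\delta_i = m$.
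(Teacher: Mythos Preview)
Your proposal is correct and follows exactly the route the paper intends: the paper offers no explicit proof at all, merely introducing the corollary with ``Concluding all the results above, we have'', so the content is precisely the combination of Proposition~6.5 (reduction via reflection functors to a simple projective at a sink $i$ with $\delta_i=m$) and Proposition~6.6 (the computed list of polynomials $f_1,\dots,f_6$). Your remark that the coordinates of $\delta$ for the types $\tilde D_n,\tilde E_6,\tilde E_7,\tilde E_8$ lie in $\{1,\dots,6\}$ is a useful sanity check that Proposition~6.6 indeed covers every case arising from Proposition~6.5.
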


\section{\bf The defect of GR submodules in a simple homogeneous regular module of dimension $\delta$}

As an application of the results from the previous sections, using Ringel's idea from \cite{rin2} we can prove the main result from \cite{chen1} in a more general setting:  the result is valid also for the case $\tilde E_8$ (this case is missing from \cite{chen1}) and the base field $k$ is arbitrary (in \cite{chen1} $k$ is algebraically closed).

Of course the field $k$ must have at least 3 elements in order to have a simple homogeneous regular module of dimension $\delta$.

\begin{theorem} Let $Q$ be a quiver of type $\tilde D_n,\tilde E_6, \tilde E_7,\tilde E_8$ with minimal radical vector $\delta$ and suppose that the base field $k$ has at least 3 elements (the condition needed to have simple homogeneous regulars of dimension $\delta$). Consider a simple homogeneous regular module $R$ of dimension $\delta$. If there is a GR inclusion $P\to R$ then $\partial P=-1$. (In case $Q$ is of type $\tilde A_n$ then every preprojective indecomposable has defect -1).
\end{theorem}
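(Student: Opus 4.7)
The plan is to apply Ringel's strategy: pit two independent expressions for the same Ringel-Hall number against each other and extract a severe arithmetic constraint on the defect. Writing $d := -\partial P > 0$, the short exact sequence $0 \to P \to R \to R/P \to 0$ has indecomposable cokernel (a standard feature of GR inclusions), and defect additivity gives $\partial(R/P) = d$, so $R/P$ is an indecomposable preinjective $I$ of defect $d$. In particular the submodule count $u^R_P$ coincides with $F^R_{IP}$, since $R/P$ is forced to be this preinjective (the only indecomposable of its dimension vector).

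From Proposition 5.4, the existence of the GR inclusion $P\to R$ forces a restricted shape on this Hall number: there is a field-independent integer $s$ with $0 \leq s < d$ such that
$$F^R_{IP} \;=\; \frac{q^{d}-q^{s}}{q-1}.$$
From the other side, Proposition 6.5 reduces the computation (via reflection functors) to the simple-projective case over the specially-oriented quiver $Q_i$, where by Corollary 6.8 we have
$$F^R_{IP} \;=\; f_d(q),$$
with $f_d$ the explicit polynomial listed in Proposition 6.6. Moreover $d$ must be of the form $\delta_i$ for some vertex $i$ (because $\partial P(i)=-\delta_i$ and the Auslander-Reiten translation preserves defect), and in types $\tilde D_n, \tilde E_6, \tilde E_7, \tilde E_8$ the entries $\delta_i$ lie in $\{1,2,3,4,5,6\}$. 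Equating the two expressions gives the polynomial identity $(q-1)\,f_d(q)=q^{d}-q^{s}$ with $d\in\{1,\dots,6\}$, i.e.\ a difference of exactly two powers of $q$.

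The proof then reduces to inspecting, for each admissible $d$, whether $(q-1)f_d(q)$ can in fact be a difference of two monomials. For $d=1$ we have $(q-1)\cdot 1 = q-q^{0}$, so $s=0$ and the identity holds. For $d\in\{2,3,4,5,6\}$, multiplying the polynomial $f_d$ tabulated in Proposition 6.6 by $q-1$ produces — in each of the five cases — a polynomial with strictly more than two nonzero coefficients (e.g.\ already $(q-1)(q-3)=q^2-4q+3$ for $d=2$), and therefore cannot equal $q^{d}-q^{s}$ for any $s$. Hence only $d=1$ survives, giving $\partial P = -1$. The parenthetical $\tilde A_n$ case is automatic since there every preprojective indecomposable has defect $-1$.

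The main obstacle is not a deep technicality but the correct bookkeeping surrounding the two expressions: verifying that $u^R_P$ really equals $F^R_{IP}$ (which needs indecomposability of the cokernel plus uniqueness of the preinjective of a given real-root dimension), and confirming that $d\le 6$ so the case analysis stays inside the table of Proposition 6.6. Both points are prepared by the earlier sections. What remains is a brief finite inspection which I would present as one line per value of $d$ rather than grind through in detail.
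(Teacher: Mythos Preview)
Your proposal is correct and follows essentially the same approach as the paper: invoke Proposition~5.4 to get the shape $\frac{q^{d}-q^{s}}{q-1}$ for the Ringel--Hall polynomial, invoke Corollary~6.8 (via the reflection-functor reduction of Proposition~6.5 and the table of Proposition~6.6) to identify it with $f_{d}$, and compare. You are slightly more explicit than the paper in spelling out why $d\le 6$ and in carrying out the case-by-case elimination of $d\in\{2,\dots,6\}$, but the argument is the same.
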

\begin{proof} Since $P$ is a preprojective indecomposable, we have $P=P(x,k)$, where $x$ is a positive real root with $\partial x<0$. Using Theorem 3.5 it follows that $P(x,k')\to R^{k'}_1(1,a')$ is a GR inclusion for any field $k'$ having at least 3 elements and any point $a'\in\mathbb H_{k'}$ of degree 1. By Proposition 3.4 this means that $F^{(\emptyset,((1),1))}_{(\delta-x,\emptyset) (x,\emptyset)}(q)=\frac{q^{-\partial x}-q^s}{q-1}$. But we also know from Corollary 4.7 that $F^{(\emptyset,((1),1))}_{(\delta-x,\emptyset) (x,\emptyset)}=f_{-\partial x}$. Comparing the Ringel-Hall polynomials one can see that $\partial x=-1$.
\end{proof}

The following is an immediate corollary of the theorem above (see \cite{chen1}):
\begin{corollary}  Let $Q$ be quiver of tame type with minimal radical vector $\delta$ and suppose that the base field $k$ has at least 3 elements in case $Q$ is not of type $\tilde A_n$. Consider a simple homogeneous regular module $R$ of dimension $\delta$ and $P\to R$ a GR inclusion. Then $R/P$ is a preinjective indecomposable of defect $1$ and $(R/P,P)$ is a Kronecker pair, i.e. $P/R,P$ are both exceptional (they are indecomposables without self extensions), $\Hom(R/P,P)=\Hom(P,R/P)=\Ext^1(P,R/P)=0$ and $\dim_k\Ext^1(R/P,P)=2$.
\end{corollary}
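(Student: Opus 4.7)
The plan is to treat the corollary as a bookkeeping consequence of Theorem 7.1, which gives $\partial P = -1$ in the cases $\tilde D_n, \tilde E_6, \tilde E_7, \tilde E_8$ (while for $\tilde A_n$ every preprojective indecomposable has defect $-1$ automatically). The ingredients I would combine are: additivity of defect on short exact sequences, indecomposability of the cokernel of a GR inclusion, the Hom/Ext vanishings in Lemma 2.1, and the Euler form identity $\langle \underline{\dim} X, \underline{\dim} Y \rangle = \dim_k \Hom(X,Y) - \dim_k \Ext^1(X,Y)$.

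First I would show that $R/P$ is a preinjective indecomposable of defect $1$. Indecomposability is automatic since the factor of a GR inclusion is indecomposable. Applying $\partial$ to the short exact sequence $0 \to P \to R \to R/P \to 0$ and using $\partial R = 0$, $\partial P = -1$ yields $\partial(R/P) = 1$; as a positive-defect indecomposable, $R/P$ must then be preinjective.

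Next I would check exceptionality. The dimension vectors $x = \underline{\dim} P$ and $\delta - x = \underline{\dim} R/P$ are positive real roots, so $\langle x, x \rangle = \langle \delta - x, \delta - x \rangle = 1$. Combined with $\dim_k \End(P) = \dim_k \End(R/P) = 1$ from Lemma 2.1 c), the Euler form identity gives $\Ext^1(P,P) = \Ext^1(R/P, R/P) = 0$. The vanishings $\Hom(R/P, P) = 0$ and $\Ext^1(P, R/P) = 0$ are read off directly from Lemma 2.1 a), since $R/P$ is preinjective and $P$ preprojective. To kill $\Hom(P, R/P)$ I would compute
$$\langle x, \delta - x \rangle = \langle x, \delta \rangle - \langle x, x \rangle = 1 - 1 = 0,$$
using $\langle x, \delta \rangle = -\partial P = 1$, which forces $\dim_k \Hom(P, R/P) = \dim_k \Ext^1(P, R/P) = 0$. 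Finally,
$$\langle \delta - x, x \rangle = \langle \delta, x \rangle - \langle x, x \rangle = -1 - 1 = -2,$$
combined with $\Hom(R/P, P) = 0$, produces $\dim_k \Ext^1(R/P, P) = 2$.

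There is no real obstacle: once Theorem 7.1 supplies $\partial P = -1$, the rest is a direct computation with the bilinear form and the Hom/Ext vanishings already recorded. The only subtlety worth flagging is the $\tilde A_n$ case, where Theorem 7.1 does not directly apply but is substituted by the parenthetical remark that every preprojective indecomposable has defect $-1$, so the same computation goes through verbatim.
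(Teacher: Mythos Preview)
Your proposal is correct and follows exactly the approach the paper intends: the corollary is stated without proof in the paper, merely flagged as ``an immediate corollary of the theorem above (see \cite{chen1})'', and your argument supplies precisely the routine bookkeeping with the Euler form and Lemma~2.1 that makes this immediacy explicit. There is nothing to add.
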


{\it Acknowledgements.} This work was supported by the Bolyai Scholarship of the Hungarian Academy of Sciences and Grant PN
II-RU-TE-2009-1-ID 303.

\end{document}